\title{Singular solutions for the constant $Q-$curvature problem}
\author{Ali Hyder\thanks{The author is supported by SNSF Grant No. P400P2-183866. } \, and Yannick Sire}
\date{}
\newtheorem{thm}{Theorem}[section]
\newtheorem{lem}[thm]{Lemma}
\newtheorem{prop}[thm]{Proposition}
\newtheorem{rem}{Remark}
\newenvironment{proof}{\noindent\emph{Proof.}}{\hfill$\square$\medskip}
\newcommand{\p}{\partial}
\newcommand{\N}{\mathbb{N}}
\newcommand{\D}{\Delta}
\newcommand{\vp}{\varphi}
\newcommand{\R}{\mathbb{R}}
\newcommand{\ve}{\varepsilon}
\newcommand{\bve}{{\bar\varepsilon}}
\newcommand{\LL}{\mathcal{L}}
\newcommand{\GG}{\mathcal{G}} 
\newcommand{\HH}{\mathbb{H}}
\begin{document}

\maketitle

\begin{abstract}
This paper is devoted to the construction of weak solutions to the singular constant $Q-$curvature problem. We build on several tools developed in the last years. This is the first construction of singular metrics on closed manifolds of sufficiently large dimension with constant (positive) $Q-$curvature.  
\end{abstract}

\tableofcontents

 \section{Introduction } 
 
 The last years have seen several important works on the $Q-$curvature problem in dimensions bigger or equal to five, since the discovery by Gursky and Malchiodi \cite{GM} of a natural {\sl geometric} maximum principle associated to the Paneitz operator. Building on this work, Hang and Yang \cite{HY1} realized that one could give some conformally covariant conditions under which such a maximum principle holds and provided an Aubin-type result for existence of constant $Q-$curvature metrics on closed manifolds. 
 
 The present paper is devoted to the construction of singular solutions to the constant $Q-$curvature problem for dimensions bigger or equal to five. Before explaining our results, we review the by-now classical setting of the Yamabe problem.  Let $(M,g)$ be a compact $n$-dimensional Riemannian manifold, $n \geq 3$. If $\Sigma \subset M$ is any
closed set, then the standard singular Yamabe problem concerns the existence and geometric properties of
complete metrics of the form $\bar g = u^{\frac{4}{n-2}} g$ with constant scalar curvature. This corresponds to solving
the partial differential equation defined on $M \backslash \Sigma$
\begin{equation}\label{eq:cL}
-\Delta_{g} u + \frac{n-2}{4(n-1)} R^{g} u = \frac{n-2}{4(n-1)}R^{\bar g} \, u^{\frac{n+2}{n-2} }, \qquad u > 0,
\end{equation}
where $R^{\bar g}$ is constant and such that $u(x) \to \infty$ sufficiently quick as $x$ approaches $\Sigma$ so that
$\bar g$ is complete. It is known
that solutions with $R^{\bar  g}< 0$ exist quite generally if $\Sigma$ is large in a capacitary
sense \cite{Lab}, whereas for $R^{\bar g} > 0$ existence is only known when $\Sigma$ is a smooth
submanifold (possibly with boundary) of dimension $k \leq  (n-2)/2$, see \cite{Mazzeo-Pacard96}, \cite{F}.

There are both analytic and geometric motivations for studying this problem. For example, in the positive case ($R > 0$), solutions
to this problem are actually weak solutions across the singular set, so these results fit into the broader investigation of
possibly singular sets of weak solutions of semilinear elliptic equations. On the geometric side is a well-known theorem
by Schoen and Yau \cite{SY} stating that if $(M,g)$ is a compact manifold with a locally conformally flat metric $g$ of
positive scalar curvature, then the developing map $D$ from the universal cover $\widetilde{M}$ to $\mathbb S^n$, which
by definition is conformal, is injective, and moreover, $\Sigma := \mathbb S^n \setminus D(\widetilde{M})$ has Hausdorff
dimension less than or equal to $(n-2)/2$. Regarding the lifted metric $\tilde{g}$ on $\widetilde{M}$, this provides an interesting class of solutions of the singular Yamabe problem which are periodic with respect to a
Kleinian group, and for which the singular set $\Sigma$ is typically nonrectifiable. More generally, that paper also
shows that if $g$ is the standard round metric on the sphere and if $\bar g = u^{\frac{4}{n-2}} g$ is a complete metric
with positive scalar curvature and bounded Ricci curvature on a domain $\mathbb S^n \setminus \Sigma$, then
$\dim_{\mathcal H} \Sigma \leq (n-2)/2$.

In this work, we address the same type of question for the $Q-$curvature equation. The equation involves a fourth order operator, the so-called Paneitz operator, is therefore significantly more challenging to investigate.  However, in the recent years, there has been several new insights on this difficult problem, thanks to the work of Gursky and Malchiodi  \cite{GM}. A major problem for considering higher order equations is the lack of maximum principle. In particular, in general, one cannot ensure that any reasonable approximation yielding to  a weak solution of the equation is actually {\sl positive}. However, the breakthrough of Gursky and Malchiodi ensures that under some {\sl geometric} assumptions on the manifold, one can ensure that one can obtain a positive solution. Their maximum principle led to an existence result similar to the Yamabe problem using a flow approach. A more variational point of view was later implemented by Hang and Yang in \cite{HY1}, weakening also the geometric conditions to have a maximum principle. Keeping in mind the importance of the $Q-$curvature problem both analytically and geometrically, it is then a natural question to ask wether one can construct singular solutions as for the second order case. This is the main result of this paper.

We first describe the setting of our contribution: let $(M,g)$ be a smooth closed $n$-dimensional  Riemannian  manifold with $n\geq 5$. The   $Q$-curvature $Q_g$ is given by \begin{align}  Q_g&=-\frac{1}{2(n-1)}\D R-\frac{2}{(n-2)^2}|Ric|^2+\frac{n^3-4n^2+16n-16}{8(n-1)^2(n-2)^2}R^2\notag\\ &=-\D J-2|A|^2+\frac n2J^2,\end{align} where $R$ is the scalar curvature, $Ric$ is the Ricci curvature tensor, and $$J=\frac{R}{2(n-1)},\quad A=\frac{1}{n-2}(Ric-Jg).$$ The Paneitz operator is given by \begin{align}P\vp&= \D^2\vp+\frac{4}{n-2}div(Ric(\nabla\vp,e_i)e_i)-\frac{n^2-4n+8}{2(n-1)(n-2)}div(R\nabla\vp)+\frac{n-4}{2}Q\vp\notag\\ &=\D^2\vp+div(4A(\nabla\vp,e_i)e_i-(n-2)J\nabla\vp)+\frac{n-4}{2}Q\vp.  \end{align} Here $e_1,\dots,e_n$ is an orthonormal frame with respect to $g$. 
 
 For a given closed sub-manifold $\Sigma$ of $M$, we are interested in finding weak solutions to \begin{align} \label{main-1}Pu=u^\frac{n+4}{n-4}\quad\text{in }M\setminus \Sigma,\end{align} such that $u$ goes to infinity as one approaches $\Sigma$, that is, for every $p\in\Sigma$ and $x_k\to p$ with $x_k\in M\setminus\Sigma $, $u(x_k)\to\infty$. 
 
 Our main result is the following.

\begin{thm}\label{main-theorem}
Let $\Sigma$ be a connected smooth closed  ( i.e. compact without boundary)  submanifold of $M$. Assume that $Q\geq 0$, $Q\not\equiv0$ and $R\geq0$.  If    $0<dim(\Sigma)=:k<\frac{n-4}{2}$ then there exists an infinite dimensional family of complete metrics on  $M \backslash \Sigma$ with constant $Q-$curvature.   \end{thm}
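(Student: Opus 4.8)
The plan is to follow the classical gluing strategy for singular solutions pioneered by Mazzeo–Pacard in the second-order case, adapted to the Paneitz operator. The starting point should be the observation that near $\Sigma$, which is $k$-dimensional, the model geometry is that of a cone $\mathbb{R}^k \times (C(\mathbb{S}^{n-k-1}))$, or after a conformal change, the cylinder $\mathbb{R}^k \times \mathbb{H}^{n-k}$, equivalently $\mathbb{R}^{k+1}\times \mathbb{S}^{n-k-1}$ in cylindrical coordinates. On this model one has explicit \emph{radial} (in the $\mathbb{R}^{n-k}$ factor) singular solutions of $Pu = u^{\frac{n+4}{n-4}}$: solutions of the form $u(x) = |y|^{-\frac{n-4}{2}} \psi(-\log|y|)$ where $y$ is the normal variable, with $\psi$ either constant (the "Fowler-type" exact solution coming from the fact that $\frac{n-4}{2} > k$ guarantees the subcritical decay condition) or a periodic "Delaunay-type" solution parametrized by a period/neck-size parameter $\varepsilon \in (0,\varepsilon_0]$. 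The infinite-dimensional family in the conclusion will come from allowing the Delaunay parameter to vary along $\Sigma$ together with a finite-dimensional set of deformation parameters, matched to a global perturbation.

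**The gluing construction, step by step.**

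First, I would build the approximate solution: take a smooth positive solution $u_0$ of $Pu_0 = u_0^{\frac{n+4}{n-4}}$ on $M$ (or more carefully a subsolution/supersolution pair, using the Gursky–Malchiodi / Hang–Yang maximum principle which is available precisely because $Q\ge 0$, $Q\not\equiv 0$, $R\ge 0$ — these are exactly the hypotheses of Theorem \ref{main-theorem}), and graft onto it, in a tubular neighborhood of $\Sigma$, the Delaunay-type singular solution of the model problem, interpolating with cutoffs in the region where $|y|\sim 1$. Second, I would set up the linearized operator about this approximate solution and invert it: this requires a careful Fredholm theory for the Paneitz operator on weighted (Melrose-type $b$- or edge-) Hölder or Sobolev spaces on $M\setminus\Sigma$, analyzing the indicial roots at $\Sigma$ to choose weights avoiding the indicial set, and crucially identifying the (finite-dimensional) cokernel coming from the Jacobi fields of the Delaunay solutions and the geometry of $\Sigma$. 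Third, I would solve the nonlinear problem by a fixed-point/Lyapunov–Schmidt argument: the projection onto the cokernel is killed by adjusting the Delaunay parameter function $\varepsilon(\cdot)$ on $\Sigma$ and the normal displacement of $\Sigma$, yielding a genuine solution $u$; completeness of $u^{\frac{4}{n-4}}g$ near $\Sigma$ follows from the $|y|^{-\frac{n-4}{2}}$ blow-up rate since $k < \frac{n-4}{2}$. Finally, the freedom in choosing the function $\varepsilon : \Sigma \to (0,\varepsilon_0]$ (plus possibly phase parameters) produces the asserted \emph{infinite-dimensional} family.

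**The main obstacle.**

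The hard part will be the linear analysis, for two intertwined reasons specific to the fourth-order setting. First, there is no maximum principle for $P$ in general, so one cannot run the sub/supersolution iteration directly on $M\setminus\Sigma$; one must leverage the \emph{geometric} positivity of Gursky–Malchiodi–Hang–Yang (the positivity of the Paneitz operator on appropriate function spaces under the curvature assumptions $Q\ge 0$, $R\ge 0$) to get coercivity and invertibility of the linearization, and check that this positivity survives the grafting in the tube near $\Sigma$. Second, the indicial root analysis at the $k$-dimensional edge $\Sigma$ is genuinely heavier than in the second-order case: the indicial operator is a fourth-order ODE in $|y|$ with an extra Laplacian in the $\mathbb{S}^{n-k-1}$ and $\Sigma$ directions, so one must enumerate its roots, verify that $-\frac{n-4}{2}$ lies in the expected gap (this is where $k < \frac{n-4}{2}$ enters decisively), and control the Jacobi fields of the Delaunay solution to pin down the cokernel dimension. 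Managing the interaction between the model (edge) estimates and the global Fredholm theory — i.e., gluing the parametrices — is where most of the technical work lies.
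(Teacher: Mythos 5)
Your overall framework -- Fermi coordinates and Delaunay-type profiles grafted into a tube around $\Sigma$, weighted edge-H\"older spaces, indicial root analysis, the condition $k<\frac{n-4}{2}$ entering through the normal-fibre model $\mathbb{S}^{n-k-1}\times\mathbb{H}^{k+1}$ (note the small slip: the model is $\mathbb{S}^{n-k-1}\times\mathbb{H}^{k+1}$, not $\mathbb{R}^k\times\mathbb{H}^{n-k}$), completeness from the $|y|^{-\frac{n-4}{2}}$ blow-up rate, and the Gursky--Malchiodi/Hang--Yang positivity handling the final positivity of the solution -- matches the paper's strategy of reducing Theorem~\ref{main-theorem} to the flat-domain Theorem~\ref{main2-theorem} plus a positivity check. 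However, your linear analysis follows the ``classical'' Mazzeo--Pacard route, whereas the paper deliberately avoids it. You propose a Lyapunov--Schmidt reduction in which a finite-dimensional cokernel coming from Jacobi fields of the Delaunay solution is identified and then killed by modulating the Delaunay parameter function $\varepsilon(\cdot)$ and the normal displacement of $\Sigma$. The paper instead follows the ``ODE-free'' method of \cite{Ao1}: the weight $\nu$ is chosen in the explicit interval $(-\frac{4}{p-1},\Re(\gamma_0^{--}))$, and $L_{\bar\varepsilon}=\Delta^2-p\bar u_{\bar\varepsilon}^{p-1}$ is shown to be \emph{directly surjective} on $C^{4,\alpha}_{\nu,\mathscr{N}}$ with a uniformly bounded right inverse $G_{\bar\varepsilon}$ (Section~6), so the nonlinear equation is solved by a plain contraction mapping with no cokernel projection. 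Surjectivity is established by duality from injectivity on $\mathbb{R}^N\setminus\{0\}$ and $\mathbb{R}^n\setminus\mathbb{R}^k$; the latter is reduced to the former by comparing the Fourier symbol of the intertwined conformal Laplacian on $\mathbb{R}\times\mathbb{S}^{N-1}$ with the Fourier--Helgason symbol on $\mathbb{S}^{N-1}\times\mathbb{H}^{k+1}$, which is exactly the trick that lets them bypass the fourth-order Jacobi-field ODE analysis you flag as the main obstacle. Two further deviations worth noting: the paper's approximate solution is just the cutoff Delaunay profiles supported near $\Sigma$ (no background smooth $Q$-curvature solution $u_0$ is grafted onto), and the maximum principle used in the Fredholm/fixed-point part is a bespoke one for $\Delta^2-p\bar u_{\bar\varepsilon}^{p-1}$ on the punctured domain (Lemma~5.1), obtained by normalizing $u_1$ so that $pr^4u_1^{p-1}(r)\le\delta$, rather than the geometric Paneitz positivity -- the curvature assumptions $Q\ge 0$, $Q\not\equiv 0$, $R\ge 0$ enter only at the very end to guarantee the constructed solution is positive. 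Both routes appear viable; yours would require carrying out the heavier Jacobi-field ODE analysis that the paper was designed to sidestep, while the paper's pays for its elegance with the symbol computation and the parametrix/polyhomogeneity argument needed to show the blown-up limit is independent of the $\Sigma$-variable.
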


Actually, Theorem \ref{main-theorem} is a corollary of the following result.

\begin{thm}\label{main2-theorem}
Let  $\Omega \subset \R^n$ be a smooth open set and $\Sigma= \cup_{i=1}^K \Sigma_i$ a disjoint union of smooth, closed submanifolds of dimensions $k_i$ in $\Omega$. Assume that $n$ and $k_i$ for $i=1,..., K$ satisfy
$$
\frac{n-k_i}{n-k_i-4}<p <  \frac{n-k_i+4}{n-k_i-4}
$$
or equivalently 
$$
n-\frac{4p+4}{p-1}<k_i<n-\frac{4p}{p-1}. 
$$
Then there exists a positive weak solution to 
\begin{align} \label{eq-domain} \left\{\begin{array}{ll} \D^2u=u^p &\quad\text{in }\Omega \backslash \Sigma \\  u=\D u=0&\quad\text{on }\partial\Omega\\u>0&\quad\text{in }\Omega,\end{array}  \right.\end{align}
that blows up exactly at $\Sigma$. Furthermore if at least one of the $k_i >0$, then the solution space  for \eqref{eq-domain} is  infinite dimensional.
\end{thm}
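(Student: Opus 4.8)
\textbf{Proof strategy for Theorem \ref{main2-theorem}.}

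The plan is to build the singular solution by a gluing/matching argument between an explicit singular model near $\Sigma$ and a regular piece away from it, following the philosophy of Mazzeo--Pacard for the second-order problem but in the polyharmonic setting. First I would treat a single submanifold $\Sigma_i$ of dimension $k=k_i$ and reduce, via Fermi coordinates in a tubular neighborhood, to the model geometry $\R^k \times (\R^{n-k}\setminus\{0\})$. On this model the equation $\D^2 u = u^p$ admits an explicit radial-in-the-normal-variable solution of the form $u_{\mathrm{sing}}(x) = c_{n,k,p}\, |y|^{-\frac{4}{p-1}}$, where $y$ is the normal coordinate; the condition $\frac{n-k}{n-k-4} < p < \frac{n-k+4}{n-k-4}$ is exactly what makes the scaling exponent $\frac{4}{p-1}$ lie strictly between $n-k-4$ (so that $|y|^{-4/(p-1)}$ is more singular than the fundamental solution $|y|^{-(n-k-4)}$ of $\D^2$ on $\R^{n-k}$, i.e. genuinely blows up at $\Sigma$) and $n-k$ (so that $u_{\mathrm{sing}}$, and indeed $u_{\mathrm{sing}}^p$, is locally integrable across $\Sigma$, which is what is needed for a \emph{weak} solution and is consistent with the equivalent bounds on $k_i$). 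The key linear ingredient is a sharp analysis of the linearized operator $\D^2 - p\, u_{\mathrm{sing}}^{p-1}$ on weighted spaces adapted to the cylinder $\R^k\times(\text{cone over }S^{n-k-1})$: one needs to identify the indicial roots, show surgical invertibility on the complement of the (finitely many) low-lying modes, and exploit the fact that $p\,u_{\mathrm{sing}}^{p-1} = c'|y|^{-4}$ is a Hardy-type potential with subcritical constant in the admissible range of $p$, so that a fourth-order Hardy--Rellich inequality guarantees coercivity.

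The construction then proceeds in the following steps. (1) Fix small parameters: a dilation/boundary-layer parameter $\ve$ and, for each $i$, boundary data along $\Sigma_i$ (a function on $\Sigma_i$ and, because the operator is fourth order, its conormal analogue), which together will index the infinite-dimensional family. (2) Produce an approximate solution $\bar u$ by taking $u_{\mathrm{sing}}$ truncated near each $\Sigma_i$ and interpolated — using cutoff functions — with a fixed regular background solution $u_0$ of the Navier boundary value problem $\D^2 u_0 = u_0^p$ in $\Omega$, $u_0 = \D u_0 = 0$ on $\p\Omega$, $u_0>0$; existence and positivity of such a $u_0$ should be arranged from the subcriticality/variational structure or simply taken as a starting smooth solution in $\Omega$ (alternatively one starts from $u_0 \equiv$ any fixed positive supersolution). (3) Compute the error $\D^2\bar u - \bar u^p$, which is supported in the gluing annuli and is small in a suitable weighted norm once the truncation scales are chosen as powers of $\ve$. (4) Set up the fixed-point problem $u = \bar u + v$ and solve $\D^2 v - p\,\bar u^{p-1} v = \mathcal{E}(\bar u) + \mathcal{Q}(v)$ by a contraction mapping in the weighted Hölder (or weighted Sobolev) space, using the right inverse from the linear analysis together with the standard decomposition that handles the finite-dimensional cokernel by absorbing it into the boundary parameters (a Lyapunov--Schmidt reduction). (5) Verify $u = \bar u + v > 0$ and that $u \to \infty$ exactly on $\Sigma$; positivity is delicate because there is no maximum principle for $\D^2$, but here it is inherited from the fact that $u_{\mathrm{sing}}$ dominates $|v|$ near $\Sigma$ and $u_0>0$ away from it, with the gluing scales chosen to preserve the sign. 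Finally, (6) since for $k_i>0$ the boundary parameters live in the infinite-dimensional space $C^{m,\alpha}(\Sigma_i)$, the solution set is infinite-dimensional; the case $\Sigma = \cup \Sigma_i$ follows by carrying out the gluing independently in disjoint tubular neighborhoods.

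\textbf{Main obstacle.} The hardest part will be the linear theory for the fourth-order linearized operator on the singular model and the construction of a right inverse with uniform bounds as the gluing parameter degenerates: fourth-order operators have two-sided indicial behavior, the absence of a maximum principle forces one to work entirely with weighted $L^2$/Hölder estimates and the fourth-order Hardy--Rellich inequality to get coercivity, and one must track carefully that the subcritical Hardy constant (guaranteed precisely by the stated range of $p$) keeps the operator invertible on the relevant function spaces. A secondary but real difficulty is propagating positivity through the perturbation argument without a comparison principle; this is handled by quantitative pointwise control $|v| \le \tfrac12 \bar u$ in the weighted norms rather than by any sign-preserving property of $\D^2$.
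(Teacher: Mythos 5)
Your overall strategy (Fermi coordinates, weighted spaces, indicial-root analysis, approximate solution plus perturbation, fixed point, and an infinite-dimensional family of boundary data along $\Sigma_i$ when $k_i>0$) is indeed the right framework and matches the paper in spirit, but there are several points at which your concrete plan would fail.

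\textbf{Wrong singular profile.} You propose building the approximate solution from the pure power $u_{\mathrm{sing}}(y)=c\,|y|^{-\frac{4}{p-1}}$. This is the one thing the paper explicitly rules out: such a profile is exactly scale-invariant, so dilations give you no small parameter, and a cutoff of $u_{\mathrm{sing}}$ at a scale $\ve^a$ produces a gluing error that does \emph{not} tend to zero. Likewise, matching $u_{\mathrm{sing}}$ with a bounded ``background'' $u_0$ in a transition annulus cannot work, since $u_{\mathrm{sing}}$ remains $O(\ve^{-4a/(p-1)})$ there while $u_0$ is $O(1)$. This is precisely the reason the trivial profile only gives \emph{local} singular solutions (a point the paper makes, citing the Ayed--Brezis-type result): to get a global solution on the bounded domain $\Omega$ you need a singular radial solution on $\R^N\setminus\{0\}$, $N=n-k$, with \emph{two different rates}: $u_1(r)\sim c_p r^{-4/(p-1)}$ as $r\to 0$ and $u_1(r)\sim \beta r^{4-N}$ as $r\to\infty$ (the Delaunay-type solution of Theorem~\ref{exists1}). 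Then $u_\ve(y)=\ve^{-4/(p-1)}u_1(y/\ve)$ behaves like $\ve^{\,N-4-4/(p-1)}|y|^{4-N}$ at fixed distance from $\Sigma$, and since $p>\frac{N}{N-4}$ the exponent $N-4-\frac{4}{p-1}$ is positive. So cutting $u_\ve$ off at a \emph{fixed} scale produces an error that vanishes as $\ve\to 0$, and the background solution $u_0$ becomes unnecessary (the paper takes $\bar u_{\bar\ve}=\sum_i \chi_R\, u_{\ve_i}$ with no extra piece). In addition, in your discussion of why the trivial profile is ``singular enough,'' the inequality is backwards: the conditions on $p$ place $\frac{4}{p-1}$ in $\bigl(\frac{N-4}{2},\,N-4\bigr)$, i.e. \emph{less} singular than the fundamental solution $|y|^{4-N}$ — which is fine, since blowup only needs a positive exponent — rather than in $(N-4,N)$ as you claim.

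\textbf{Maximum principle and positivity.} You state that there is no maximum principle for $\D^2$ and propose to get positivity solely from $|v|\le\frac12\bar u$. In fact the paper establishes and uses a genuine maximum principle (Lemma in Section~5): with Navier boundary conditions $u=\D u=0$ the biharmonic operator factors as a system of two second-order operators, and a Hardy-type iteration shows that $L_{\bar\ve}=\D^2-p\bar u_{\bar\ve}^{\,p-1}$ satisfies $L_{\bar\ve}w\ge 0$, $w\ge 0$, $\D w\le 0$ on $\partial\Omega_{\bar\ve}$ $\Rightarrow$ $w\ge 0$, $\D w\le 0$, after normalizing $u_1$ so that $p r^4 u_1^{p-1}(r)$ is small for $r\ge 1$. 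This is used crucially both for the a priori estimates (Lemma~\ref{7.2}), for injectivity, and for positivity of $\bar u_{\bar\ve}+v$. Dismissing it and relying on coercivity via Hardy--Rellich misses a key structural tool of the argument; this also matters because you would otherwise need a separate argument for the existence of a \emph{positive} background $u_0$, which is not free without a comparison principle.

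\textbf{Lyapunov--Schmidt versus direct invertibility.} You propose to absorb a finite-dimensional cokernel by a Lyapunov--Schmidt reduction. The paper shows instead that, for $\mu$ chosen between the indicial roots as in \eqref{mu-nu}, the operator $L_{\bar\ve}$ on $C^{4,\alpha}_{\mu,\mathscr N}$ is injective (via the cylinder decomposition and the Fourier--Helgason symbol computation, Propositions~\ref{inj-prop-1}--\ref{inj-prop-4}), and then by an adjoint/duality argument in weighted $L^2$ (Section~6) it is surjective with a right inverse whose norm is uniform in $\bar\ve$. No projection onto a finite-dimensional obstruction space is needed. This is a genuine simplification relative to your plan, and it relies on choosing the weight $\mu$ to lie strictly between the indicial roots, which sidesteps the cokernel. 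Your mention of ``the complement of the (finitely many) low-lying modes'' points at the right phenomenon, but the mechanism used in the paper is different and cleaner.

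In short: the gluing/fixed-point skeleton is correct, but (i) the building block must be the two-rate Delaunay solution, not the scale-invariant power, otherwise the error does not become small; (ii) there is no need for a regular background $u_0$; (iii) the Navier maximum principle is available and is a load-bearing part of both the linear estimates and the positivity; and (iv) the linear problem is solved by choosing weights between indicial roots and a duality argument, not by Lyapunov--Schmidt.
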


\begin{rem}
Notice that in the previous theorem if $k_i=0$ for all $i=1,...,K $, then the exponent $p$ has to be subcritical with respect to the Sobolev exponent and supercritical with respect to the Serrin's exponent provided $n \geq 5$. On the other hand, if one of the $k_i$ is positive, then the critical exponent $\frac{n+4}{n-4}$ is allowed for $p$, but the dimension $n$ has to be large enough. 

\end{rem}

The proof of Theorem \ref{main-theorem} uses several tools ranging from geometric theory of edge operators (as in \cite{Mazzeo-Pacard96}) to a more general view point on this type of problem provided in \cite{Ao1}. Since we are dealing with a fourth order equation, even the ODE analysis, which is instrumental in \cite{Mazzeo-Pacard96}, is rather involved. On the other hand, the authors in \cite{Ao1} had to develop ODE-free method to deal with their quite general operators. Using the model  $\mathbb R^n \backslash \mathbb R^k$ which is conformally equivalent to the product $\mathbb S^{n-k-1} \times \mathbb H^{k+1}$ with the canonical metric, a straightforward computation of the $Q-$curvature on this model provides the condition $0 < k <\frac{n-4}{2}$ for positive $Q-$curvature metrics (see e.g. \cite{BPS}). This model plays a crucial role in our theory since it allows to by-pass some tricky ODE arguments by having an``explicit" form of the solution using the Fourier-Helgason transform on hyperbolic space. See \cite{CHY} for much deeper results related to the dimension restriction and \cite{BPS} for multiplicity results on the Q-curvature problem. Finally, as in the second order case (Yamabe problem), we use Delaunay-type solutions as building blocks of the approximate solution (see \cite{GWZ} and the Appendix for some existence results on these solutions). Note also that \cite{AB} provides also singular solutions using the trivial profile  $|x|^{-\frac{4}{p-1}}$ but those allow to build only local solutions (see e.g. \cite{Mazzeo-Smale} for the Yamabe case).

\section{Preliminaries}

\subsection{Function spaces}

Let $\Sigma$ be a smooth $k$ dimensional submanifold of $\Omega \subset \R^n$ (or a union of submanifolds with different dimensions). For $\sigma>0$ small we let $N_\sigma$ to be the geodesic tubular neighborhood of radius $\sigma$ around $\Sigma$.  For $\alpha\in(0,1)$, $s\in (0,\sigma)$, $k\in \N\cup\{0\}$ and $\nu\in\R$ we define the seminorms \begin{align} \notag|w|_{k,\alpha,s}:=\sum_{j=0}^k s^j\sup_{N_s\setminus N_\frac s2}|\nabla ^jw| +s^{k+\alpha}\sup_{x,x'\in N_s\setminus N_\frac s2} \frac{|\nabla^kw(x)-\nabla^kw(x')|}{|x-x'|^\alpha}, \end{align} and  the weighted H\"older norm\begin{align} \notag \|w\|_{C^{k,\alpha}_\nu}  :=|w|_{C^{k,\alpha}(\bar\Omega\setminus N_\frac\sigma2)} +\sup_{0<s<\sigma}s^{-\nu}|w|_{k,\alpha,s}.\end{align} The  weighted H\"older  space $C^{k,\alpha}_\nu(\Omega\setminus\Sigma)$ is defined by \begin{align} \notag C^{k,\alpha}_\nu(\Omega\setminus\Sigma):=\left\{w\in C^{k,\alpha}_{loc}(\bar \Omega\setminus\Sigma): \|w\|_{C^{k,\alpha}_\nu} <\infty \right\}. \end{align}  The subspace of $C^{k,\alpha}_\nu(\Omega\setminus\Sigma)$ with Navier boundary conditions will be denoted by $$C^{k,\alpha}_{\nu,\mathscr{N}}(\Omega\setminus\Sigma):=\{w\in C^{k,\alpha}_\nu(\Omega\setminus\Sigma):w=\D w=0\text{ on }\partial\Omega\}.$$
The space $C^{k,\alpha}_{\nu,\nu'}(\R^N\setminus \{0\})$ is defined by  \begin{align}\notag \|w\|_{C^{k,\alpha}_{\nu,\nu'}(\R^N\setminus \{0\})}:=\|w\|_{C^{k,\alpha}_{\nu}(B_2\setminus \{0\})} +\sup _{r\geq 1}(r^{-\nu'} \|w(r\cdot)\|_{C^{k,\alpha}(\bar B_2\setminus B_1)}).\end{align} We also set  \begin{align}\notag  \|w\|_{C^{k,\alpha}_{\nu,\nu'}(\R^n\setminus \R^m)}:=\|w\|_{C^{k,\alpha}_{\nu}(\mathcal{B}_2\setminus \R^m)} +\sup _{r\geq 1}(r^{-\nu'} \|w(r\cdot)\|_{C^{k,\alpha}(\bar{ \mathcal{B}}_2\setminus \bar{\mathcal{B}}_1) },\end{align}   where $\mathcal{B}_r$ denotes the tubular neighborhood of radius $r$ of $\R^m$ in $\R^n$. \\

We now list some useful properties of the space $C^{k,\alpha}_{\nu}(\Omega\setminus\Sigma)$, see e.g. \cite{Mazzeo-Pacard96} and the book \cite{Pacard-Riviere}.

\begin{lem} \label{lem-properties}The following properties hold: \begin{itemize} \item[i)] If $w\in C^{k+1,\alpha}_{\gamma}(\Omega\setminus\Sigma)$ then   $\nabla w\in C^{k,\alpha}_{\gamma-1}(\Omega\setminus\Sigma)$. 
\item[ii)] If  $w\in C^{k+1,0}_{\gamma}(\Omega\setminus\Sigma)$ then   $ w\in C^{k,\alpha}_{\gamma}(\Omega\setminus\Sigma)$ for every $\alpha\in[0,1)$. 
\item[iii)]  For every $w_i\in C^{k,\alpha}_{\gamma_i}(\Omega\setminus\Sigma)$,\, i=1,2, we have 
 $$\|w_1 w_2\|_{k, \gamma_1+\gamma_2,\alpha}\leq C \|w_1 \|_{k, \gamma_1,\alpha} \|w_2\|_{k, \gamma_2,\alpha},$$ for some $C>0$ independent of $w_1,w_2$.
\item[iv)] There exists $C>0$ such that for every   $w\in C^{k,\alpha}_{\gamma}(\Omega\setminus\Sigma)$   with $w>0$ in $\bar \Omega\setminus \Sigma$ we have  $$\|w^p\|_{k, \gamma,\alpha}\leq C \|w \|^p_{k, \gamma,\alpha} .$$

 \end{itemize}  \end{lem}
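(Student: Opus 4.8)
\textbf{Proof strategy for Lemma \ref{lem-properties}.} All four statements are local near $\Sigma$ and trivial away from it, since on $\bar\Omega\setminus N_{\sigma/2}$ the weighted norm reduces to the ordinary $C^{k,\alpha}$ norm, which already has all these properties. So the plan is to work inside a fixed tubular piece $N_s\setminus N_{s/2}$ and track how the weight powers $s^j$, $s^{k+\alpha}$ interact with differentiation, products and powers. For (i) the idea is simply that $\nabla$ shifts the order of differentiation by one: a bound of the form $s^{j}\sup_{N_s\setminus N_{s/2}}|\nabla^j w|\le C s^\gamma$ for $j\le k+1$ is exactly the bound $s^{j-1}\sup|\nabla^{j-1}(\nabla w)|\le Cs^{\gamma-1}\cdot s$, and likewise the Hölder term at level $k$ for $\nabla w$ is the Hölder term at level $k+1$ for $w$; summing over $j=0,\dots,k$ gives $|\nabla w|_{k,\alpha,s}\le C s^{\gamma-1}$, i.e. $\nabla w\in C^{k,\alpha}_{\gamma-1}$. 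For (ii) I would use that on the annular region $N_s\setminus N_{s/2}$, which has diameter comparable to $s$, a $C^{k+1,0}$ bound controls the $C^{k,\alpha}$ Hölder seminorm by interpolation: $|\nabla^k w(x)-\nabla^k w(x')|\le |x-x'|\sup|\nabla^{k+1}w|$, and since $|x-x'|\le Cs$ we get $|x-x'|^{1-\alpha}\le Cs^{1-\alpha}$, so $s^{k+\alpha}|\nabla^k w|_{C^{0,\alpha}}\le C s^{k+1}\sup|\nabla^{k+1}w|$, which is controlled by the $C^{k+1,0}_\gamma$ norm; the lower-order terms are immediate.

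For (iii), the key point is the Leibniz rule: $\nabla^j(w_1w_2)=\sum_{l=0}^j\binom{j}{l}\nabla^l w_1\,\nabla^{j-l}w_2$, so $s^j|\nabla^j(w_1w_2)|\le C\sum_{l}(s^l|\nabla^l w_1|)(s^{j-l}|\nabla^{j-l}w_2|)$, and taking suprema over $N_s\setminus N_{s/2}$ gives a bound by $\bigl(\sum_l s^l\sup|\nabla^l w_1|\bigr)\bigl(\sum_m s^m\sup|\nabla^m w_2|\bigr)$; multiplying the whole inequality by $s^{-(\gamma_1+\gamma_2)}=s^{-\gamma_1}s^{-\gamma_2}$ realizes the product of the two weighted seminorms. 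The Hölder term at top order is handled the same way, writing the difference $\nabla^k(w_1w_2)(x)-\nabla^k(w_1w_2)(x')$ as a telescoping sum in which each summand is either (difference of a derivative of $w_1$) times (a derivative of $w_2$ at $x$) or (a derivative of $w_1$ at $x'$) times (difference of a derivative of $w_2$), and bounding each piece by the appropriate product of a Hölder seminorm of one factor and a sup of the other — both of which are dominated by the respective weighted seminorms, because $\sup_{N_s\setminus N_{s/2}}|\nabla^l w_i|\le s^{-l}|w_i|_{k,\alpha,s}$ for $l\le k$. Taking the supremum over $0<s<\sigma$ and combining with the trivial estimate on $\bar\Omega\setminus N_{\sigma/2}$ yields the stated multiplicative bound; I will absorb the combinatorial constants $\binom{j}{l}$ into $C$.

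Statement (iv) is the one point requiring a little care, because $p$ need not be an integer, so ``$\nabla^j(w^p)$'' is not a polynomial in the derivatives of $w$ but involves negative powers of $w$. The plan is to apply the Faà di Bruno / chain-rule formula: $\nabla^j(w^p)$ is a finite sum of terms of the form $c\, w^{p-m}\prod_{i}\nabla^{a_i}w$ with $a_i\ge 1$, $\sum_i a_i=j$ and $m$ equal to the number of factors, so $1\le m\le j$. Assigning the weight, each such term satisfies $s^j\,|w^{p-m}\prod\nabla^{a_i}w|\le |w|^{p-m}\prod_i (s^{a_i}|\nabla^{a_i}w|)$; here the crucial observation is that the hypothesis $w>0$ on $\bar\Omega\setminus\Sigma$ is used \emph{only} to make $w^{p-m}$ well-defined, and one should not expect $|w^{p-m}|$ to be controlled by $\|w\|_{C^{k,\alpha}_\gamma}^{p-m}$ when $p-m<0$. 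The correct bookkeeping, as in \cite{Mazzeo-Pacard96}, is to note that each factor $\nabla^{a_i}w$ contributes a copy of $w$'s size at scale $s$ — more precisely one writes $\nabla^{a_i}w=w\cdot(\nabla^{a_i}w/w)$ is not quite right either, so instead I would invoke directly the standard estimate that $C^{k,\alpha}_\gamma$ is closed under $w\mapsto w^p$ for $w>0$ with $\|w^p\|\le C\|w\|^p$, which is proved by the same Faà di Bruno expansion together with the fact that for a positive function the quotients $w^{p-m}\prod\nabla^{a_i}w$ scale exactly like $w^p$ does, i.e. like (size of $w$)$^p$ at each scale, once one uses that all the $a_i\ge1$ absorb exactly $j$ powers of $s$ and there are $m$ factors with one $w^{p-m}$, for a total homogeneity $p$ in $w$. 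The Hölder term at order $k$ is treated by the same telescoping-sum device as in (iii), now applied to the finitely many Faà di Bruno summands and using that $t\mapsto t^{p-m}$ is locally Lipschitz on compact subsets of $(0,\infty)$.

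\textbf{Main obstacle.} The only genuinely delicate point is (iv): for non-integer (or small) $p$ the naive expansion produces negative powers of $w$, and since $w$ may vanish on $\partial\Omega$ or decay toward $\Sigma$, one must be careful that the weighted norm of $w^p$ is genuinely $p$-homogeneous in $w$ and not spoiled by these negative powers. The resolution is purely bookkeeping — counting, in each Faà di Bruno term, that the number of $s$-weights used equals the total differentiation order while the total power of $w$ (counting the $w^{p-m}$ prefactor and the $m$ differentiated factors) is exactly $p$ — but it is the step that must be written out with care; (i), (ii) and (iii) are routine applications of $\nabla$ shifting the weight, interpolation on an $s$-sized annulus, and the Leibniz rule respectively.
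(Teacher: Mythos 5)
The paper itself offers no proof of this lemma; it is stated with a pointer to \cite{Mazzeo-Pacard96} and \cite{Pacard-Riviere}, so the only meaningful comparison is with the standard argument those references contain. Your treatment of (i)--(iii) is exactly that argument and is correct: $\nabla$ shifts the differentiation index and the weight by one simultaneously; on an annulus of width $\sim s$ the H\"older quotient of $\nabla^k w$ interpolates against $s\sup|\nabla^{k+1}w|$ (with the minor caveat that when the chord $[x,x']$ leaves $N_s\setminus N_{s/2}$ you should instead use $|x-x'|\gtrsim s$ and bound the quotient by $2\sup|\nabla^k w|\,|x-x'|^{-\alpha}$); and Leibniz plus the telescoping trick handles products, with the weights factoring as $s^{-\gamma_1-\gamma_2}=s^{-\gamma_1}s^{-\gamma_2}$.

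Part (iv) contains a genuine gap which you sense but do not close. In the Fa\`a di Bruno term $w^{p-m}\prod_i\nabla^{a_i}w$ with $2\le m\le k$ and $p<m$, the prefactor is a negative power of $w$, and your ``total homogeneity $p$'' count is a formal degree count, not an estimate: to convert it into a bound by $\|w\|_{k,\gamma,\alpha}^p$ one needs a pointwise lower bound of the type $w(x)\gtrsim\sup_{N_s\setminus N_{s/2}}\bigl(w+s|\nabla w|+\cdots\bigr)$, and this does not follow from $w>0$ together with $w\in C^{k,\alpha}_\gamma$. Concretely, for $k=2$ and $1<p<2$ the term $p(p-1)w^{p-2}\nabla w\otimes\nabla w$ is only saved by the extra inequality $|\nabla w|^2\le C\,w\sup|\nabla^2w|$ valid for positive $C^{1,1}$ functions, and for $k=4$ with $p$ close to $1$ (which the hypotheses of the paper allow, since $\frac{N}{N-4}\to1$) no such elementary saving suffices. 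Your fallback of ``invoking directly the standard estimate that $C^{k,\alpha}_\gamma$ is closed under $w\mapsto w^p$'' is precisely the statement to be proved, so it is circular. The honest repair is to add --- or to observe that in every application in this paper one has --- a two-sided comparison $c\,d(x,\Sigma)^{\gamma}\le w\le C\,d(x,\Sigma)^{\gamma}$ near $\Sigma$ (as holds for $\bar u_{\bar\ve}$ and for $\bar u_{\bar\ve}+v$ with $v$ in the contraction ball), under which $w^{p-m}$ is controlled and your bookkeeping does go through. Finally, note that the homogeneity you compute naturally places $w^p$ in $C^{k,\alpha}_{p\gamma}$ rather than $C^{k,\alpha}_{\gamma}$; keeping the weight $\gamma$ on both sides, as the statement does, additionally requires $s^{(p-1)\gamma}\le C$ for $0<s<\sigma$, i.e.\ $\gamma\ge0$, and your write-up should either prove the $p\gamma$ version or record this restriction.
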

 
 \subsection{Fermi coordinates}

We now compute the Fermi coordinates for our problem. For $\sigma>0$ small we can choose fermi coordinates in $N_\sigma$ as follows:  First we fix any local coordinate system $y=(y_1,\dots,y_k)$ on $\Sigma$ ($k$ is the dimension of $\Sigma$). For every $y_0\in\Sigma$ there exists an orthonormal frame field $E_1,\dots,E_{n-k}$, basis of the normal bundle of $\Sigma$. Then we consider the coordinate system $$\Sigma\times\R^{n-k}\ni (y,z)\to y+\sum z_iE_i(y).$$ For $|z|<\sigma$ with $\sigma$ small, these are well-defined coordinate system in a neighborhood of $y_0$.  
 
 In this coordinate system the Euclidean metric has the following expansion $$g_{\R^n}=g_{\R^{n-k}}+g_\Sigma +O(|z|)dzdy+O(|z|)dy^2.$$
 Therefore, \begin{align} \notag \D_{\R^n} =\D_{\R^{n-k}}+\D_\Sigma  +\mathsf{e_1}\nabla+\mathsf{e_2} \nabla^2, \end{align} where $\mathsf{e_i}$, $i=1,2$ satisfy $$\|\mathsf{e_1}\|_{C^{0,\alpha}_0}+\|\mathsf{e_2}\|_{C^{0,\alpha}_1}\leq c. $$   Using this we also have \begin{align}\label{Delta}\D^2_{\R^n} =\D^2_{\R^{n-k}}+\D^2_\Sigma  +2\D_{\R^{n-k}}\D_\Sigma  +\sum_{i=1}^4\mathsf{e_i}\nabla^i ,\end{align} where \begin{align} \label{est-13}\|\mathsf{e_1}\|_{C^{0,\alpha}_{-2}}+\|\mathsf{e_2}\|_{C^{0,\alpha}_{-1}}+\|\mathsf{e_3}\|_{C^{0,\alpha}_0}+\|\mathsf{e_4}\|_{C^{0,\alpha}_1}\leq c.\end{align}
 
 \subsection{The singular solution}
 
 The building block for our theory is the existence of a singular solution with different behaviour at the origin and at infinity. The following theorem provides such a solution. We refer the reader to the appendix for a proof of this result. 
 
  \begin{thm}\label{exists1}
  Let $N \geq 5$. 
  Suppose that $\frac{N}{N-4}<p<\frac{N+4}{N-4}$. Then for every $\beta>0$ there exists a unique radial solution $u$ to \begin{align} \label{singu11} \left\{\begin{array}{ll} \D^2u=u^p\quad\text{in }\R^N\setminus\{0\}\\ u>0 \quad\text{in }\R^N\setminus\{0\}\\
  \lim_{|x|\to0} u(x)=\infty,  \end{array}  \right.\end{align}  such that $$\lim_{r\to\infty}r^{N-4}u(r)=\beta,\quad \lim_{r\to 0^+}r^\frac{4}{p-1}u(r)=c_p:=[k(p,N)]^\frac{1}{p-1},$$ where \begin{align*}k(p,N)&=\frac{8(p+1)}{(p-1)^4}\left[ N^2(p-1)^2+8p(p+1)+N(2+4p-6p^2) \right]. \end{align*} \end{thm}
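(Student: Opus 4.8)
The plan is to construct the radial singular solution by an ODE shooting argument combined with a rescaling (Emden–Fowler / Fowler) change of variables, following the strategy familiar from the second-order singular Yamabe problem but adapted to the biharmonic operator. First I would pass to the radial variable and write the equation $\Delta^2 u = u^p$ in $\R^N\setminus\{0\}$ as a fourth-order ODE in $r=|x|$. The exact power $r^{-4/(p-1)}$ is the scaling-invariant profile: substituting $u(r)=c_p\,r^{-4/(p-1)}$ into the ODE forces $c_p^{p-1}=k(p,N)$, where $k(p,N)$ is precisely the constant obtained by applying the iterated radial Laplacian to $r^{-4/(p-1)}$, and the condition $\frac{N}{N-4}<p<\frac{N+4}{N-4}$ is exactly what makes $k(p,N)>0$ (equivalently $0<\frac{4}{p-1}<N-4$, so the singular exponent sits strictly between the two homogeneity thresholds for $\Delta^2$ on $\R^N\setminus\{0\}$). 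Introducing the logarithmic coordinate $t=-\log r$ and the Emden–Fowler change $u(r)=r^{-4/(p-1)}v(t)$ turns the ODE into an autonomous fourth-order equation for $v(t)$ whose constant equilibria are $v\equiv 0$ and $v\equiv c_p$; the sought solution corresponds to a heteroclinic-type orbit that is asymptotic to the equilibrium $c_p$ as $t\to+\infty$ (i.e. $r\to 0$) and decays like the fundamental solution $r^{-(N-4)}$ as $t\to-\infty$ (i.e. $r\to\infty$), which in the $v$ variable means $v(t)\sim \beta\, e^{-(N-4-\frac{4}{p-1})t}\to 0$.

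Next I would set up the shooting. Linearizing the autonomous equation at the equilibrium $c_p$ gives a constant-coefficient fourth-order ODE; its characteristic polynomial has roots whose real parts determine a stable/unstable splitting, and one checks (using $\frac{N}{N-4}<p<\frac{N+4}{N-4}$) that the linearization at $c_p$ has a two-dimensional stable manifold, so there is a one-parameter family (after quotienting by the $t$-translation that reflects the scaling symmetry $u\mapsto \lambda^{4/(p-1)}u(\lambda\,\cdot)$) of solutions converging to $c_p$ as $t\to+\infty$. Shooting this family backward in $t$, I would track the behavior as $t\to-\infty$: an energy/monotonicity functional (a Pohozaev- or Hamiltonian-type quantity that is constant or monotone along orbits of the autonomous system) together with positivity control should show that each such orbit stays positive and, at the $r\to\infty$ end, is a global subsolution-type object that decays; a standard comparison with the fundamental solution $r^{-(N-4)}$ of $\Delta^2$ then yields $\lim_{r\to\infty} r^{N-4}u(r)=\beta$ for some $\beta\ge 0$, and a scaling argument converts the translation parameter into the prescribed value of $\beta>0$: if $u$ realizes some $\beta_0>0$, then $\lambda^{-4/(p-1)}u(x/\lambda)$ realizes $\beta_0\lambda^{N-4-\frac{4}{p-1}}$, which sweeps out all of $(0,\infty)$. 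Uniqueness for each fixed $\beta$ follows because the stable manifold at $c_p$ is unique and the translation/scaling parameter is pinned down by $\beta$; one also has to rule out $\beta=0$, i.e. the orbit that is homoclinic to $0$ or connects $c_p$ to $c_p$, which is excluded by the energy identity.

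The two points requiring the most care, and where I expect the real work to lie, are: (i) establishing global existence and strict positivity of the shooting orbits for all $t\in\R$ — for a fourth-order ODE there is no maximum principle, so one cannot simply invoke positivity of $u$; instead I would exploit the variational/Hamiltonian structure of the autonomous system, show the relevant energy is coercive enough to prevent blow-up in finite ``time,'' and use the sign of $k(p,N)>0$ together with a careful analysis of the phase portrait near the two equilibria to keep $v>0$; and (ii) identifying the precise asymptotics at both ends with the sharp rates $r^{-4/(p-1)}$ (at $0$) and $r^{-(N-4)}$ (at $\infty$), which amounts to an ODE stable-manifold linearization argument at $c_p$ plus a decay/Green's-function argument at infinity, ensuring in particular that the coefficient $\beta$ is genuinely nonzero and that no faster-decaying ``fast'' solution $r^{-(N-4)}$ with a logarithmic correction or a different leading rate sneaks in. Since the full details of this fourth-order ODE analysis are lengthy, they are carried out in the appendix; here I only indicate that the structure is entirely parallel to the Fowler analysis of the second-order case, with the single scalar second-order equation replaced by the autonomous fourth-order system, and with the positivity input replaced by the phase-space/energy analysis just described.
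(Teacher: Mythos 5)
Your proposal is a genuinely different route from the paper's. The paper does not attempt a phase-space shooting argument at all: instead it applies the Kelvin transform $\tilde u(y)=|x|^{N-4}u(x)$, $x=y/|y|^2$, which converts \eqref{singu11} into the regular H\'enon-type problem $\Delta^2\tilde u=|y|^\alpha\tilde u^p$ in $\R^N$ with $\alpha=(N-4)p-(N+4)\in(-4,0)$, so that the prescribed singular rate at the origin for $u$ becomes the decay rate at infinity for $\tilde u$, and $\beta$ becomes the value $\tilde u(0)$. Existence of a positive, bounded, radial, decreasing $\tilde u$ is then produced by a Schauder fixed point / Rabinowitz global-bifurcation argument on an auxiliary Dirichlet problem on the unit ball, followed by a blow-up rescaling limit; uniqueness for fixed $\tilde u(0)$ is a contraction-type estimate on the transformed equation. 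The paper does use a Fowler-type change of variables, but only once, and only to produce the energy inequality that gives the a priori bound $r^4u^{p-1}\le\frac{p+1}{2}k(p,N)$ --- the existence itself never goes through phase space.

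Beyond the difference in strategy, your outline contains a concrete error that undermines the shooting step: the stable manifold of the equilibrium $c_p$ (orbits converging to $c_p$ as $t\to+\infty$, i.e.\ $r\to0$) is \emph{three}-dimensional, not two. The eigenvalues of the linearization at $c_p$ in your $t$-variable are $-(\gamma_0^{\pm\pm}+\tfrac{4}{p-1})$, and by \eqref{indicial18} three of the four indicial roots exceed $-\tfrac{4}{p-1}$ (namely $\gamma_0^{--}$, $\gamma_0^{+-}$, $\gamma_0^{++}$; only $\gamma_0^{-+}$ falls below). So after quotienting by $t$-translation you are left with a two-parameter family, not a one-parameter family, and the claim that ``each such orbit'' stays positive and decays like $r^{4-N}$ at infinity cannot be right: the decaying behavior at $t\to-\infty$ is a codimension-two condition in the four-dimensional phase space (only $\tilde\gamma\in\{2-N,4-N\}$ decay), so generic orbits on the stable manifold of $c_p$, run backward, will not land on it. A genuine shooting proof would need a transversality/intersection argument between the three-dimensional stable manifold of $c_p$ and the two-dimensional stable manifold of the zero equilibrium (at $r=\infty$), together with positivity control without a maximum principle --- exactly the delicacies the paper sidesteps by the Kelvin-transform route. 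The further assertions (``uniqueness follows because the stable manifold is unique,'' ``$\beta=0$ is excluded by the energy identity'') inherit the same gap and would need actual arguments, not just plausibility.
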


\subsection{Approximate solutions} 

Let $u$ be a singular radial solution to \eqref{singu11}. Then $u_\ve(x):=\ve^{-\frac{4}{p-1}}u(\frac x\ve)$ is also a solution to \eqref{singu11}. Note that $$u_\ve(x)\leq C(\delta,u) \ve^{N-4-\frac{4}{p-1}}\quad\text{for }|x|\geq\delta, $$ which shows that $u_\ve\to0$ locally uniformly in $\R^N\setminus\{0\}$. Due to this scaling and the asymptotic behavior of $u$ at infinity, for a given $\alpha>0$,  we can find a solution $u_1$ such that $r^4u_1^{p-1}(r)\leq \alpha$ on  $(1,\infty)$.

\subsubsection{Isolated singularities}  

Let $\Sigma=\{x_1,x_2,\dots,x_K\}$ be a set of finite points in $\Omega$. To construct a solution to \eqref{eq-domain} which is singular precisely at the points of $\Sigma$, we start by constructing  an approximate solution to \eqref{eq-domain} which is singular exactly on $\Sigma$. Let us first fix a smooth cut-off function $\chi$ such that $\chi=1$ on $B_1$ and $\chi=0$ on $B_2^c$. Also fix $R>0$  such that $B_{2R}(x_i)\subset\Omega$ and $B_{2R}(x_i)\cap B_{2R}(x_j)=\emptyset$ for every $i\neq j$, $i,j\in\{1,2,\dots,K\}$. Let $\bar\ve=(\ve_1,\ve_2,\dots,\ve_K)$ be a $K$-tuple of dilation parameter.  An approximate solution $\bar u_{\bar\ve}$ is defined by $$\bar u_{\bar \ve}(x)=\sum_{i=1}^K\chi_R(x-x_i)u_{\ve_i}(x-x_i)=\sum_{i=1}^K \ve_i^{-\frac{4}{p-1}}\chi(\frac{x-x_i}{R})u_1(\frac{x-x_i}{\ve_i}) .$$ The asymptotic behavior of $u_1$ at infinity  leads to the following error of approximation:
\begin{lem}  The error  $f_\bve:=\D^2 \bar u_\bve- \bar u^p_\bve$ satisfies $$\|f_\bve\|_{C^{0,\alpha}_{\gamma-4}}\leq C_\gamma \ve_0^{N-\frac{4p}{p-1}}\quad\text{for } 0<\ve_i\leq\ve_0\leq1,$$ for every $\gamma\in\R$. \end{lem}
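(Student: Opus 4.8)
The plan is to exploit the fact that each building block $u_{\ve_i}$ is an \emph{exact} solution of $\D^2u_{\ve_i}=u_{\ve_i}^p$ on $\R^N\setminus\{0\}$, so that the error $f_\bve$ is entirely produced by the gluing cut-offs and is therefore supported away from $\Sigma$. First I would localize. Since the balls $B_{2R}(x_i)$ are pairwise disjoint, on $B_{2R}(x_i)$ one has $\bar u_\bve=\chi_R(\cdot-x_i)u_{\ve_i}(\cdot-x_i)$ and $\bar u_\bve^p=\chi_R^p(\cdot-x_i)u_{\ve_i}^p(\cdot-x_i)$, while $\bar u_\bve\equiv0$ outside $\bigcup_iB_{2R}(x_i)$. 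On $B_R(x_i)$ the cut-off is identically $1$, so there $f_\bve=\D^2u_{\ve_i}(\cdot-x_i)-u_{\ve_i}^p(\cdot-x_i)=0$, and $f_\bve\equiv0$ off $\bigcup_iB_{2R}(x_i)$. Hence $f_\bve$ is supported in $\bigcup_i\overline{B_{2R}(x_i)\setminus B_R(x_i)}$, a compact subset of $\Omega\setminus\Sigma$ lying at distance at least $R$ from $\Sigma$. Choosing the tubular neighborhood radius $\sigma<R$, every seminorm $|f_\bve|_{0,\alpha,s}$ with $s\in(0,\sigma)$ vanishes, so the weighted part of $\|f_\bve\|_{C^{0,\alpha}_{\gamma-4}}$ is zero and it only remains to bound $|f_\bve|_{C^{0,\alpha}(\bar\Omega\setminus N_{\sigma/2})}$; in particular the asserted estimate will hold for every $\gamma\in\R$, with $C_\gamma$ in fact independent of $\gamma$.

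Next I would estimate $f_\bve$ on each annulus $R\le|x-x_i|\le2R$, where
\[ f_\bve=(\chi_R-\chi_R^p)(\cdot-x_i)\,u_{\ve_i}^p(\cdot-x_i)+[\D^2,\chi_R(\cdot-x_i)]\,u_{\ve_i}(\cdot-x_i), \]
the commutator being a finite sum of terms $(\nabla^j\chi_R)(\nabla^{4-j}u_{\ve_i})$ with $1\le j\le4$. Inserting the scaling $u_{\ve_i}(y)=\ve_i^{-4/(p-1)}u_1(y/\ve_i)$ together with the bounds $|\nabla^ju_1(r)|\le Cr^{-(N-4)-j}$ and the analogous bound for the H\"older seminorms (valid for $r\ge1$), and using $\ve_i\le1<R$, one gets on $R\le|y|\le2R$
\[ |\nabla^{4-j}u_{\ve_i}(y)|\le C\,\ve_i^{-\frac{4}{p-1}-(4-j)}\left(\frac{|y|}{\ve_i}\right)^{-(N-4)-(4-j)}\le C\,\ve_i^{\,N-4-\frac{4}{p-1}}=C\,\ve_i^{\,N-\frac{4p}{p-1}}, \]
in which the powers of $j$ cancel; combined with $|\nabla^j\chi_R|\le C$ this bounds the commutator by $C\ve_i^{N-\frac{4p}{p-1}}$. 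For the remaining term, $|u_{\ve_i}^p|\le C\ve_i^{p(N-\frac{4p}{p-1})}\le C\ve_i^{N-\frac{4p}{p-1}}$ since $N-\frac{4p}{p-1}>0$ (equivalently, $p>\frac{N}{N-4}$) and $\ve_i\le1$. To upgrade the sup-bound to a $C^{0,\alpha}$-bound I would differentiate once more: the same cancellation of the powers of $j$ gives $|\nabla f_\bve|\le C\ve_i^{N-\frac{4p}{p-1}}$ on each annulus, and since $f_\bve$ extends by zero to a $C^1$ function on the bounded set $\bar\Omega$ (the cut-off factors vanish near $\partial B_{2R}(x_i)$ and $\chi_R-\chi_R^p$ vanishes near $\partial B_R(x_i)$), the mean value inequality yields $[f_\bve]_{C^{0,\alpha}(\bar\Omega)}\le C(\mathrm{diam}\,\Omega)^{1-\alpha}\|\nabla f_\bve\|_{L^\infty}\le C\ve_0^{N-\frac{4p}{p-1}}$. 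Taking the maximum over $i$ and using $\ve_i\le\ve_0\le1$ finishes the estimate.

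The computation is essentially routine; the one point needing a separate argument is the decay of the derivatives of $u_1$ at infinity, since Theorem \ref{exists1} only records it for $u_1$ itself. I would obtain $|\nabla^ju_1(r)|\le Cr^{-(N-4)-j}$ (and the H\"older analogue) from rescaled interior Schauder estimates for $\D^2u_1=u_1^p$ on the dyadic annuli $\{r/2\le|y|\le2r\}$: there $\|u_1\|_{L^\infty}\sim r^{-(N-4)}$, hence $\|u_1^p\|_{L^\infty}\sim r^{-p(N-4)}$, and the contribution $r^{4-p(N-4)}$ of the right-hand side is dominated by $r^{-(N-4)}$ precisely because $p>\frac{N}{N-4}$. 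Thus the hypothesis $p>\frac{N}{N-4}$ is used twice: to make the exponent $N-\frac{4p}{p-1}$ positive, and to control the nonlinear term in the Schauder estimate.
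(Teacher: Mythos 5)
Your proof is correct and implements precisely what the paper's one-line hint (``the asymptotic behavior of $u_1$ at infinity leads to the following error of approximation'') intends: since each $u_{\ve_i}$ is an exact solution, $f_\bve$ is supported on the cut-off annuli away from $\Sigma$, the weighted seminorms vanish after choosing $\sigma<R$, and the derivative-decay $|\nabla^k u_1(r)|\lesssim r^{4-N-k}$ (obtainable by rescaled Schauder on dyadic annuli, using $p>\frac{N}{N-4}$) gives the uniform bound $\ve_i^{N-\frac{4p}{p-1}}$ with the powers of $\ve_i$ from the scaling cancelling exactly as you computed. The paper states the lemma without proof; your argument fills that gap correctly and also explains why $C_\gamma$ can in fact be taken independent of $\gamma$.
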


\subsubsection{Higher dimensional singularities} \label{section-higher}

Let $\Sigma_i\subset \Omega$ be a $k_i$-dimensional submanifold in $\Omega$ for $i=1,2,\dots,K$. We fix $\sigma>0$ small such that  Fermi coordinates are well-defined  on  the Tubular  neighborhood $N_{i,\sigma}$  of $\Sigma_i$ for every $i=1,\dots,K$, and $N_{i,2\sigma}\cap N_{j,2\sigma}=\emptyset$ for $i\neq j$. Fix a smooth radially symmetric  cut-off function $\chi$ such that $\chi=1$ on $B_1$ and $\chi=0$ on $B_2^c$. Then for $0<\ve_i<1$ and $0<R<\frac\sigma2$ we set $$\bar u_{\ve_i}(x,y)=\ve_i^{-\frac{4}{p-1}}u_1(\frac {x}{\ve_i})\chi(\frac yR)=:u_{\ve_i}(x)\chi_R(y).$$ An approximate solution which is singular only on $\cup_{i=1}^K\Sigma_i$ is defined by $$\bar u_{\bar \ve}=\sum_{i=1}^K\bar u_{\ve_i}.$$ Using the expansion \eqref{Delta} and the estimate \eqref{est-13} we see that the error $f_\bve:=\D^2\bar u_\bve-\bar u_\bve^p$ satisfies $$\|f_\bve\|_{C^{0,\alpha}_{\gamma-4}}\leq C\ve_0^q,\quad 0<\ve_i\leq \ve_0\leq1,\, q:=\frac{p-5}{p-1}-\gamma>0,$$ for $\gamma< \frac{p-5}{p-1}$. In our applications, $\gamma$ will be bigger than  $ -\frac{4}{p-1}$.

\begin{rem}
To prove existence of solution to \eqref{eq-domain} with singular set $\Sigma$, we shall look for solutions of the form $u=\bar u_\bve+v$ (in both cases, that is, $\Sigma$ is finite and higher dimensional). Then, $v$ has to satisfy \begin{align}\label{eq-v} L_\bve v+f_\bve+Q(v)=0,\quad L_\bve:=\D^2-p\bar u_\bve^{p-1},  \end{align}  where \begin{align}\label{Q}Q(v)=-(\bar u_\bve+v)^p+\bar u_\bve^p+p\bar u_\bve^{p-1} v . \end{align}
\end{rem}

 \medskip

 \section{Linearized operator in $\R^N \backslash \left \{ 0 \right \}$}
 
 Since our purpose is to use an implicit function theorem, it is crucial to understand the linearized problem. For this, we invoke the analytic theory of edge operators as in \cite{mazzeoEdge,mazzeoVertman} but also some more general arguments in \cite{Ao1} as we mentioned in the introduction.

We consider the linearized operator $$L_1=\D^2-pu_1^{p-1}$$ where in polar coordinates we denote $$\D=\frac{\partial^2}{\p r^2}+\frac{N-1}{r}\frac{\p}{\p r}+\frac{1}{r^2}\D_\theta.$$  

\subsection{Indicial roots}
 
Next we compute indicial roots of the linearized operator $L_1$. We recall that   $\gamma_j$ is an indicial root of $L_1$ at $0$ if $L_1(|x|^{\gamma_j}\vp_j)=o(|x|^{\gamma_j-4})$, where $\vp_j$ is the $j$-th eigenfunction of $-\D_\theta$, that is $-\D_\theta\vp_j=\lambda_j\vp_j$, $$\lambda_0=0,\quad \lambda_j=N-1,\quad \text{for }j=1,\dots,N,$$ and so on.  One shows  that $\gamma_j$ is a solution to $$[\gamma(\gamma-1)+(N-1)\gamma-\lambda_j][(\gamma-2)(\gamma-3)+(N-1)(\gamma-2)-\lambda_j]-A_p=0,$$ where \begin{align}\label{Ap}A_p:=p\lim_{r\to0}r^4u_1^{p-1}(r)=pk(p,N).\end{align} The solutions are given by  \begin{align*}\gamma_j^{\pm\pm} =\frac12\left[4-N\pm\sqrt{(N-2)^2+4+4\lambda_j\pm 4\sqrt{(N-2)^2+4\lambda_j+A_p}}\right] .\end{align*}
 We have that  ($\Re$ denotes the real part) \begin{align}\label{indicial18}\gamma_0^{-+}<4-N<-\frac{4}{p-1}<\Re(\gamma_0^{--})\leq \frac{4-N}{2}\leq \Re(\gamma_0^{+-})<0<2<\gamma_0^{++},  \end{align}  and \begin{align} \gamma_j^{-\pm}<-\frac{4}{p-1},\quad \Re(\gamma_0^{+-})<\gamma_j^{+\pm}\quad\text{for }j\geq1. \end{align}
To prove the  above relations one uses that $A_p$ is monotone. Indeed,   for any fixed $N\geq 5$, $\frac{\p}{\p p}A_p$  vanishes at the following points  $$p_0:=\frac{N+2}{N-6},\quad p_1^\pm= \frac{N+4\pm 2\sqrt{N^2+4}}{3N-8}.$$   Using this one would get that $A_p$ is monotone increasing on $(\frac{N}{N-4},\frac{N+4}{N-4})$. 
 
 Since $\lim_{r\to\infty}r^4u_1^{p-1}(r)=0$, the indicial roots of $L_1$ at infinity are the same as for the $\D^2$ itself. These values are given by  \begin{align*}\tilde \gamma_j^{\pm\pm} =\frac12\left[4-N\pm\sqrt{(N-2)^2+4+4\lambda_j\pm 4\sqrt{(N-2)^2+4\lambda_j}}\right] .\end{align*} In particular, 
 $$\tilde\gamma_0^{\pm\pm}\in\{2-N,4-N,0,2 \},\quad \tilde\gamma_1^{+\pm}\in\{1,3\},\quad  \tilde \gamma_j^{+\pm}\geq 1\quad\text{and }\tilde \gamma_j^{-\pm}<4-N\quad\text{for }j\geq 1.$$ 

 We shall choose     $\mu,\nu$ in the region   \begin{align}\label{mu-nu} \frac{-4}{p-1}<\nu<\Re(\gamma_0^{--})\leq\frac{4-N}{2}\leq\Re(\gamma_0^{+-})<\mu 
 ,  \end{align} so that $\mu+\nu=4-N$.

\subsection{Study of the linearized operator}
 
 For a function  $w=w(r,\theta)$ we decompose it as \begin{align}\label{decompose}w(r,\theta)=\sum_{j=0}^\infty w_j(r)\vp_j(\theta). \end{align} Then \begin{align*}\D^2 (w_j\vp_j)&=\left(w_j^{iv}+\frac{2(N-1)}{r}w_j'''+\frac{N^2-4N+3-2\lambda_j}{r^2}w_j''-\frac{(N-3)(N-1+2\lambda_j)}{r^3}w_j'\right. \\ &\quad  \left.+\frac{2(N-4)\lambda_j+\lambda_j^2}{r^4}w_j\right)\vp_j\\ &= : (w_j^{iv}+\frac{a_{1,j}}{r}w_j'''+\frac{a_{2,j}}{r^2}w_j''-\frac{a_{3,j}}{r^3}w_j'+\frac{a_{4,j}}{r^4}w_j)\vp_j.
  \end{align*}
Thus, $L_1 w=0$ if and only if,  for every $j=0,1,2,\dots$ \begin{align}\label{eq-wj-22}w_j^{iv}+\frac{a_{1,j}}{r}w_j'''+\frac{a_{2,j}}{r^2}w_j''-\frac{a_{3,j}}{r^3}w_j'+\frac{a_{4,j}-V_p(r)}{r^4}w_j=0,\quad V_p(r):=pr^4u_1^{p-1}(r). \end{align} 
One obtains 
 \begin{align} &r^{N-1}w_j(w_j^{iv}+\frac{a_{1,j}}{r}w_j'''+\frac{a_{2,j}}{r^2}w_j''-\frac{a_{3,j}}{r^3}w_j' +\frac{a_{4,j}}{r^4}w_j)\notag\\ 
 &=(r^{N-1}w_jw'''_j)'   -(r^{N-1}w_j'w_j'')'+(N-1)(r^{N-2}w_jw_j'')' - (N-1+2\lambda_j)(r^{N-3}w_jw_j')' \notag\\ &\quad  +(N-1+2\lambda_j)r^{N-3}(w')^2+  r^{N-1}(w'')^2+a_{4,j}r^{N-5}w_j^2.  \label{byparts25}
  \end{align}

  \begin{prop} \label{inj-prop-1}  Let $w\in C^{4,\alpha}_{\mu,0}(\R^N\setminus\{0\})$  be a solution to $L_1w=0$. Then $w\equiv0$. \end{prop}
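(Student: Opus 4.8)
The plan is to use the decomposition \eqref{decompose}, $w=\sum_j w_j(r)\vp_j(\theta)$, reduce the equation $L_1w=0$ to the family of ODEs \eqref{eq-wj-22} for each mode $w_j$, and then prove $w_j\equiv0$ by a combined argument using (a) the integration-by-parts identity \eqref{byparts25} to control the energy, and (b) the indicial root analysis from the previous subsection to rule out nontrivial solutions in the weighted space $C^{4,\alpha}_{\mu,0}$. The crucial structural input is the choice \eqref{mu-nu} of $\mu$: since $\Re(\gamma_0^{+-})<\mu$ but $\mu$ still lies below $\gamma_0^{++}$, the weight $\mu$ sits strictly between two consecutive indicial roots (in the mode $j=0$ it separates $\gamma_0^{+-}$ from $\gamma_0^{++}$; for $j\ge1$ one uses $\Re(\gamma_0^{+-})<\gamma_j^{+\pm}$). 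This is precisely the situation in which an ODE solution decaying like $r^\mu$ near $0$ must in fact decay faster than the indicial rate and, by a bootstrap, must be identically zero — or, equivalently, the kernel of $L_1$ on this weighted space is trivial.

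First I would fix a mode $j$ and work with \eqref{eq-wj-22}. The membership $w\in C^{4,\alpha}_{\mu,0}$ gives $|w_j(r)|\lesssim r^\mu$ near $0$ and, from the definition of the $C^{k,\alpha}_{\mu,\mu'}(\R^N\setminus\{0\})$ norm with $\mu'=0$, gives that $w_j$ and its derivatives (appropriately weighted) are bounded as $r\to\infty$; in fact one gets the mildly stronger statement that $w_j$ behaves no worse than the indicial rate $\tilde\gamma_0^{+\pm}=0$ at infinity, so $w_j$ stays bounded and $r^kw_j^{(k)}$ is controlled. Next I would multiply \eqref{eq-wj-22} by $r^{N-1}w_j$ and integrate over $(\delta,R)$, using the identity \eqref{byparts25} to convert the left side into a boundary term plus the manifestly nonnegative bulk integral
\begin{align*}
\int_\delta^R\left((N-1+2\lambda_j)r^{N-3}(w_j')^2+r^{N-1}(w_j'')^2+a_{4,j}r^{N-5}w_j^2-V_p(r)r^{N-5}w_j^2\right)\,dr.
\end{align*}
The boundary terms at $r=\delta$ involve products like $r^{N-1}w_jw_j'''$, $r^{N-1}w_j'w_j''$, $r^{N-2}w_jw_j''$, $r^{N-3}w_jw_j'$; using $|w_j^{(k)}|\lesssim r^{\mu-k}$ these are $O(\delta^{2\mu+N-4})$, which tends to $0$ precisely because $2\mu+N-4=\mu-\nu>0$ by \eqref{mu-nu} (here one uses $\mu+\nu=4-N$ together with $\mu>\nu$). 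Similarly the boundary terms at $r=R\to\infty$ vanish using the decay/boundedness at infinity together with the fact that $\mu<0<\gamma_0^{++}$ keeps us below the growing solution. One must also check the potential term: since $V_p(r)=pr^4u_1^{p-1}(r)\to A_p>0$ as $r\to0$ and $\to0$ as $r\to\infty$, and since for $j\ge2$ the coefficient $a_{4,j}=2(N-4)\lambda_j+\lambda_j^2$ dominates $V_p$ while the term $(N-1+2\lambda_j)r^{N-3}(w_j')^2$ provides a further positive contribution, the quadratic form is coercive for high modes; for the low modes $j=0,1$ one argues more carefully using the precise indicial-root inequalities \eqref{indicial18}. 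Concluding, the energy identity forces $w_j''\equiv0$ and $w_j'\equiv0$ (for $j\ge1$) on $(0,\infty)$, hence $w_j$ is constant; the weighted condition $w_j(r)=o(1)$ as $r\to\infty$ (or the decay $r^\mu$ as $r\to0$ with $\mu<0$) then forces $w_j\equiv0$. For $j=0$ the energy gives $w_0''\equiv0$, so $w_0=a+br$, and again the weight at both ends kills $a$ and $b$.

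The main obstacle I expect is the handling of the low modes $j=0,1$ and the sign of the full quadratic form once the negative potential $-V_p(r)r^{N-5}w_j^2$ is included: naive coercivity fails there, and one genuinely needs the sharp ordering of indicial roots in \eqref{indicial18}–\eqref{mu-nu}, i.e. the fact that $\mu$ lies strictly above $\Re(\gamma_0^{+-})$ but strictly below $\gamma_0^{++}$, to exclude a nonzero solution. Concretely, if the energy argument does not directly close, the fallback is the standard ODE/indicial-root dichotomy: any solution of \eqref{eq-wj-22} that is $O(r^\mu)$ near $0$ is, by the variation-of-parameters/Frobenius analysis, a combination of the solutions behaving like $r^{\gamma_j^{\pm\pm}}$ up to logarithmic corrections from the perturbation $V_p$, and since $\mu$ avoids all these exponents and their half-line asymptotics, the solution must actually vanish to infinite order and hence be identically zero by uniqueness for the fourth-order ODE. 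Either route reduces to bookkeeping with the explicit roots already tabulated in the excerpt, which is why I would present the energy method as the clean argument and invoke the indicial analysis only where coercivity is not transparent. Summing over $j$ then yields $w\equiv0$.
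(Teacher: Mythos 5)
There is a genuine gap. Your energy identity does work for \emph{high} modes — the paper shows $\bar C(N,j)<1$ for $j\geq N+1$ and this forces $w_j\equiv 0$ — but it cannot close for the low modes $j=0,\dots,N$, and neither does your fallback. Two specific problems:

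\emph{First}, the assertion ``the energy identity forces $w_j''\equiv 0$'' (and, for $j=0$, ``gives $w_0''\equiv 0$'') is not what the identity yields. After multiplying \eqref{eq-wj-22} by $r^{N-1}w_j$ and integrating one obtains an inequality of the form
\begin{align*}
(1-\bar C(N,j))\int_0^\infty r^{N-1}(w_j'')^2\,dr\leq 0,
\end{align*}
which kills $w_j$ only when $\bar C(N,j)<1$. For $j=0$ the coefficient $a_{4,0}=0$ and the potential $V_p$ saturates the Hardy constants, so $\bar C(N,0)\geq 1$ and the form is \emph{not} coercive; no amount of bookkeeping with \eqref{indicial18} makes it so. This is exactly why the paper handles $j=0$ by a completely different argument: from $\mu>\frac{4-N}{2}$ the only allowed behavior near the origin is $r^{\gamma_0^{++}}$ with $\gamma_0^{++}>2$, which makes $w_0\in C^2$ with $\D w_0(0)=0$; then the sign of $\D^2 w_0 = pu_1^{p-1}w_0$ near the origin forces $\D w_0$ to be increasing, hence $\D w_0(r)\geq \D w_0(1)>0$ for $r>1$, giving $w_0\gtrsim r^2$ and contradicting the $O(\log r)$ bound at infinity. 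That is a growth/monotonicity argument, not an energy argument.

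\emph{Second}, your fallback — that the weight $\mu$ ``avoids all indicial roots'' so the solution must vanish — fails at infinity. The space is $C^{4,\alpha}_{\mu,0}$: the exterior weight is $0$, and $0$ is itself an indicial root of $\D^2$ at infinity ($\tilde\gamma_0^{\pm\pm}\in\{2-N,4-N,0,2\}$). So bounded behavior at infinity is \emph{allowed} by the weight and cannot be excluded by indicial considerations alone. For the modes $j=1,\dots,N$, where $\tilde\gamma_1^{+\pm}\in\{1,3\}$ and the corresponding decay rates $r^{1-N}, r^{3-N}$ are both compatible with the weight, the paper has to deploy a genuinely different tool: a sliding/maximum-principle comparison. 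One introduces $W(r)=-\int_r^\infty w_1\,dt$, compares $\tilde w_\rho(x)=(W'-\rho u_1')\frac{x_1}{|x|}$ with the radial kernel element $u_1'$, uses the sign structure $\D^2\tilde w_\rho\geq 0$, $\D\tilde w_\rho\leq 0$ on the half-space minus a small ball, and slides $\rho\to 0$ to force $W'\geq 0$; repeating with $-W'$ gives $w_1\equiv 0$. The $r^{3-N}$ case is then reduced to the $r^{1-N}$ case by adding a multiple of $u_1'$. None of this is captured by either the energy identity or the indicial-root dichotomy, so as written your proposal does not prove the proposition.
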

 \begin{proof} From the definition of the space $C^{4,\alpha}_{\mu,0}(\R^N\setminus\{0\})$ we have that   
 \begin{align}\label{asymp-w-24} \left\{  \begin{array}{ll}|w(x)|\leq C\log(2+|x|),\quad |x|^k |\nabla ^kw(x)|\leq C &\quad\text{for }|x|\geq 1,\quad k=1,\dots,4\\  \rule{0cm}{.5cm}|x|^{-\mu+k}|\nabla^k w(x)|\leq C &\quad\text{for }0<|x|\leq1, \quad k=0,\dots,4.\end{array}\right. \end{align}  Now we decompose $w$ as in \eqref{decompose}.
 
 First we show that $w_0=0$. From the choice of $\mu$ we see that if $w_0\not\equiv 0$ then $w_0$ should behave like $r^{\gamma_0^{++}}$ around the origin.  Therefore, without loss of any generality,  we can assume that $w_0\geq 0$ in a small neighborhood of the origin. Using the crucial fact $q:=\gamma_0^{++}>2$, thanks to  \eqref{indicial18},   we shall show that $w$ is actually $C^2$ and $\D w_0(0)=0$. Indeed, as $w_0$ satisfies $\D^2w_0=pu_1^{p-1}w_0=:f$ in $\R^N\setminus\{0\}$,  for $0<\ve<r$ we have $$(\D w_0)'(r)r^{N-1}=(\D w_0)'(\ve)\ve^{N-1}+\omega_N\int_{\ve<|x|<r}fdx,  \quad\omega_N:=|S^{N-1}|^{-1}.$$ As $\mu>(4-N)/2$ we see that $$|(\D w_0)'(\ve)|\ve^{N-1}\leq C\ve^{\mu-3}\ve^{N-1}\to0\quad\text{as }\ve\to0.$$ Hence, for $0<r_1<r_2$ we get  $$\D w_0(r_2)=\D w_0(r_1)+\omega_N\int_{r_1}^{r_2}\frac{1}{t^{N-1}}\int_{|x|<t}f(x)dxdt.$$ As $(\D w_0)'>0$ on $(0,\ve_0)$ for some $\ve_0>0$, that is $\D w_0$ is monotone increasing, we see from the above relation that $\lim_{r_1\to0^+}\Delta w_0(r_1)$ exists and finite. Here we used that $f(x)\approx |x|^{q-4}$, $q>2$. Thus, $w_0$ is $C^2$, and again as $q>2$, we must have $\D w_0(0)=0$.  In conclusion, $\D w_0(r)\geq \D w_0(1)>0$ for $r>1$, which leads to $w_0(r)\gtrsim r^2 $, a contradiction to \eqref{asymp-w-24}.

 For $j\geq 1$,  $w_j$ behaves like $r^{4-N-\tilde\gamma_j^{\pm\pm}}$ as $r\to\infty$. Since $\tilde \gamma_j^{-\pm}<4-N$ and $w_j=O(\log r) $ at infinity,  we must have $w_j\approx  r^{4-N-\tilde\gamma_j^{+\pm}}$. 
 
 Now multiplying the equation \eqref{eq-wj-22}    by $r^{N-1}w_j$, and using  \eqref{byparts25} we get (this is justified thanks to \eqref{asymp-w-24}, and the asymptotic behavior of $w_j$ at infinity) \begin{align*} (N-1+2\lambda_j) &\int_0^\infty r^{N-3}w'^2dr+\int_0^\infty r^{N-1}w''^2dr=\int_0^\infty [V_p(r)-2(N-4)\lambda_j-\lambda_j^2]r^{N-5}w_j^2dr \\ &\leq  \left[p\frac{p+1}{2} k(p,N)-2(N-4)\lambda_j-\lambda_j^2\right]\int_0^\infty r^{N-5}w_j^2 dr \\ &\leq   \left[ \frac{1}{16}N^3 (N+4)-2(N-4)\lambda_j-\lambda_j^2\right]\int_0^\infty r^{N-5}w_j^2dr\\ &=:C(N,j)\int_0^\infty r^{N-5}w_j^2dr. \end{align*}
 An integration by parts gives  $$\int_0^\infty r^{N-5}w_j^2dr\leq\frac{4}{(N-4)^2}\int_0^\infty r^{N-3}w_j'^2dr$$ 
 $$\int_0^\infty r^{N-3}w_j'^2dr\leq\frac{4}{(N-2)^2}\int_0^\infty r^{N-1}w_j''^2dr.$$ This leads to \begin{align*} \int_0^\infty r^{N-1}w_j''^2dr &\leq  \left[\frac{4   C(N,j)}{(N-4)^2} - (N-1+2\lambda_j) \right] \int_0^\infty r^{N-3}w_j'^2dr \\  & \leq  \left[\frac{4   C(N,j)}{(N-4)^2} - (N-1+2\lambda_j) \right]  \frac{4}{(N-2)^2}\int_0^\infty r^{N-1}w_j''^2dr  \\ &=:\bar C(N,j)\int_0^\infty r^{N-1}w_j''^2dr.\end{align*} One can show that $\bar C(N,j)<1$ for $j\geq N+1$, and hence $w_j\equiv0$ for $j\geq N+1$. 
 
 Finally,  we consider the case  $j=1,\dots N$.  For $\lambda_1=N-1$ we have that $\tilde\gamma_1^{+\pm}\in \{1,3\}$, and $\tilde\gamma_1^{-\pm} <4-N$. Therefore, $w_1$ should behave like $r^{1-N}$ or $r^{3-N}$ at infinity.  
 
 Let us first show that if $w_1=O(r^{1-N})$ at infinity and $w_1\vp_1\in C^{4,\alpha}_{\mu,0}$  then $w_1\equiv 0$.  
 
 We set $$W(r)=-\int_{r}^\infty w_1(t)dt, \quad r>0,$$ so that $W'=w_1$. 
 For $\ve>0$ small let $\Omega_\ve$ be the domain $$\Omega_\ve:=\left\{x\in\R^N:x_1>0,\, |x|>\ve  \right\}.$$  
 
 We know that $u'_1<0$, $\D u_1<0$, $(\D u_1)'>0 $ on $(0,\infty)$, 
 $$u_1(r)\approx r^{4-N},\quad u'_1 (r)\approx -r^{3-N},\quad  \D u_1(r)\approx -r^{2-N},  \quad (\D u_1)'(r)\approx r^{1-N} \quad\text{as }r\to\infty ,$$ and $$u_1(r)\approx r^{-\frac{4}{p-1}},\quad u_1' (r)\approx -r^{-\frac{4}{p-1}-1},\quad  \D u_1(r)\approx -r^{-\frac{4}{p-1}-2},  \quad (\D u_1)'(r)\approx r^{-\frac{4}{p-1}-3} \quad\text{as }r\to 0.$$  Therefore, $$W(r)=o(u_1(r)),\quad W'(r)=o(u_1'(r)), \quad (\D W)'(r)=o((\D u_1)'(r))\quad\text{as }r\to0\text{ or }\infty.$$ Setting  $$ \tilde w_\rho(x)=(W'(|x|)-\rho u_1'(|x|))\frac{x_1}{|x|}=\frac{\partial }{\partial x_1}(W-\rho u_1)(|x|),$$ we see that for $\rho>>1$ we have \begin{align*} \tilde   w_\rho\geq 0\quad \text{and }\quad \D \tilde  w_\rho=(\D W-\rho\D u_1)'(x)\frac{x_1}{|x|}\leq 0\quad \text{in }\Omega_\ve,\end{align*} equivalently  \begin{align}\label{w-rho} W'-\rho u_1'\geq 0\quad \text{and }\quad (\D W)'-\rho(\D u_1)'\leq 0\quad \text{in }\Omega_\ve.\end{align} 
  Now we set $$\rho_\ve:=\inf\{\rho>0:\eqref{w-rho}\text{ holds} \}.$$ We claim that $\rho_\ve\to0$ as $\ve\to0$. Indeed, as $\tilde w_{\rho_\ve}$ satisfies (recall that $\vp_1=\frac{x_1}{|x|}$) $$\D^2\tilde  w_{\rho_\ve}=pu^{p-1}\tilde w_{\rho_\ve}\geq0,\quad \D \tilde w_{\rho_\ve}\leq 0 \text{ in }\Omega_\ve,$$ by maximum principle \begin{align}
  \label{maximum}\tilde w_{\rho_\ve}>0\quad\text{and }\D\tilde  w_{\rho_\ve}<0\quad\text{in }\Omega_\ve.\end{align} On the other hand, if $\rho_\ve>0$ then  there exists $x_\ve\in\bar\Omega_\ve$ such that $$W'(x_\ve)-\rho_\ve u'_1(x_\ve) = 0\quad \text{ or  }\quad (\D W)'(x_\ve)-\rho_\ve(\D u_1)'(x_\ve)=0,$$ thanks to the definition of $\rho_\ve$ and the asymptotic behavior of $W',(\D W)', u_1', (\D u_1)'$. Since $W$ and $u_1$ are radially symmetric,  \eqref{maximum} implies that $|x_\ve|=\ve$. Hence, from the behavior of $W',(\D W)', u_1', (\D u_1)'$ around the origin, we conclude that $\rho_\ve\to0$.

 Thus we have shown that $W'\geq 0$ on $(0,\infty)$. In a similar way,  taking $$\tilde  w_\rho(x)=(-W'(|x|)-\rho u_1'(|x|))\frac{x_1}{|x|}$$ we would get that $W'\leq 0$ on $(0,\infty)$. This completes the roof. 
 
The same proof shows that there is no solution $w_1$ such that  $w_1(r)=u_1'(r)(1+o(1))$ around the origin, and $w_1(r)=o(u_1'(r))$ at infinity. 
 
 Now we show that  if $w_1=O(r^{3-N})$ at infinity and $w_1\vp_1\in C^{4,\alpha}_{\mu,0}$  then $w_1\equiv 0$.  Indeed, if  $w_1\not\equiv 0$, then $\tilde w_1:=u_1'+a w_1$ would satisfy  $\tilde w_1=u'_1(1+o(1))$ around the origin and $\tilde w_1=o(u_1')$ at infinity   for some non-zero constant $a$, which is a contradiction.  
 
 This finishes the lemma. 
  \end{proof} 
  
\begin{prop}\label{inj-prop-2}Let $w\in C^{4,\alpha}_{\mu,\mu}(\R^N\setminus\{0\})$ be a solution to $$\D^2 w-\frac{A_p}{r^4}w=0\quad\text{in }\R^N\setminus\{0\},$$ where $A_p$ is given by \eqref{Ap}. Then $w\equiv 0$.  \end{prop}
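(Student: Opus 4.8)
The plan is to exploit the exact scale invariance of the operator $L_0:=\D^2-A_p r^{-4}$: if $w$ solves $L_0w=0$ then so does $w(\lambda\,\cdot)$ for every $\lambda>0$, and separation of variables turns $L_0$ into a constant-coefficient operator in the variable $t=\log r$. Concretely, I would decompose $w(r,\theta)=\sum_{j\ge0}w_j(r)\vp_j(\theta)$ as in \eqref{decompose}; since the $\vp_j$ are $L^2(S^{N-1})$-orthonormal, membership $w\in C^{4,\alpha}_{\mu,\mu}(\R^N\setminus\{0\})$ passes to each mode, giving $|w_j^{(k)}(r)|\le C\,r^{\mu-k}$ for $0\le k\le4$ and all $r>0$. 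Moreover each $w_j$ solves exactly equation \eqref{eq-wj-22} with the potential $V_p(r)$ replaced by its boundary value $A_p$, i.e. the fourth-order Euler (equidimensional) ODE $w_j^{iv}+\frac{a_{1,j}}{r}w_j'''+\frac{a_{2,j}}{r^2}w_j''-\frac{a_{3,j}}{r^3}w_j'+\frac{a_{4,j}-A_p}{r^4}w_j=0$.

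Next I would solve this ODE explicitly. The substitution $r=e^{t}$ converts it into a linear constant-coefficient fourth-order ODE whose characteristic polynomial is precisely the indicial polynomial $\bigl[\gamma(\gamma-1)+(N-1)\gamma-\lambda_j\bigr]\bigl[(\gamma-2)(\gamma-3)+(N-1)(\gamma-2)-\lambda_j\bigr]-A_p$ from the subsection on indicial roots, whose four roots are the $\gamma_j^{\pm\pm}$. Hence the solution space of the $j$-th ODE is spanned by the homogeneous functions $r^{\gamma_j^{\pm\pm}}$ — with a term $r^{\gamma}(\log r)^m$ replacing $r^\gamma$ at a repeated root, and $r^{\Re\gamma}$ times a bounded oscillation at a complex root — so each $w_j$ is one such finite linear combination.

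I would then impose the weight. For a \emph{nontrivial} combination of these building blocks, $|w_j(r)|$ is comparable, as $r\to0$, to $r^{\rho_-}$ and, as $r\to\infty$, to $r^{\rho_+}$, where $\rho_-$ (resp. $\rho_+$) is the smallest (resp. largest) of the real parts of the exponents actually occurring, with a possible extra $|\log r|^m$ that can only \emph{increase} the size. The two-sided bound $|w_j(r)|\le C\,r^{\mu}$ then forces $\rho_-=\rho_+=\mu$ with no logarithmic factor; in particular $\mu$ would have to be one of the $\gamma_j^{\pm\pm}$. But $\mu$ was chosen in the admissible range \eqref{mu-nu}, which (as is standard in weighted/edge analysis) we take to avoid the discrete set $\{\gamma_j^{\pm\pm}:j\ge0\}$ of indicial roots: for $j=0$ this is exactly \eqref{indicial18}, namely $\Re(\gamma_0^{+-})<\mu<\gamma_0^{++}$, while for $j\ge1$ one has $\gamma_j^{-\pm}<-\tfrac{4}{p-1}<\mu$ and only needs $\mu\neq\gamma_j^{+\pm}$, which excludes at most countably many values and is compatible with \eqref{mu-nu}. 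Hence $w_j\equiv0$ for every $j$, so $w\equiv0$.

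The argument is elementary once scale invariance is used to reduce to Euler ODEs; the only point that needs genuine care is separating the weight $\mu$ from the indicial roots of \emph{all} angular modes at once — for $j=0$ this is encoded in \eqref{indicial18}, and for $j\ge1$ it rests on the explicit formula for $\gamma_j^{\pm\pm}$ together with the monotonicity of $p\mapsto A_p$ already used in the excerpt. I would also note that, unlike in Proposition \ref{inj-prop-1}, a direct Hardy–Rellich energy identity of the form \eqref{byparts25} is \emph{not} available here: a function in $C^{4,\alpha}_{\mu,\mu}$ need only decay like $r^{\mu}$ at infinity with $\mu>(4-N)/2$, so the integrals $\int_0^\infty r^{N-5}w_j^2\,dr$ and $\int_0^\infty r^{N-3}w_j'^2\,dr$ may diverge at infinity, which is precisely why one argues through the explicit indicial roots rather than through an energy estimate.
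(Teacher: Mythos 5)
Your proof is correct and follows essentially the same route as the paper's: the equation is an Euler (scale-invariant) equation, so each angular mode is a finite linear combination of the homogeneous functions $r^{\gamma_j^{\pm\pm}}$ (with possible $\log$ or oscillatory factors), and since $\mu$ is not an indicial root none of these can be $O(r^\mu)$ both as $r\to0$ and $r\to\infty$. You in fact spell out two points the paper leaves implicit — the possible $\log$/oscillatory terms and the need for $\mu$ to avoid the indicial roots for all $j\geq1$, not just $j=0$ — and your closing remark about why the Hardy–Rellich energy identity of Proposition~\ref{inj-prop-1} is unavailable here (divergence at infinity for $\mu>(4-N)/2$) is accurate.
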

  \begin{proof} Note that in this case also we have same indicial roots as before. Therefore, $w$ is given by  $$w=\sum_{j\geq 0}(c_{1,j}r^{\gamma_j^{++}}+c_{2,j}r^-{\gamma_j^{+-}}+c_{3,j}r^{\gamma_j^{-+}}+c_{4,j}r^{\gamma_j^{--}})\vp_j,$$  for some  $c_{i,j}\in\R$. Since,   $r^{\gamma_j^{\pm\pm}}$ is not bounded by $r^{\mu}$ simultaneously  at the origin and at infinity,  we have that $c_{i,j}=0$ for every $(i,j)$. This finishes the lemma.   \end{proof}
  
  \medskip 
  
  \section{Linearized operator  on $\R^n\setminus\R^k$}

  The main goal of this section is to prove the following injectivity of the linearized operator on $\R^n\setminus\R^k$. We closely follow the approach in \cite{Ao1}. 
  
  \begin{prop}\label{inj-prop-3} The linearized operator   $$\mathbb{L}_1= \D^2_{x,y}-pu_1^{p-1}$$ is injective in $C^{4,\alpha}_{\mu,0}(\R^n\setminus\R^k)$. \end{prop}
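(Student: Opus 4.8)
\textbf{Proof strategy for Proposition \ref{inj-prop-3}.}
The plan is to reduce the injectivity on $\R^n\setminus\R^k$ to the already-established injectivity on $\R^N\setminus\{0\}$ with $N=n-k$, by taking a partial Fourier (or Fourier--Helgason) transform in the tangential variable $y\in\R^k$. Writing $w=w(x,y)$ with $x\in\R^{n-k}$, $y\in\R^k$, I would decompose $w$ into its tangential Fourier modes: formally $w(x,y)=\int_{\R^k}\hat w(x,\xi)e^{iy\cdot\xi}\,d\xi$. Since the coefficient $pu_1^{p-1}=pu_1^{p-1}(|x|)$ depends only on $x$, the equation $\mathbb{L}_1 w=\D^2_{x,y}w-pu_1^{p-1}w=0$ transforms mode-by-mode into
\begin{align}\notag
\D^2_x\hat w(\cdot,\xi)-2|\xi|^2\D_x\hat w(\cdot,\xi)+|\xi|^4\hat w(\cdot,\xi)-pu_1^{p-1}\hat w(\cdot,\xi)=0,
\end{align}
i.e. $(\D_x-|\xi|^2)^2\hat w(\cdot,\xi)=pu_1^{p-1}\hat w(\cdot,\xi)$ on $\R^{n-k}\setminus\{0\}$. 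For $\xi=0$ this is exactly $L_1\hat w(\cdot,0)=0$, so Proposition \ref{inj-prop-1} forces $\hat w(\cdot,0)\equiv0$. For $\xi\neq0$ the extra term $|\xi|^2>0$ should only improve the situation: I would rerun the Rellich--Pohozaev/integration-by-parts identity \eqref{byparts25} (summed over the spherical harmonic modes $\vp_j$ in the $x$-variable) and check that the zeroth-order term picks up a manifestly favorable contribution from $|\xi|^2$, so the same spectral-gap inequalities ($\bar C(N,j)<1$, etc.) continue to hold and in fact become strict; combined with the ODE/maximum-principle arguments for the low modes $j=0,1,\dots,N$ (now with an extra positive potential $|\xi|^2$, which only helps the maximum principle), this yields $\hat w(\cdot,\xi)\equiv0$ for every $\xi$, hence $w\equiv0$.

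An alternative, and probably cleaner, route — and the one I would actually write down following \cite{Ao1} — avoids the literal Fourier transform and instead uses a scaling/blow-up together with the $\R^N\setminus\{0\}$ result as a rigidity statement. One observes that $\mathbb{L}_1$ is, after the conformal change $\R^n\setminus\R^k\cong \mathbb S^{n-k-1}\times\HH^{k+1}$, translation-invariant in the hyperbolic directions, so a nonzero kernel element in $C^{4,\alpha}_{\mu,0}$ can be translated and rescaled to produce, in the limit, a nonzero solution of $L_1 v=0$ on $\R^{n-k}\setminus\{0\}$ lying in the space $C^{4,\alpha}_{\mu,0}(\R^{n-k}\setminus\{0\})$ (the weight $\mu$ chosen in \eqref{mu-nu} with $N$ replaced by $n-k$ is precisely what makes this limit land in the right space), contradicting Proposition \ref{inj-prop-1}. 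The key technical points are: (i) the a priori bounds encoded in the $C^{4,\alpha}_{\mu,0}$-norm are scale-invariant in the right way so that the blow-up limit exists with elliptic estimates up to the singular set; (ii) the limiting solution is genuinely nonzero, which requires a lower-bound/concentration argument showing the $\mu$-weighted norm near $\R^k$ cannot be carried entirely by a single mode that disappears in the limit.

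\textbf{Main obstacle.} The crux is handling the tangential/hyperbolic directions rigorously: in the Fourier approach one must justify the mode-by-mode decomposition in the weighted Hölder spaces (the transform is most natural in $L^2$, and one needs to transfer pointwise/weighted control), and one must verify that for \emph{all} $\xi\neq0$ — including small $\xi$, where the perturbation is close to the borderline $\xi=0$ case — the integral inequalities and the delicate low-mode arguments ($j=1,\dots,N$, using monotonicity of $u_1$, $u_1'$, $\D u_1$ and the maximum principle on a half-space as in the proof of Proposition \ref{inj-prop-1}) still close. In the blow-up approach the analogous difficulty is proving non-degeneracy of the limit, i.e. that the concentration of the weighted norm near $\Sigma$ survives the rescaling. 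I expect the \cite{Ao1}-style argument to package this as: indicial roots of $\mathbb{L}_1$ across $\R^k$ coincide with those of $L_1$ across $\{0\}$ in $\R^{n-k}$ (an ODE computation already recorded), plus a Fredholm/deformation argument reducing surjectivity-type statements to the model, so that the heart of the matter is exactly the injectivity of $L_1$ on $\R^{n-k}\setminus\{0\}$ that Propositions \ref{inj-prop-1}--\ref{inj-prop-2} already supply.
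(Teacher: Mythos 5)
The paper's proof is a direct reduction that uses the two lemmas just preceding the proposition. Conformally compactify $\R^N\setminus\{0\}$ to the cylinder $\R\times\mathbb S^{N-1}$ and $\R^n\setminus\R^k$ to $\mathbb S^{N-1}\times\HH^{k+1}$ (with $N=n-k$); decompose in spherical harmonics $\vp_j$ on $\mathbb S^{N-1}$ in both; then apply the Fourier transform on $\R$ (cylinder case) and the Fourier--Helgason transform on $\HH^{k+1}$ (hyperbolic case). The point is that the symbol \eqref{symbol-1} and the symbol \eqref{symbol-2} are \emph{identical} as functions of the respective transform parameters $\xi$ and $\lambda$. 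Since the potential $pu_1^{p-1}$ also depends only on $r$ and so transforms in the same way, the transformed equation for each $(j,\omega)$-mode is literally the same equation as in Proposition~\ref{inj-prop-1}. There is no perturbation to control and no case distinction to make; the proposition is an immediate consequence of Proposition~\ref{inj-prop-1}.

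Your first route (partial Fourier transform in the flat tangential variable $y\in\R^k$) is not what the paper does, and it has two genuine gaps. After transforming you face $(\D_x-|\xi|^2)^2\hat w=pu_1^{p-1}\hat w$ on $\R^{n-k}\setminus\{0\}$, a genuine perturbation of $L_1$ for $\xi\neq0$, and you would then have to re-run both the spectral-gap inequalities and — much more delicately — the half-space maximum-principle argument for $j=1,\dots,N$ with this perturbed operator; you have not shown this closes. More seriously: (a) the Fourier transform in $y$ is not naturally available on $C^{4,\alpha}_{\mu,0}(\R^n\setminus\R^k)$ — $w(x,\cdot)$ is only bounded (in fact may grow like $\log$), so $\hat w(\cdot,\xi)$ is a distribution in $\xi$ and the mode-by-mode argument is not pointwise; (b) in the weight definition of $C^{4,\alpha}_{\mu,0}(\R^n\setminus\R^k)$ the ``far'' region mixes $|x|\to\infty$ with $|y|\to\infty$, and $|y|$ is dual to $\xi$, so the decay of $\hat w(\cdot,\xi)$ in $|x|$ that you would need for the integration-by-parts identities does not decouple from $\xi$. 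The paper avoids both issues by transforming on $\HH^{k+1}$, which packages $r=|x|$ together with $y$; note that the Fourier--Helgason transform lives on $\HH^{k+1}$, \emph{not} on $\R^k$ as you wrote in your opening sentence.

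Your second route (blow-up using translation invariance in the hyperbolic directions) is in the spirit of arguments the paper does use elsewhere (Lemmas~\ref{7.3} and~\ref{inj-omega}), but not here, and as you correctly flag its crux — non-degeneracy of the limit — is left open. Your closing paragraph correctly guesses that the heart of the matter is Propositions~\ref{inj-prop-1}--\ref{inj-prop-2}, but the reduction mechanism is not an indicial-root comparison or a Fredholm/deformation argument: it is the exact coincidence of the Fourier and Fourier--Helgason symbols \eqref{symbol-1}--\eqref{symbol-2}, which makes the two injectivity problems literally the same problem in the transform variable.
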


  In order to prove the above proposition we shall use our previous injectivity results on $\R^N\setminus\{0\}$. The idea is to show that both operators have the same symbol. To be more precise, we write the Euclidean metric in $\R^N$   as $$|dx|^2=dr^2+r^2 g_{\mathbb  S^{N-1}}.$$ We consider the conformal change $$g_0:=\frac{1}{r^2}|dx|^2=dt^2+g_{\mathbb S^{N-1}},\quad r=e^{-t},$$ which is a complete metric on the cylinder $\R\times \mathbb S^{N-1}$. Then the conformal Laplacian $P^{g_0}_{\gamma}$ of order $2\gamma$ with $0<\gamma<\frac N2$ is given by $$P^{g_0}_\gamma w=r^\frac{N+2\gamma}{2}(-\D)^\gamma u,\quad u:=r^{-\frac{N-2\gamma}{2}}w. $$  In the following we shall use the following normalization  on the definition of Fourier transformation on $\R$: $$\hat w(\xi):=\frac{1}{\sqrt{2\pi}}\int_{\R}e^{-i\xi t}w(t)dt.$$

The following lemma can be found in \cite{mar}. 
  
  \begin{lem}Let $P^{j}_\gamma$ be the projection of the operator $P^{g_0}_\gamma$ on the eigenspace $\langle\vp_j\rangle$. Then,  writing $w(t,\theta)=\sum_{j=0}^\infty w_j(t)\vp_j(\theta)$ we have  $$\widehat{P^j_\gamma w_j}=\Theta^j_\gamma(\xi)\hat w_j,$$ where the Fourier symbol is given by \begin{align}\label{symbol-1}\Theta^j_\gamma(\xi)=2^{2\gamma}\frac{|\Gamma(\frac12+\frac\gamma2+\frac12\sqrt{(\frac N2-1)^2+\lambda_j}+\frac\xi2 i)|^2}{|\Gamma(\frac12-\frac\gamma2+\frac12\sqrt{(\frac N2-1)^2+\lambda_j}+\frac\xi2 i)|^2}.\end{align}  \end{lem}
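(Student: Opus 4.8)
\emph{Proof strategy.} The plan is to use that, by its very definition $P^{g_0}_\gamma w=r^{\frac{N+2\gamma}{2}}(-\D)^\gamma\big(r^{-\frac{N-2\gamma}{2}}w\big)$, the operator $P^{g_0}_\gamma$ is nothing but the flat fractional Laplacian $(-\D)^\gamma$ on $\R^N$ conjugated by powers of $r=e^{-t}$, and then to read off its Fourier symbol from the action of $(-\D)^\gamma$ on a single model function. First I would record the symmetries: the translation $t\mapsto t+a$ on the cylinder corresponds to the dilation $x\mapsto e^{-a}x$ on $\R^N\setminus\{0\}$, under which $(-\D)^\gamma$ is homogeneous of degree $-2\gamma$; together with the rotational invariance of $(-\D)^\gamma$, the conjugation by $r$-powers then makes $P^{g_0}_\gamma$ commute with $t$-translations and with the isometries of $\mathbb S^{N-1}$. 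Consequently, in the spherical-harmonic decomposition $w(t,\theta)=\sum_j w_j(t)\vp_j(\theta)$ the operator acts block-diagonally, each block $P^j_\gamma$ is translation invariant in $t$, hence a Fourier multiplier $\widehat{P^j_\gamma w_j}=\Theta^j_\gamma(\xi)\hat w_j$ for some $\Theta^j_\gamma$.

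Next I would compute $\Theta^j_\gamma(\xi)$ by feeding in the generalized eigenfunction $w(t,\theta)=e^{i\xi t}\vp_j(\theta)=r^{-i\xi}\vp_j(\theta)$. With $u:=r^{-\frac{N-2\gamma}{2}}w=|x|^{a}\vp_j(x/|x|)$ and $a:=-\frac{N-2\gamma}{2}-i\xi$, the task reduces to the fractional Laplacian of a homogeneous power times a spherical harmonic. Writing $\ell=\ell(j)$ for the degree of $\vp_j$, so that $\lambda_j=\ell(\ell+N-2)$ and $\ell+\tfrac{N-2}{2}=\sqrt{(\tfrac N2-1)^2+\lambda_j}$, the key identity I would establish is
\[
(-\D)^\gamma\big(|x|^{a}\vp_j(x/|x|)\big)=2^{2\gamma}\,\frac{\Gamma\big(\tfrac{\ell-a}{2}+\gamma\big)\,\Gamma\big(\tfrac{\ell+a+N}{2}\big)}{\Gamma\big(\tfrac{\ell-a}{2}\big)\,\Gamma\big(\tfrac{\ell+a+N}{2}-\gamma\big)}\;|x|^{a-2\gamma}\vp_j(x/|x|).
\]
For $\gamma=1$ this degenerates, via $\Gamma(z+1)/\Gamma(z)=z$, to $(\ell-a)(\ell+a+N-2)\,|x|^{a-2}\vp_j$, which is the familiar formula for the Laplacian and serves as a sanity check.

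Substituting $a=-\frac{N-2\gamma}{2}-i\xi$ and $\ell=\sqrt{(\tfrac N2-1)^2+\lambda_j}-\tfrac{N-2}{2}$, the four Gamma-arguments collapse to $\tfrac12\pm\tfrac\gamma2+\tfrac12\sqrt{(\tfrac N2-1)^2+\lambda_j}\pm\tfrac\xi2 i$, and the two ``$-i\xi$'' ones are the complex conjugates of the corresponding ``$+i\xi$'' ones (their real parts being real); since $\overline{\Gamma(z)}=\Gamma(\bar z)$, the numerator becomes $\big|\Gamma(\tfrac12+\tfrac\gamma2+\tfrac12\sqrt{(\tfrac N2-1)^2+\lambda_j}+\tfrac\xi2 i)\big|^2$ and the denominator $\big|\Gamma(\tfrac12-\tfrac\gamma2+\tfrac12\sqrt{(\tfrac N2-1)^2+\lambda_j}+\tfrac\xi2 i)\big|^2$. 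Finally, since $|x|^{a-2\gamma}=r^{a-2\gamma}$ and $\tfrac{N+2\gamma}{2}+(a-2\gamma)=-i\xi$, one gets $P^j_\gamma w=r^{\frac{N+2\gamma}{2}}(-\D)^\gamma u=\Theta^j_\gamma(\xi)\,r^{-i\xi}\vp_j=\Theta^j_\gamma(\xi)\,w$ with exactly the $\Theta^j_\gamma(\xi)$ of \eqref{symbol-1}.

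The hard part is the model identity for $(-\D)^\gamma$ on $|x|^a\vp_j(x/|x|)$ with its explicit Gamma quotient; everything else is bookkeeping. I would obtain it by one of the standard routes: (i) insert the singular-integral representation of $(-\D)^\gamma$ and apply the Funk--Hecke formula to perform the angular integration against $\vp_j$, reducing to a Gauss hypergeometric / Beta integral in the radial variable; or (ii) use the Mellin transform, which diagonalizes the dilation-homogeneous operator $(-\D)^\gamma$ and whose symbol on the $\ell$-th spherical-harmonic block is precisely this Gamma ratio; or (iii) use the Caffarelli--Silvestre extension, solving the degenerate elliptic extension problem blockwise (the radial profile solves a hypergeometric ODE whose Neumann datum yields the quotient). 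A technical point worth flagging is that $e^{i\xi t}\vp_j$ is neither $L^2$ nor Schwartz, so I would first prove the identity for real $a$ in the range where the integrals converge and then extend to complex $a$, hence to all $\xi\in\R$, by analytic continuation, reading the multiplier statement on each block in the appropriate tempered-distribution sense. Since this lemma is exactly the content of \cite{mar}, one may alternatively just cite that reference; the above sketch records a self-contained argument.
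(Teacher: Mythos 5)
Your proposal is correct, and it does more than the paper, which offers no proof at all for this lemma and simply refers to \cite{mar}. Your route --- conjugating $(-\D)^\gamma$ by powers of $r$, noting that $P^{g_0}_\gamma$ then commutes with $t$-translations and rotations so that each spherical-harmonic block is a Fourier multiplier, and evaluating the multiplier on $e^{i\xi t}\vp_j=r^{-i\xi}\vp_j$ via the Gamma-quotient identity for $(-\D)^\gamma\bigl(|x|^a\vp_j(x/|x|)\bigr)$ --- is exactly the standard derivation that underlies the cited reference. I checked your bookkeeping: with $a=-\frac{N-2\gamma}{2}-i\xi$ and $\ell+\frac{N-2}{2}=\sqrt{(\frac N2-1)^2+\lambda_j}$ the four Gamma-arguments are $\frac12\pm\frac\gamma2+\frac12\sqrt{(\frac N2-1)^2+\lambda_j}\pm\frac\xi2 i$ in conjugate pairs, the exponent count $\frac{N+2\gamma}{2}+(a-2\gamma)=-i\xi$ is right, and the $\gamma=1$ sanity check $(\ell-a)(\ell+a+N-2)$ is the correct second-order formula. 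The only substantive content you defer is the model identity itself; any of the three routes you name (Funk--Hecke plus a Beta integral, Mellin transform, or the extension problem) does the job, and the analytic-continuation remark is the right way to handle the fact that $r^a\vp_j$ is not in $L^2$ for purely oscillatory exponents. In short: the paper buys brevity by citation; your argument buys self-containedness at the cost of importing one classical identity about the fractional Laplacian on homogeneous harmonics.
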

  
  Now we move on to the case when the singularity is along $\R^k$.   For a point $z=(x,y)\in\R^n\setminus\R^k$ we shall use the following notations: $x\in \R^N$, $y\in \R^k$ where $\R^n=\R^N\times\R^k$. We shall  also write $z=(x,y)=(r,\theta,y)$ where $r=|x|$ and $\theta\in \mathbb S^{N-1}$.   Then the Euclidean metric on $\R^n$ can be written as $$|dz|^2=|dx|^2+|dy|^2=dr^2+r^2g_{\mathbb S^{N-1}}+dy^2.$$ Now we consider the conformal metric $$g_k:=\frac{1}{r^2}|dz|^2=g_{\mathbb S^{N-1}}+\frac{dr^2+dy^2}{r^2}=g_{\mathbb S^{N-1}}+g_{\HH^{k+1}},$$ where $\HH^{k+1}$ is the Hyperbolic space.  The conformal Laplacian is given by $$P^{g_k}_\gamma w=r^\frac{n+2\gamma}{2}(-\D)^\gamma u,\quad u=r^{-\frac{n-2\gamma}{2}}w.$$ For a function $w$ on $\mathbb S^{N-1}\times\HH^{k+1}$ we decompose it as $w(\theta,\zeta)=\sum_{j=0}^\infty w_j(\zeta)\vp_j(\theta),$ with $\zeta\in\HH^{k+1}$.
  
The next lemma can be found in \cite{Ao1}. 
  \begin{lem}  Let $P^j_\gamma$ be the projection of the operator $P^{g_k}_\gamma$ on the eigenspace  $\langle \vp_j\rangle $. Then    \begin{align}\label{symbol-2}\widehat{P^j_\gamma w_j}=\Theta^j_\gamma(\lambda)\hat w_j,\quad \Theta^j_\gamma(\lambda)=2^{2\gamma}\frac{|\Gamma(\frac12+\frac\gamma2+\frac12\sqrt{(\frac N2-1)^2+\lambda_j}+\frac\lambda2 i)|^2}{|\Gamma(\frac12-\frac\gamma2+\frac12\sqrt{(\frac N2-1)^2+\lambda_j}+\frac\lambda2 i)|^2},\end{align}where $\,\widehat{\cdot} \,$ denotes the Fourier-Helgason transform on $\HH^{k+1}$. \end{lem}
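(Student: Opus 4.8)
\textbf{Proof proposal for the symbol formula \eqref{symbol-2}.}
The plan is to reduce the computation of $\Theta^j_\gamma(\lambda)$ on $\mathbb{S}^{N-1}\times\HH^{k+1}$ to a one–dimensional spectral computation in the Fourier–Helgason variable $\lambda$, exactly paralleling the cylinder case \eqref{symbol-1}. First I would recall that the conformal Laplacian $P^{g_k}_\gamma$ of order $2\gamma$ on the Einstein-type manifold $M_0:=\mathbb{S}^{N-1}\times\HH^{k+1}$ (with $g_k=g_{\mathbb{S}^{N-1}}+g_{\HH^{k+1}}$) can be expressed in closed form via the meromorphic family of intertwining operators: for an Einstein manifold, or more precisely for a product of an Einstein sphere with hyperbolic space, one has an explicit functional-calculus formula
$$
P^{g_k}_\gamma=\frac{\Gamma\!\bigl(B_\gamma+\tfrac12+\tfrac\gamma2\bigr)}{\Gamma\!\bigl(B_\gamma+\tfrac12-\tfrac\gamma2\bigr)},\qquad
B_\gamma:=\sqrt{-\D_{\HH^{k+1}}-\bigl(\tfrac k2\bigr)^2+\bigl(\tfrac N2-1\bigr)^2-\D_{\mathbb{S}^{N-1}}+\tfrac14}\,,
$$
up to the normalizing constant $2^{2\gamma}$; this is the content of the formula recorded in \cite{mar} in the cylinder case and in \cite{Ao1} in the present case, so I would simply invoke it. The key point is that this operator commutes with the Laplacians of both factors, so it acts diagonally on the joint spectral decomposition.

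The second step is to diagonalize each factor. On the sphere factor I decompose $w(\theta,\zeta)=\sum_{j\ge 0} w_j(\zeta)\vp_j(\theta)$ into spherical harmonics, replacing $-\D_{\mathbb{S}^{N-1}}$ by its eigenvalue $\lambda_j$; this reduces $P^{g_k}_\gamma$ to the projected operators $P^j_\gamma$ acting on functions $w_j$ on $\HH^{k+1}$ alone. On the hyperbolic factor I apply the Fourier–Helgason transform, which simultaneously diagonalizes the Laplace–Beltrami operator $-\D_{\HH^{k+1}}$: under $\widehat{\,\cdot\,}$ the operator $-\D_{\HH^{k+1}}$ becomes multiplication by $\lambda^2+(k/2)^2$ (with the spectral parameter $\lambda\in\R$ normalized so that the bottom of the spectrum is $(k/2)^2$). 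Substituting both eigenvalues into the functional-calculus expression, the argument of $B_\gamma$ becomes a constant: $-\D_{\HH^{k+1}}-(k/2)^2$ transforms to $\lambda^2$, so
$$
B_\gamma\ \longmapsto\ \sqrt{\lambda^2+\bigl(\tfrac N2-1\bigr)^2+\lambda_j+\tfrac14}\,,
$$
and hence $\widehat{P^j_\gamma w_j}=\Theta^j_\gamma(\lambda)\,\hat w_j$ with $\Theta^j_\gamma(\lambda)=2^{2\gamma}\,\Gamma(\tfrac12+\tfrac\gamma2+B)\,/\,\Gamma(\tfrac12-\tfrac\gamma2+B)$ where $B=\tfrac12\sqrt{(\tfrac N2-1)^2+\lambda_j+\lambda^2}$ after completing the square inside the radical. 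Rewriting $\Gamma(\tfrac12+\tfrac\gamma2+B)=\Gamma(\tfrac12+\tfrac\gamma2+\tfrac12\sqrt{(\tfrac N2-1)^2+\lambda_j}+\tfrac\lambda2 i)$ via the identity $|\Gamma(a+bi)|^2=\Gamma(a+bi)\Gamma(a-bi)$ — valid since $\sqrt{\lambda^2+c^2}$ and $\sqrt{c^2}+\lambda i$ produce the same value of $|\Gamma(\tfrac12\pm\tfrac\gamma2+\cdots)|^2$ once one expands using the relation $\Gamma(z)\Gamma(\bar z)=|\Gamma(z)|^2$ — yields precisely \eqref{symbol-2}.

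The main obstacle, and the step requiring genuine care rather than bookkeeping, is justifying the closed-form functional-calculus expression for $P^{g_k}_\gamma$ on the product $\mathbb{S}^{N-1}\times\HH^{k+1}$ and in particular tracking the correct normalization $2^{2\gamma}$ and the correct constant inside the square root. One route is to use the known behaviour of the GJMS/scattering operators under the conformal change $g_k=r^{-2}|dz|^2$: since $(-\D)^\gamma$ on $\R^n=\R^N\times\R^k$ with the flat metric is itself a pure power of the Laplacian, its pullback under $r=e^{-t}$-type coordinates to $\HH^{k+1}$ combined with the separation of variables on $\mathbb{S}^{N-1}$ must match the Chang–González / scattering description of the fractional conformal Laplacian on the Poincaré–Einstein filling; the ratio of Gamma functions is exactly the scattering matrix of $\HH^{k+1}$ twisted by the spherical eigenvalue. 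I would carry this out by first verifying the $k=0$ case against \eqref{symbol-1} (there $\HH^{1}=\R$ and the Fourier–Helgason transform is the ordinary Fourier transform, so \eqref{symbol-2} must literally reduce to \eqref{symbol-1}), which pins down all constants, and then invoking the cited references \cite{Ao1,mar} for the general $k$; the remaining computation — substituting eigenvalues and rewriting in terms of $|\Gamma(\cdot)|^2$ — is routine. Since both the cylinder and the $\mathbb{S}^{N-1}\times\HH^{k+1}$ statements are quoted from \cite{mar} and \cite{Ao1} respectively, in the paper itself one would present this as a citation with a short indication of the mechanism, which is what the excerpt does.
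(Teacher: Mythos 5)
The paper gives no proof of this lemma: it is quoted verbatim from \cite{Ao1} with the single sentence ``The next lemma can be found in \cite{Ao1}'', so there is no in-paper argument to compare against, and your closing remark --- that one would present this as a citation with a short indication of the mechanism --- is exactly what the authors do. Your indication of the mechanism is also the right one in outline: decompose in spherical harmonics so that $-\D_{\mathbb S^{N-1}}$ becomes $\lambda_j$, apply the Fourier--Helgason transform so that $-\D_{\HH^{k+1}}$ becomes $\lambda^2+(k/2)^2$, and observe that the resulting scalar symbol must coincide with the cylinder symbol \eqref{symbol-1} with $\xi$ replaced by $\lambda$, since both computations diagonalize the same flat operator $(-\D)^\gamma$ after the conformal change $g_k=r^{-2}|dz|^2$; checking the $k=0$ case against \eqref{symbol-1} to pin down the constant $2^{2\gamma}$ is a sensible sanity check.

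The one step you should not present as a proof is the closed-form ``functional calculus'' identity $P^{g_k}_\gamma=\Gamma(B_\gamma+\tfrac12+\tfrac\gamma2)/\Gamma(B_\gamma+\tfrac12-\tfrac\gamma2)$ with $B_\gamma$ a single real square-root operator, together with the subsequent ``completing the square''. After the spectral substitution, $\Gamma$ evaluated at the real number $\tfrac12+\tfrac\gamma2+\sqrt{\lambda^2+(\tfrac N2-1)^2+\lambda_j+\tfrac14}$ is simply a different function of $\lambda$ from $|\Gamma(\tfrac12+\tfrac\gamma2+\tfrac12\sqrt{(\tfrac N2-1)^2+\lambda_j}+\tfrac\lambda2 i)|^2$; no algebraic manipulation converts $\sqrt{\lambda^2+c^2}$ into $\sqrt{c^2}+\lambda i$ inside the Gamma function, and the identity $|\Gamma(a+bi)|^2=\Gamma(a+bi)\Gamma(a-bi)$ does not bridge the two. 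The correct derivation (as carried out in \cite{Ao1}, and in \cite{mar} for $k=0$) produces the two complex-conjugate Gamma factors $\Gamma(a+\tfrac\lambda2 i)\Gamma(a-\tfrac\lambda2 i)$ from the outset, via the explicit kernel of $(-\D)^\gamma$ restricted to each spherical mode (a Mellin/hypergeometric computation), not via functional calculus of a self-adjoint square root; note also that $\mathbb S^{N-1}\times\HH^{k+1}$ is not Einstein, so a Gover-type product formula is not available off the shelf. Since the lemma is in any case being imported from \cite{Ao1}, this does not sink your proposal, but the intermediate formula is wrong as written and should be replaced by the citation it is meant to paraphrase.
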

  
    \medskip 
    
    \noindent\emph{Proof of Proposition \ref{inj-prop-3}} As we mentioned before, we shall use Proposition \ref{inj-prop-1}. Let $\phi$ be a solution to $$\D^2\phi-pu_1^{p-1}\phi=0\quad\text{in }\R^n\setminus\R^k.$$ We set $w=r^{-\frac{n+2\gamma}{2}}\phi$. Let $w_j$ be the projection of $w$ on the eigenspace $\langle \vp_j\rangle$.  Let $\hat w_j(\lambda,\omega)$ be the Fourier-Helgason transform of $w_j$, $(\lambda,\omega)\in\R\times \mathbb S^{N-1}$. As the symbol \eqref{symbol-2} coincides with the symbol \eqref{symbol-1} for every $\omega\in \mathbb S^{N-1}$, our problem is equivalent to that of Proposition \ref{inj-prop-1}. This concludes the proof. 
    \hfill $\square$
    
In a similar way, using Proposition \ref{inj-prop-2} one can prove the following Proposition: 

\begin{prop}  \label{inj-prop-4} Solutions to $$\D^2 w-\frac{ A_p}{r^4}w=0\quad\text{in }\R^n\setminus\R^k,$$  are trivial in the space $ w\in C^{4,\alpha}_{\mu,\mu}(\R^n\setminus\R^k).$\end{prop}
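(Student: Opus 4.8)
The plan is to reduce Proposition \ref{inj-prop-4} to Proposition \ref{inj-prop-2} by exactly the same conformal/Fourier--Helgason argument used in the proof of Proposition \ref{inj-prop-3}. The key observation is that the operator $\D^2 - A_p r^{-4}$ is the ``model at the origin'' obtained by freezing the potential $V_p(r) = p r^4 u_1^{p-1}(r)$ at its limiting value $A_p = p\,k(p,N) = \lim_{r\to 0} V_p(r)$ (see \eqref{Ap}), and this operator has the same indicial structure and, more importantly, the same Fourier symbol whether one works on $\R^N\setminus\{0\}$ or on $\R^n\setminus\R^k$.

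First I would take a solution $w \in C^{4,\alpha}_{\mu,\mu}(\R^n\setminus\R^k)$ of $\D^2 w - A_p r^{-4} w = 0$. Following the recipe of Section 4, I would conformally change the metric: writing $|dz|^2 = dr^2 + r^2 g_{\mathbb S^{N-1}} + dy^2$ on $\R^n = \R^N\times\R^k$ and passing to $g_k = r^{-2}|dz|^2 = g_{\mathbb S^{N-1}} + g_{\HH^{k+1}}$, one has $\D^2 - A_p r^{-4} = r^{-\frac{n+8}{2}}\bigl(\text{(fourth order operator on } \mathbb S^{N-1}\times\HH^{k+1}) \bigr) r^{\frac{n-8}{2}}$, i.e. the fourth-order GJMS-type operator $P^{g_k}_2$ on the product manifold, modified by the constant multiple of the identity coming from the $A_p r^{-4}$ term (the constant being $A_p$ times an explicit constant from the conformal factor). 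Then I would decompose $w = \sum_j w_j(\zeta)\vp_j(\theta)$ in spherical harmonics on $\mathbb S^{N-1}$ and take the Fourier--Helgason transform on $\HH^{k+1}$ of each $w_j$, writing $\hat w_j(\lambda,\omega)$ with $(\lambda,\omega)\in\R\times\mathbb S^{N-1}$. As noted for \eqref{symbol-2}, the resulting Fourier symbol $\Theta^j_2(\lambda)$ depends on $\lambda$ only (not on $\omega$), and it coincides identically with the symbol for $P^{g_0}_2$ on the cylinder $\R\times\mathbb S^{N-1}$ from \eqref{symbol-1}; the added constant term shifts both symbols by the same constant $A_p$. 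Hence the transformed equation for each $w_j$ is, frequency-by-frequency, exactly the transformed equation appearing in Proposition \ref{inj-prop-2} on $\R^N\setminus\{0\}$, so the multiplier equation $\bigl(\Theta^j_2(\lambda) - A_p\cdot\text{const}\bigr)\hat w_j(\lambda,\omega) = 0$ forces $\hat w_j \equiv 0$ on the support allowed by the weighted space, exactly as in the $\R^N\setminus\{0\}$ case. Concretely: the weight condition $w\in C^{4,\alpha}_{\mu,\mu}$ together with $\mu + \nu = 4-N$ and the indicial root bounds \eqref{indicial18}, \eqref{mu-nu} means that no homogeneous solution $r^{\gamma_j^{\pm\pm}}\vp_j$ is admissible (it fails to be $O(r^\mu)$ simultaneously at $0$ and at $\infty$), so $w\equiv 0$.

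The main step to get right — and the only genuine obstacle — is verifying that the symbol identity used in the proof of Proposition \ref{inj-prop-3} carries over verbatim when $u_1^{p-1}$ is replaced by the pure inverse-fourth-power potential $A_p r^{-4}$. This is in fact cleaner in the present case: because $A_p r^{-4}$ is exactly conformally covariant (it transforms to a constant under $g_k = r^{-2}|dz|^2$, unlike the genuine potential $u_1^{p-1}$ which only does so asymptotically), the conjugated operator on $\mathbb S^{N-1}\times\HH^{k+1}$ is literally $P^{g_k}_2$ plus a constant multiple of the identity, and the same for $P^{g_0}_2$ on the cylinder; since $P^{g_k}_2$ and $P^{g_0}_2$ have the same Fourier symbol \eqref{symbol-2}=\eqref{symbol-1}, so do the shifted operators. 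Therefore the injectivity problem on $\R^n\setminus\R^k$ is, projection-by-projection and frequency-by-frequency, identical to the one on $\R^N\setminus\{0\}$ already settled in Proposition \ref{inj-prop-2}, and the same weighted-space obstruction kills all modes. I would close the argument by remarking, exactly as at the end of the proof of Proposition \ref{inj-prop-3}, that ``since the symbol \eqref{symbol-2} coincides with \eqref{symbol-1} for every $\omega\in\mathbb S^{N-1}$, the problem reduces to Proposition \ref{inj-prop-2}, hence $w\equiv 0$.''
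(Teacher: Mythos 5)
Your proposal is correct and follows exactly the route the paper has in mind: the paper disposes of Proposition \ref{inj-prop-4} with the single sentence ``In a similar way, using Proposition \ref{inj-prop-2} one can prove the following,'' and what you have written is precisely that argument spelled out --- conformal change $g_k = r^{-2}|dz|^2$ to $\mathbb S^{N-1}\times\HH^{k+1}$, spherical-harmonic decomposition on $\mathbb S^{N-1}$, Fourier--Helgason transform on $\HH^{k+1}$, identification of the symbol \eqref{symbol-2} with \eqref{symbol-1}, and finally the weighted-space/indicial-root obstruction from Proposition \ref{inj-prop-2}. Your remark that the argument is in fact \emph{cleaner} here than in Proposition \ref{inj-prop-3} is a genuinely useful observation: the constant-coefficient potential $A_p r^{-4}$ is exactly conformally covariant (it conjugates to a constant shift of $P^{g_k}_2$), so the conjugated operator is a true Fourier multiplier, whereas for $p\,u_1^{p-1}$ this is only asymptotic. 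One small slip: the conformal conjugation exponents should be $r^{-\frac{n+4}{2}}$ and $r^{\frac{n-4}{2}}$ (from $P^{g_k}_\gamma w = r^{\frac{n+2\gamma}{2}}(-\D)^\gamma u$ with $u = r^{-\frac{n-2\gamma}{2}}w$ and $\gamma=2$), not $r^{-\frac{n+8}{2}}$ and $r^{\frac{n-8}{2}}$; the identity one wants is $\D^2 - A_p r^{-4} = r^{-\frac{n+4}{2}}\left(P^{g_k}_2 - A_p\right)r^{\frac{n-4}{2}}$, and it is worth double-checking that the cross terms really do produce the bare constant $A_p$. This does not affect the structure of the argument.
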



  \section{Injectivity of $L_{\bar\ve}$ on $C^{4,\alpha}_{\mu,\mathscr{N}}(\Omega\setminus\Sigma)$} In this section we study injectivity of the linearized operator  $$L_{\bar\ve}w:=\D^2w-pu_{\bar\ve}^{p-1}w.$$
  We shall use the following notations:  
  $$\Omega_{\bar \ve}:=\Omega\setminus \cup _{i=i}^KB_{\ve_i}(\Sigma_i),\quad f^+:=\max\{f,0\},\quad f^-:=\min\{f,0\}.$$
    \begin{lem} There exists $\ve_0>0$ such that if $\ve_i<\ve_0$ for every $i$, then after a suitable normalization of $u_1$, the operator $L_{\bar \ve}$ satisfies maximum principle in $\Omega_{\bar\ve}$, that is \begin{align*}  \left\{\begin{array}{ll} L_{\bar\ve}w\geq 0&\quad\text{in }\Omega_{\bar\ve} \\ w\geq0&\quad\text{on }\partial \Omega_{\bar \ve} \\ \D w\leq0&\quad\text{on }\partial \Omega_{\bar \ve}  \end{array}\right.\quad     \Longrightarrow \quad w\geq 0\text{ and }\D w \leq0\text{ in }\Omega_{\bar\ve}.\end{align*} \end{lem}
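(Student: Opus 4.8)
The plan is to build a comparison (super-)solution for $L_{\bar\ve}$ that is positive together with its negative Laplacian on the annular region near each $\Sigma_i$, and to combine it with the good sign properties of the Paneitz-type operator far from $\Sigma$. The key observation is that $u_{\bar\ve}$ is a sum of dilated copies of $u_1$, and near $\Sigma_i$ the coefficient $p\,u_{\bar\ve}^{p-1}$ is, up to lower order terms, $p\,u_{\ve_i}^{p-1}(x_i)\approx A_p\,|x-x_i|^{-4}$ in the relevant scale, which is exactly the potential for which Propositions~\ref{inj-prop-2} and~\ref{inj-prop-4} apply. So I would first reduce, by the usual partition-of-unity / patching argument, to the model problem $\D^2 - p u_1^{p-1}$ on $\R^N\setminus B_1$ (respectively on $\R^n\setminus\mathcal B_1$), rescaled by $\ve_i$.

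\textbf{Main steps.} First I would record the pointwise facts about the model solution $u_1$ proved in the appendix and used already in the proof of Proposition~\ref{inj-prop-1}: $u_1>0$, $u_1'<0$, $\D u_1<0$, $(\D u_1)'>0$ on $(0,\infty)$, together with the matching asymptotics at $0$ (rate $r^{-4/(p-1)}$) and at infinity (rate $r^{4-N}$). These give a genuine positive barrier $\Phi_{\bar\ve}$ of the form $\sum_i \chi_R(\cdot - x_i)\,\ve_i^{\gamma}\,\psi(\cdot/\ve_i)$, where $\psi$ is chosen comparable to $u_1$ (or a slightly shifted power $r^{-4/(p-1)+\delta}$) so that $L_{\bar\ve}\Phi_{\bar\ve}\ge 0$, $\Phi_{\bar\ve}>0$, $\D\Phi_{\bar\ve}<0$ near $\Sigma$; away from $\Sigma$ the potential $p u_{\bar\ve}^{p-1}$ is $O(\ve_0^{N-\frac{4p}{p-1}})$ small in $C^{0,\alpha}_{-4}$ (this is exactly the error estimate from Section~\ref{section-higher}, together with the remark that $u_\ve\to 0$ locally uniformly), so $\D^2$ dominates and a constant (or the first Navier eigenfunction of $\D^2$ on $\Omega$) serves as barrier there. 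Second, I would run the standard two-step maximum principle for the factorized operator $\D^2 = \D\circ\D$: given $w$ with $L_{\bar\ve}w\ge 0$, $w\ge 0$, $\D w\le 0$ on $\partial\Omega_{\bar\ve}$, suppose for contradiction that $\min(\D w)<0$ somewhere; since $\D(\D w) = p u_{\bar\ve}^{p-1} w + L_{\bar\ve}w$, if $w\ge 0$ held we would get $\D(\D w)\ge 0$, so $\D w$ is subharmonic, hence attains its minimum on $\partial\Omega_{\bar\ve}$ where it is $\le 0$ — this forces $\D w\le 0$ throughout \emph{once we know $w\ge 0$}. So the real content is $w\ge 0$: from $\D w\le 0$, $w$ is superharmonic, and with $w\ge 0$ on $\partial\Omega_{\bar\ve}$ the minimum principle gives $w\ge 0$. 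The circularity is broken by the barrier: consider $w + \eta\,\Phi_{\bar\ve}$ for $\eta>0$ and let $\eta\downarrow 0$; since $\Phi_{\bar\ve}\to+\infty$ at $\Sigma$ while $w\in C^{4,\alpha}_{\mu}$ with $\mu>\Re(\gamma_0^{+-})$ grows slower, $w+\eta\Phi_{\bar\ve}$ cannot have a negative interior infimum for small $\ve_0$, and neither can $\D(w+\eta\Phi_{\bar\ve})$ have a negative interior infimum, by the sign of $L_{\bar\ve}\Phi_{\bar\ve}$ and $\D\Phi_{\bar\ve}$.

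\textbf{Choice of $\ve_0$ and the main obstacle.} The threshold $\ve_0$ enters in two independent places: (i) making the error term $p u_{\bar\ve}^{p-1}$ small in the outer region $\Omega\setminus N_{\bar\ve}$ so that $\D^2$ controls it — this is immediate from the estimates in Section~\ref{section-higher}; and (ii) the normalization of $u_1$ referred to in the statement, namely rescaling so that $r^4 u_1^{p-1}(r)\le\alpha$ on $(1,\infty)$ for a small $\alpha$ (possible by the scaling remark in the Approximate Solutions subsection), which guarantees the barrier $\Phi_{\bar\ve}$ really is a supersolution up to the cutoff scale $R$. \textbf{The hard part} is the transition region $N_{i,\sigma}\setminus N_{i,\ve_i}$, where one has to glue the inner model barrier (governed by the $A_p/r^4$ potential) to the outer constant barrier across the support of $\nabla\chi_R$: there the cutoff produces error terms of size $\ve_i^{\gamma}R^{-4}$ times derivatives of $u_1$, and one must check these do not spoil the sign of $L_{\bar\ve}\Phi_{\bar\ve}$ nor of $\D\Phi_{\bar\ve}$. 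This is handled by choosing $\psi$ with a slightly better decay exponent than $u_1$ (so that $\D\psi<0$ with room to spare) and absorbing the cutoff errors for $R$ fixed and $\ve_0$ small; the Fermi-coordinate expansion \eqref{Delta}–\eqref{est-13} shows the curvature terms of $\Sigma_i$ contribute only lower-order corrections in the same weighted norms, so they do not affect the argument.
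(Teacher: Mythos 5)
Your proposal and the paper take fundamentally different routes: you attempt a pointwise barrier/sliding argument, while the paper uses an integral argument. The paper introduces the auxiliary functions $v$ and $\tilde v$ solving $-\D v = -(\D w)^-$, $-\D\tilde v = (\D w)^+$, shows $v\ge 0$, $\tilde v\ge 0$, $v-w\ge 0$, $-w\le\tilde v$, then multiplies $\D^2 w$ by $v-w$ and integrates by parts to get
$\int_{\Omega_{\bar\ve}}[(\D w)^+]^2\,dx \le -\int_{\Omega_{\bar\ve}}(v-w)\D^2 w\,dx$, bounds the right side by $p\int u_{\bar\ve}^{p-1}\tilde v(v-w)\,dx$ via $L_{\bar\ve}w\ge 0$, and then applies Cauchy--Schwarz plus a Hardy inequality $\int\tilde v^2/|x-x_i|^4 \le (n-4)^{-2}\int(\D\tilde v)^2$ to close the estimate with the small constant $\delta$ coming from the normalization $p r^4 u_1^{p-1}(r)\le\delta$ on $(1,\infty)$. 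This completely bypasses the need for any pointwise comparison function.

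There is a genuine gap in your proposal. You correctly identify the circularity in the factorized maximum principle for $\D^2=\D\circ\D$: you need $w\ge 0$ to conclude $\D(\D w)\ge 0$ (hence $\D w$ subharmonic, hence $\D w\le 0$), but you need $\D w\le 0$ to conclude $w$ superharmonic (hence $w\ge 0$). However, your proposed resolution --- adding $\eta\Phi_{\bar\ve}$ and sending $\eta\downarrow 0$ --- does not break this circle. First, $\Omega_{\bar\ve}$ is a bounded domain at distance $\ve_i$ from each $\Sigma_i$, so $\Phi_{\bar\ve}$ is bounded there and does not blow up; the ``$w$ grows slower than $\Phi_{\bar\ve}$ at $\Sigma$'' reasoning does not apply on $\Omega_{\bar\ve}$. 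Second, even granting the barrier's good pointwise signs, to conclude that $\D(w+\eta\Phi_{\bar\ve})\le 0$ in the interior you would still need $\D^2(w+\eta\Phi_{\bar\ve})\ge 0$, which requires $p u_{\bar\ve}^{p-1}(w+\eta\Phi_{\bar\ve})\ge 0$, i.e., $w+\eta\Phi_{\bar\ve}\ge 0$ --- exactly what you are trying to prove. The correct version of your idea is a sliding/continuation argument (e.g. $t_0=\inf\{t\ge 0: w+t\Phi\ge 0 \text{ and } \D(w+t\Phi)\le 0\}$), but that in turn requires strict sign inequalities for $\Phi$ and $\D\Phi$ on all of $\partial\Omega_{\bar\ve}$, which your cutoff construction $\chi_R(\cdot-x_i)\psi(\cdot/\ve_i)$ destroys near $\partial\Omega$ and on the support of $\nabla\chi_R$. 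Unless you replace the barrier by one that is globally strictly positive with strictly negative Laplacian and verify the supersolution property across the gluing region, the argument is incomplete. The paper's $L^2$/Hardy approach avoids all of these pointwise delicacies.

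Your observation about where the threshold $\ve_0$ enters --- the normalization $p r^4 u_1^{p-1}(r)\le\delta$ on $(1,\infty)$ --- is correct and matches the paper, but it serves a different role there: it makes the Hardy-inequality constant $\delta K/(n-4)^2$ less than $1$, not a pointwise supersolution inequality.
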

  
  \begin{proof} Let $v$ and $\tilde v$ be given by  \begin{align*}  \left\{\begin{array}{ll} - \D v=-(\D w)^-&\quad\text{in }\Omega_{\bar \ve} \\ v=w&\quad\text{on }\partial\Omega_{\bar\ve},\quad \end{array}\right.   \left\{\begin{array}{ll}  -\D \tilde v=(\D w)^+&\quad\text{in }\Omega_{\bar \ve} \\ \tilde  v=0&\quad\text{on }\partial\Omega_{\bar\ve}.\quad \end{array}\right.  \end{align*} Then $v\geq0$ and $\tilde v\geq 0$ in $\Omega_{\bar\ve}$. We shall show that $\tilde v=0$,  and hence   $v=w$. 
  
  It follows that  \begin{align*}  \left\{\begin{array}{ll} - \D (v-w)=(\D w)^+\geq 0&\quad\text{in }\Omega_{\bar \ve} \\ v-w=0&\quad\text{on }\partial\Omega_{\bar\ve},\quad \end{array}\right.   \left\{\begin{array}{ll}  -\D (\tilde v+w)=-(\D w)^-\geq0&\quad\text{in }\Omega_{\bar \ve} \\ \tilde  v+w\geq 0&\quad\text{on }\partial\Omega_{\bar\ve}.\quad \end{array}\right.  \end{align*}  Therefore, $$v-w\geq0,\quad  -w\leq\tilde v\quad \text{in }\Omega_{\bar\ve}\quad \text{and }\frac{\partial (v-w)}{\partial \nu}\leq0\text{ on }\partial\Omega_{\bar\ve},$$ where $\nu$ is the outward unit normal vector.   We compute \begin{align*} \int_{\Omega_{\bar\ve}} (v-w)\D^2 wdx&=\int_{\Omega_{\bar\ve}}\D(v-w)\D wdx+\int_{\partial\Omega_{\bar \ve}}\left((v-w)\frac{\partial \D w}{\partial\nu}-\D w\frac{\partial (v-w)}{\partial\nu}\right) d\sigma \\ &=-\int_{\Omega_{\bar \ve}}[(\D w)^+]^2dx -\int_{\partial\Omega_{\bar \ve}} \D w\frac{\partial (v-w)}{\partial\nu}d\sigma \\ & \leq  -\int_{\Omega_{\bar \ve}}[(\D w)^+]^2dx.\end{align*}  Thus \begin{align}\label{31} \int_{\Omega_{\bar \ve}}[(\D w)^+]^2dx &\leq -\int_{\Omega_{\bar \ve}}(v-w)\D^2 wdx \notag\\ &\leq p\int_{\Omega_{\bar \ve}} u_{\bar\ve}^{p-1}(-w)(v-w) dx\notag\\ &\leq  p\int_{\Omega_{\bar\ve}}u^{p-1}_{\bar\ve}\tilde v(v-w)dx\notag \\&\leq \delta \int_{\Omega_{\bar\ve}}\sum_{i=1}^K\frac{1}{|x-x_i|^4} \tilde v (v-w)dx\notag\\ & \leq \delta \sum_{i=1}^K\left(\int_{\Omega_{\bar\ve}} \frac{\tilde v(x)^2}{|x-x_i|^4} dx \right)^\frac12\left(\int_{\Omega_{\bar\ve}} \frac{(v(x)-w(x))^2}{|x-x_i|^4} dx \right)^\frac12,\end{align} where $\delta>0$ can be chosen arbitrarily small by normalizing $u_1$ so that $pr^4u_1^{p-1}(r)\leq \delta$ for $r\geq 1$. Since $\tilde v=0$ on $\partial\Omega_{\bar\ve}$ (same arguments for $v-w$),  integrating by parts we obtain \begin{align*}  \int_{\Omega_{\bar\ve}} \frac{\tilde v(x)^2}{|x-x_i|^4} dx & 
  =\frac{-1}{2(n-4)}  \int_{\Omega_{\bar\ve}}\tilde v(x)^2 \D \frac{1}{|x-x_i|^2} dx\\ &=\frac{-1}{n-4}  \int_{\Omega_{\bar\ve}}  \frac{\tilde v\D\tilde v+|\nabla\tilde v|^2}{|x-x_i|^2} dx\\ &\leq \frac{-1}{n-4}  \int_{\Omega_{\bar\ve}}  \frac{\tilde v\D\tilde v}{|x-x_i|^2} dx\\ &\leq\frac{1}{n-4} \left( \int_{\Omega_{\bar\ve}}  \frac{\tilde v(x)^2}{|x-x_i|^4} dx \right)^\frac12    \left( \int_{\Omega_{\bar\ve}}  (\D\tilde v (x))^2 dx\right)^\frac12,  \end{align*} which gives  $$ \int_{\Omega_{\bar\ve}} \frac{\tilde v(x)^2}{|x-x_i|^4} dx\leq \frac{1}{(n-4)^2}\int_{\Omega_{\bar\ve}} (\D \tilde v(x))^2dx= \frac{1}{(n-4)^2}\int_{\Omega_{\bar\ve}} [(\D w(x))^+]^2dx.$$ Going back to \eqref{31} $$ \int_{\Omega_{\bar \ve}}[(\D w)^+]^2dx\leq \delta K \frac{1}{(n-4)^2} \int_{\Omega_{\bar \ve}}[(\D w)^+]^2dx,$$ and hence $(\D w)^+=0$.
  
  We conclude the lemma. 
  
    \end{proof}

  \begin{rem} $L_{\bar\ve}$ satisfies maximum principle on  $\cup_{i=1}^K B_\sigma(\Sigma_i) \backslash B_{\ve_i}(\Sigma_i)$ for $0<\ve_i<\ve_0$.    \end{rem}
 
 \begin{lem}\label{7.2}Fix $\ve_0>0$ such that $L_{\bar \ve}$ satisfies the maximum principle on $\Omega_{\bar\ve}$. Let $4-N<\gamma<0$ be fixed. Let  $w_\bve$ be a solution to $L_\bve w_\bve=f_\bve$ on $\Omega_\bve$ for some $f_\bve\in C^{0,\alpha}_{\gamma-4}(\Omega_\bve)$, and $0<\ve_i\leq\ve_0$. Assume that $w_\bve=\D w_\bve=0$ on $\partial\Omega$. Then there exists $C>0$ such that \begin{align}\label{wve-est}\|w_\bve\|_{4,\alpha,\gamma}\leq C\left( \|f_\bve\|_{0,\alpha,\gamma-4}+\sum_{i=1}^K\left(\ve_i^{-\gamma}\|w_\bve\|_{C^0(\partial B_{\ve_i}(\Sigma_i))}+\ve_i^{2-\gamma}\|\D w_\bve\|_{C^0(\partial B_{\ve_i}(\Sigma_i))} \right)\right) .\end{align}  \end{lem}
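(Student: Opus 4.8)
The statement is a standard weighted Schauder-type a priori estimate for the linearized operator $L_{\bar\ve}$ on the punctured domain $\Omega_{\bar\ve}$, combined with a Harnack/scaling bound near the singular tubes. I would argue by contradiction and rescaling. First, define the quantity
\[
\Lambda_{\bar\ve}:=\|f_\bve\|_{0,\alpha,\gamma-4}+\sum_{i=1}^K\left(\ve_i^{-\gamma}\|w_\bve\|_{C^0(\partial B_{\ve_i}(\Sigma_i))}+\ve_i^{2-\gamma}\|\D w_\bve\|_{C^0(\partial B_{\ve_i}(\Sigma_i))}\right),
\]
and suppose \eqref{wve-est} fails. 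Then there exist sequences $\bar\ve^{(m)}\to 0$ (or staying bounded away from $0$), $f^{(m)}$, $w^{(m)}$ with $L_{\bar\ve^{(m)}}w^{(m)}=f^{(m)}$, $w^{(m)}=\D w^{(m)}=0$ on $\partial\Omega$, $\|w^{(m)}\|_{4,\alpha,\gamma}=1$ and $\Lambda_{\bar\ve^{(m)}}\to 0$. The goal is to extract a nontrivial limit that violates one of the injectivity results proved earlier.

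\emph{Step 1: locating the concentration.} Since $\|w^{(m)}\|_{4,\alpha,\gamma}=1$ there is a point $q_m\in\bar\Omega\setminus\Sigma$ at which the weighted norm is (up to a factor $\tfrac12$) achieved. Let $d_m=\operatorname{dist}(q_m,\Sigma)$. The weighted interior Schauder estimates for $\D^2$ (applied on a ball of radius comparable to $d_m$, using that $pu_{\bar\ve}^{p-1}$ has the right weighted size by the normalization $pr^4u_1^{p-1}\le\delta$) together with $\Lambda_{\bar\ve^{(m)}}\to0$ force $d_m/\ve_{i(m)}$ to stay bounded away from $0$ for the relevant index $i(m)$ — i.e. the mass cannot escape to the far region $\Omega\setminus N_{\sigma/2}$, where the estimate is elliptic-regular and the contribution of $f^{(m)}$ and the boundary data is controlled, nor can it collapse onto $\partial B_{\ve_i}(\Sigma_i)$ faster than scale $\ve_i$ (that is exactly what the boundary terms in $\Lambda$ absorb). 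After passing to a subsequence, either $d_m\to d_\infty>0$, or $d_m\to 0$ with $d_m/\ve_{i(m)}\to\rho\in(0,\infty]$, or $d_m/\ve_{i(m)}\to 0$ but $d_m\gg\ve_{i(m)}$.

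\emph{Step 2: rescaling and passing to the limit.} In each case perform the natural blow-up. If $d_m\to d_\infty>0$ we stay on a fixed scale and, by the weighted Schauder estimates and Arzel\`a–Ascoli, $w^{(m)}\to w_\infty\neq 0$ locally in $C^4$ on $\Omega\setminus\Sigma$, solving $\D^2 w_\infty - p u_{0}^{p-1}w_\infty=0$ away from $\Sigma$ with $w_\infty=\D w_\infty=0$ on $\partial\Omega$ and $\|w_\infty\|_{4,\alpha,\gamma}\le 1$ with the weighted norm attained; but $\gamma<0$ means $w_\infty$ decays at $\Sigma$ like $|x|^{\gamma}$ with $\gamma>4-N$, which is precisely the regime where the argument of Proposition~\ref{inj-prop-1} (lifted to $\R^n\setminus\R^k$ via Proposition~\ref{inj-prop-3}) gives $w_\infty\equiv0$ — a contradiction. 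If the concentration is at scale $\ve_{i(m)}$ near $\Sigma_{i(m)}$, set $\tilde w_m(z)=\ve_{i(m)}^{-\gamma}w^{(m)}(x_{i(m)}+\ve_{i(m)}\,\cdot)$ in Fermi coordinates; using the expansion \eqref{Delta}–\eqref{est-13} the operator converges to $\mathbb{L}_1=\D^2_{x,y}-pu_1^{p-1}$ on $\R^n\setminus\R^{k_{i(m)}}$ (the $\Sigma$-directions flatten out in the blow-up), the boundary $\partial B_{\ve_i}$ scales to $\partial B_1$ where the limit data vanishes, and the limit $\tilde w_\infty$ is a nonzero element of $C^{4,\alpha}_{\mu,0}(\R^n\setminus\R^k)$ — wait, more precisely the growth is governed by $\gamma$ at $0$ and by boundedness at infinity, so one checks $\tilde w_\infty$ lies in the space where Proposition~\ref{inj-prop-3} applies — contradiction again. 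The intermediate regime $\ve_{i(m)}\ll d_m\ll 1$ blows up at scale $d_m$ and yields in the limit a solution of $\D^2 w_\infty=0$ on $\R^n\setminus\R^k$ (the potential $pu_{\bar\ve}^{p-1}$ scales away) with the correct weighted bounds, which is ruled out by the explicit indicial-root analysis of $\D^2$ on $\R^n\setminus\R^k$ (equivalently Proposition~\ref{inj-prop-2}/\ref{inj-prop-4} with $A_p$ replaced by $0$), forcing $w_\infty\equiv 0$.

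\emph{Step 3: upgrading to the full $C^{4,\alpha}$ weighted norm and closing.} Once the $C^0_\gamma$ part of $\|w^{(m)}\|_{4,\alpha,\gamma}$ is shown $\to0$, the derivative and H\"older parts follow from the weighted interior Schauder estimates for $\D^2$ — rescaling every annulus $N_s\setminus N_{s/2}$ to unit size, applying standard Schauder on a fixed-size region, and using property (i) of Lemma~\ref{lem-properties} to convert — so $\|w^{(m)}\|_{4,\alpha,\gamma}\to 0$, contradicting $\|w^{(m)}\|_{4,\alpha,\gamma}=1$. This proves \eqref{wve-est}. The main obstacle is Step~1–2: one must rule out \emph{every} possible blow-up scale and show that in each the rescaled limit lands exactly in the function space covered by one of Propositions~\ref{inj-prop-1}–\ref{inj-prop-4}, keeping careful track of how the weight $|x|^\gamma$ (with $4-N<\gamma<0$) interacts with the indicial roots $\gamma_0^{++}$ and $\gamma_j^{\pm\pm}$; the bookkeeping of the boundary terms on $\partial B_{\ve_i}(\Sigma_i)$ under rescaling, so that they vanish in the limit, is the delicate technical point.
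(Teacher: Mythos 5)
Your proposal takes a genuinely different route from the paper's, and as written it does not close. The paper proves Lemma~\ref{7.2} by a \emph{direct barrier argument}: take $\phi(x)=d(x,\Sigma_i)^\gamma$, note $\D^2\phi=c_{N,\gamma}\,d(x,\Sigma_i)^{\gamma-4}$ with $c_{N,\gamma}>0$ for every $\gamma\in(4-N,0)$, absorb the potential $-p\bar u_\bve^{p-1}\phi$ after normalizing $u_1$ so that $pr^4u_1^{p-1}$ is small, and invoke the maximum principle from the preceding lemma to compare $\pm w_\bve$ with $(c_{1,\bve}+c_{2,\bve})\phi$. The only compactness in the paper's proof is a short removable-singularity argument at the \emph{fixed} scale $\sigma$, used to absorb the outer boundary constant $c_{4,\bve}$ into $\|f_\bve\|$ and the inner boundary data; Schauder theory then upgrades the pointwise bound to the full $C^{4,\alpha}_\gamma$ norm. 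Your proposal instead normalizes $\|w^{(m)}\|_{4,\alpha,\gamma}=1$, blows up, and invokes Propositions~\ref{inj-prop-1}--\ref{inj-prop-4}; that is the mechanism the paper reserves for Lemmas~\ref{7.3} and~\ref{inj-omega}, not for this one.

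Two concrete obstacles block your route. First, the range $4-N<\gamma<0$ contains $\Re(\gamma_0^{--})$ and $\Re(\gamma_0^{+-})$, so after blow-up the limit can be a nonzero element of $\ker\mathbb{L}_1$ (or of $\ker(\D^2-A_p/r^4)$) with exactly the growth $r^\gamma$ that your normalization permits; Propositions~\ref{inj-prop-1}--\ref{inj-prop-4} are proved only for weights $\mu$ in the strict gap $\Re(\gamma_0^{+-})<\mu<\gamma_0^{++}$, and your phrase ``one checks $\tilde w_\infty$ lies in the space where Proposition~\ref{inj-prop-3} applies'' hides precisely this failure. The barrier sidesteps it entirely because it only uses that $c_{N,\gamma}>0$, which holds for all of $(4-N,0)$. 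Second, the equation in Lemma~\ref{7.2} is posed on the truncated domain $\Omega_\bve=\Omega\setminus\cup_iB_{\ve_i}(\Sigma_i)$, so rescaling at scale $\ve_i$ produces a limit on the \emph{exterior} of the unit ball or tube with Navier data on its boundary --- not on the punctured spaces $\R^N\setminus\{0\}$ or $\R^n\setminus\R^k$ where Propositions~\ref{inj-prop-1}--\ref{inj-prop-4} are formulated. To rule out a nonzero limit on that exterior region one is driven back to the maximum principle for $L_1$ on exterior domains, which is exactly what the barrier argument uses directly, with no limiting procedure at all.
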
 
 \begin{proof} Let $\sigma>0$ be as in Section \ref{section-higher} so that $\bar u_\ve$ is supported in $\cup_{i=1}^KB_{\sigma}(\Sigma_i)$. We fix a smooth positive function $\phi$ on $\Omega\setminus \cup \Sigma_i$ such that $\phi(x)=d(x,\Sigma_i)^\gamma$ in each $B_\sigma(\Sigma_i)$.   
For simplicity we assume that $\Sigma_i$ is a point $x_i$. Then $\phi(x)=|x-x_i|^\gamma$ on $B_\sigma(\Sigma_i)$. We compute $$\D^2 \vp(x)=c_{N,\gamma}|x-x_i|^{\gamma-4},\quad c_{N,\gamma}:=\gamma(\gamma-2)(N^2+\gamma^2+2N\gamma-6N-6\gamma+8)>0,$$ and $$\D \phi(x)=\tilde c_{N,\gamma}|x-x_i|^{\gamma-2},\quad \tilde c_{N,\gamma}:=\gamma(N+2-\gamma)<0.$$  This shows that for a suitable choice of $u_1$, we have for some $\delta>0$ $$L_\bve \phi(x)=\D^2\phi-p\bar u_\bve^{p-1}\phi\geq \delta |x-x_i|^{\gamma-4}\quad\text{on }{\bf \Omega}:=\cup_{i=1}^K B_{\sigma}(\Sigma_i)\setminus B_{\ve_i}(\Sigma_i).$$ Therefore, we can choose $c_{1,\bve}\approx \|f_\bve\|_{0,\alpha,\gamma-4}$ so that $$  L_\bve(w_\bve+c_{1,\bve}\phi)\geq0  \quad\text{on } \bf\Omega.$$ We can also choose \begin{align*}c_{2,\bve}\approx  & \sum_{i=1}^K\left(\ve_i^{-\gamma}\|w_\bve\|_{C^0(\partial B_{\ve_i}(\Sigma_i))}+\ve_i^{2-\gamma}\|\D w_\bve\|_{C^0(\partial B_{\ve_i}(\Sigma_i))} \right) \\  &\quad +\sum_{i=1}^K\left(\|w_\bve\|_{C^0(\partial B_{\sigma}(\Sigma_i))}+\|\D w_\bve\|_{C^0(\partial B_{\sigma}(\Sigma_i))} \right)\\&=: c_{3,\bve}+c_{4,\bve}, \end{align*} so that $$ w_\bve+(c_{1,\bve}+c_{2,\bve})\phi\geq 0\quad \text{and }  \D w_\bve+(c_{1,\bve}+c_{2,\bve})\D \phi\leq 0 \quad\text{on } \bf\Omega. $$ Then by Maximum principle we have that (to get the other inequality use $-\phi$) $$|w_\bve|\leq  (c_{1,\bve}+c_{2,\bve})\phi \quad\text{and }|\D w_\bve|\leq -(c_{1,\bve}+c_{2,\bve})\D\phi\quad\text{in }\bf\Omega.$$ Since, $\D^2w_\bve=f_\bve$ in $\Omega_\bve\setminus\bf\Omega$, we get that $$|w_\bve (x) |+|\D w_\bve (x)|\lesssim (c_{1,\bve}+c_{2,\bve})\quad x\in \Omega_\bve\setminus\bf\Omega.$$ We claim that \begin{align}c_{4,\bve}\lesssim c_{3,\bve}+\|f_\bve\|_{0,\alpha,\gamma-4}.\end{align} We assume by contradiction that the above claim is false. Then there exists a family of solutions $w_\ell=w_{\bve_\ell}$ to $L_{\bve_\ell}w_\ell=f_\ell$ with $0<\ve_{i,\ell}<\ve_0$, $f_\ell \in C^{0,\alpha}_{\gamma-4}(\Omega_{\bve_\ell})$, $w_\ell=\D w_\ell=0$ on $\partial\Omega$ such that \begin{align}\label{assump-33}c_{4,\bve_\ell}=1\quad\text{and } c_{3,\bve_\ell}+\|f_\ell\|\to0. \end{align} Then, up to a subsequence, $\Omega_{\bve_\ell}\to \tilde\Omega$, where $\tilde\Omega_\bve=\Omega\setminus\cup_{i=1}^KB_{\ve_i}(\Sigma_i)$ for some $0\leq \ve_i\leq\ve_0$. Here  $B_{\ve_i}(\Sigma_i)=\Sigma_i$ if $\ve_i=0$ for some $i$. 

From the estimates on $w_\ell$ we see that $w_\ell\to w$  in $\bar \Omega\setminus\cup_{i=1}^KB_{\ve_i}(\Sigma_i)$. Moreover, $w$ satisfies  $$L_{\bar \ve}w=0\quad\text{in }\tilde\Omega_\bve,$$ where $L_\bve=\D^2-p\bar u_\bve^{p-1}$, with the understanding that if $\ve_i=0$ for some $i$ then  $\bar u_\bve=0$  on $B_\sigma(\Sigma_i)$. Notice that $w$ satisfies $$w=\D w=0\quad\text{on }\partial\Omega\cup_{\ve_i\neq 0}\partial B_\ve(\Sigma_i).$$ If $\ve_i=0$ for some $i$, then $w$ is bi-harmonic in $B_\sigma(\Sigma)\setminus \Sigma_i $, and as $w(x)=O(d(x,\Sigma_i)^{\gamma})$ with $4-N<\gamma<0$, we see that the singularity on $\Sigma_i$ is removable. Thus, we can use maximum principle to conclude that $w=0$ in $\tilde\Omega_\bve$. This contradicts the first condition in \eqref{assump-33}.

In this way we have that there exists $C>0$ independent of $\bve$, but  depending only on the right hand side  of \eqref{wve-est}  such that  $$|w_\bve|\leq C\phi\quad\text{and }|\D w_\bve|\leq C(1+|\D\phi| )\quad\text{in }\Omega_\bve. $$
The desired estimate follows from Schauder theory.

 \end{proof}

 \begin{lem}  \label{7.3} Let $(w_\ell)\subset C^{4,\alpha}_\mu(B_\sigma(\Sigma_i))$ be a sequence of solutions to $L_1w_\ell=0$ in $B_\sigma(\Sigma_i)$, for some fixed $i\in\{1,2,\dots,K\}.$  If $|w_\ell|+|\D w_\ell|\leq C$ on $B_\sigma(\Sigma_i)\setminus B_\frac\sigma2(\Sigma_i)$ then $\|w_\ell\|_{C^{4,\alpha}_\mu(\Sigma_i)}$ is uniformly bounded. \end{lem}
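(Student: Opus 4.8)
\medskip
\noindent\emph{Proof strategy.} I would run a blow-up/contradiction argument of Mazzeo--Pacard type (cf.\ \cite{Mazzeo-Pacard96}), which reduces, after rescaling at the concentration scale near $\Sigma_i$, to the injectivity of the model operator on $\R^n\setminus\R^{k_i}$ established in Proposition~\ref{inj-prop-4}.

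Suppose the conclusion fails. Writing $B_s:=B_s(\Sigma_i)$ and $M_\ell:=\|w_\ell\|_{C^{4,\alpha}_\mu(B_\sigma)}$, we have $M_\ell\to\infty$, so after replacing $w_\ell$ by $w_\ell/M_\ell$ we may assume $\|w_\ell\|_{C^{4,\alpha}_\mu(B_\sigma)}=1$ while $|w_\ell|+|\D w_\ell|\to0$ uniformly on $B_\sigma\setminus B_{\sigma/2}$. Since $L_1w_\ell=0$, elliptic regularity upgrades this to $|w_\ell|_{C^{4,\alpha}(\overline{B_\sigma}\setminus B_{\sigma/2})}\to0$, so $\sup_{0<s<\sigma}s^{-\mu}|w_\ell|_{4,\alpha,s}\not\to0$; I would then pick $s_\ell\in(0,\sigma)$ with $s_\ell^{-\mu}|w_\ell|_{4,\alpha,s_\ell}\ge\tfrac12$ together with a point $z_\ell$ at distance $\asymp s_\ell$ from $\Sigma_i$ at which this seminorm is essentially realized. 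If $\liminf_\ell s_\ell>0$, then along a subsequence the weighted bound and interior Schauder estimates make $\{w_\ell\}$ precompact in $C^{4,\alpha}_{\loc}(\overline{B_\sigma}\setminus\Sigma_i)$, and the limit $w$ solves $L_1w=0$ there with $w\equiv0$ on $B_\sigma\setminus B_{\sigma/2}$; unique continuation for the fourth order operator $\D^2-pu_1^{p-1}$ (which has smooth coefficients off $\Sigma_i$) gives $w\equiv0$, contradicting $s_\ell^{-\mu}|w_\ell|_{4,\alpha,s_\ell}\ge\tfrac12$. Hence $s_\ell\to0$.

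Now I would blow up at $\Sigma_i$. Along a subsequence the foot point $p_\ell\in\Sigma_i$ of $z_\ell$ converges to some $p_*\in\Sigma_i$; using the Fermi chart $\Phi_{p_\ell}$ about $p_\ell$ set
$$\hat w_\ell(x,y):=s_\ell^{-\mu}\,w_\ell\bigl(\Phi_{p_\ell}(s_\ell x,\,s_\ell y)\bigr),\qquad (x,y)\in\R^{n-k_i}\times\R^{k_i}.$$
Because $\Sigma_i$ is compact the rescaled submanifolds flatten to $T_{p_*}\Sigma_i\cong\R^{k_i}$ and the rescaled domains exhaust $\R^n\setminus\R^{k_i}$; by \eqref{Delta}--\eqref{est-13} the error terms $\mathsf{e}_j\nabla^j$ and the contribution of the second fundamental form of $\Sigma_i$ come, after rescaling, with strictly positive powers of $s_\ell$ and hence drop out, while the potential rescales as $s_\ell^{4}\,p\,u_1(s_\ell|x|)^{p-1}=V_p(s_\ell|x|)\,|x|^{-4}\to A_p|x|^{-4}$ locally uniformly on $\{x\ne0\}$, by the definition \eqref{Ap} of $A_p$ and the asymptotics of $u_1$ at the origin; thus $L_1$, read through this rescaling, converges to $\D^2_{x,y}-A_p|x|^{-4}$ on $\R^n\setminus\R^{k_i}$. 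A bookkeeping computation with the seminorms shows that $\|w_\ell\|_{C^{4,\alpha}_\mu(B_\sigma)}=1$ transfers to a uniform bound $\|\hat w_\ell\|_{C^{4,\alpha}_{\mu,\mu}(\R^n\setminus\R^{k_i})}\le C$ --- the range $0<s<\sigma$ of admissible scales for $w_\ell$ becoming $0<t<\sigma/s_\ell\to\infty$ for $\hat w_\ell$, which fixes the rate at infinity to be $\mu$ --- whereas the choice of $s_\ell$ gives $|\hat w_\ell|_{4,\alpha,1}\ge c>0$. Passing to a $C^{4,\alpha}_{\loc}$-limit $\hat w$, we obtain $\hat w\in C^{4,\alpha}_{\mu,\mu}(\R^n\setminus\R^{k_i})$, $\hat w\not\equiv0$, and $\D^2\hat w-A_p|x|^{-4}\hat w=0$; this contradicts Proposition~\ref{inj-prop-4}, and the contradiction proves that $\|w_\ell\|_{C^{4,\alpha}_\mu(B_\sigma(\Sigma_i))}$ is bounded.

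I expect the blow-up step to be the main obstacle: one must set up the rescaling intrinsically in Fermi coordinates and check, with uniformity in $\ell$, both that the lower order Fermi and curvature terms in \eqref{Delta} genuinely vanish in the limit (this is exactly what \eqref{est-13} is used for) and that the weighted H\"older norm on the curved tube $B_\sigma(\Sigma_i)$ converts to the weighted norm on the flat model $\R^n\setminus\R^{k_i}$ with the correct behaviour at infinity, so that the \emph{global} statement of Proposition~\ref{inj-prop-4} --- rather than a merely local estimate --- can be invoked. The unique continuation used in the case $\liminf s_\ell>0$, and the standard device for the situation where the seminorm concentrates in the H\"older part rather than in a lower order term, are secondary technical points.
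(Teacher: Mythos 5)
Your proof follows the same overall strategy as the paper: blow up along a concentration scale $s_\ell\to0$ near $\Sigma_i$, identify the rescaled limit as a solution of $\D^2\hat w-A_p|x|^{-4}\hat w=0$ in $C^{4,\alpha}_{\mu,\mu}(\R^n\setminus\R^{k_i})$, and conclude by the injectivity in Proposition~\ref{inj-prop-4} (resp.\ Proposition~\ref{inj-prop-2} when $\Sigma_i$ is a point). The rescaling of the potential $V_p(s_\ell r)|x|^{-4}\to A_p|x|^{-4}$, the transfer of the weighted norm to $C^{4,\alpha}_{\mu,\mu}$ on the model, and the role of \eqref{Delta}--\eqref{est-13} in killing the Fermi error terms are all exactly the points the paper uses.

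The one place your argument differs in a way that matters is the choice of quantity to normalize by. You set $M_\ell:=\|w_\ell\|_{C^{4,\alpha}_\mu}$ and divide by $M_\ell$, which is why you have to worry about the case where the $C^{4,\alpha}$ seminorm concentrates in the H\"older quotient rather than at a single point (in that case the $C^{4,\beta}_{\loc}$ limit, $\beta<\alpha$, could vanish identically even though $|\hat w_\ell|_{4,\alpha,1}\geq c$). You flag this as a ``secondary technical point,'' but it is in fact the one genuine delicate spot in your route, and handling it head on (the usual two-point blow-up at the H\"older scale) is awkward. The paper sidesteps it entirely: it first reduces the lemma to the bound $S_\ell:=\sup\bigl(r^{-\mu}|w_\ell|+r^{2-\mu}|\D w_\ell|\bigr)\leq C$, which is a pointwise weighted quantity and therefore \emph{is} realized at a single point $x_\ell$, and only afterwards promotes $S_\ell\leq C$ to the full $C^{4,\alpha}_\mu$ bound via interior Schauder estimates. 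If you adopt that reduction first, your argument becomes a literal rephrasing of the paper's. On the other hand, your treatment of the preliminary case $s_\ell\not\to0$ is actually more explicit than the paper's: the paper asserts $\bar w_\infty\equiv 0$ without comment, while you correctly name unique continuation (equivalently, analyticity of solutions of $L_1$ off $\Sigma_i$, since the coefficients are analytic away from the singular set) as the mechanism.
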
 
 \begin{proof} It suffices to show that $$S_\ell:=\sup (r^{-\mu}|w_\ell|+r^{2-\mu}|\D w_\ell| )\leq C.$$ We assume by contradiction that the above supremum is not uniformly bounded.  Let $x_\ell=(r_\ell,\theta_\ell,y_\ell)\in B_\sigma(\Sigma_i)$  be such that  $$ S_\ell \approx r_\ell^{-\mu}|w_\ell (x_\ell)|+r_\ell^{2-\mu}|\D w_\ell (x_\ell)| .$$  We claim that $r_\ell\to 0$. On the contrary, if $r_\ell\to r_\infty\neq0$, then setting  $\bar w_\ell=\frac{w_\ell}{S_\ell}$ we see that $\bar w_\ell\to \bar w_\infty$, where $$ L_1\bar w_\infty=0\quad \text{in }B_\sigma(\Sigma_i),\quad \bar w_\infty\equiv 0\text{ in }B_\sigma(\Sigma_i)\setminus B_\frac\sigma2(\Sigma_i).$$ Therefore, $\bar w_\infty\equiv 0$ in $B_\sigma(\Sigma_i)$, which contradicts to $$r_\infty^{-\mu}|\bar w_\infty (x_\infty)|+r_\infty^{2-\mu}|\D w_\infty(x_\infty)|  \approx 1 .$$  
If $\Sigma_i=\{x_i\}$, we set    $$\tilde w_\ell(r,\theta)=\frac{r_\ell^{-\mu}w_\ell(rr_\ell,\theta)}{S_\ell},\quad 0<r<\frac{\sigma}{r_\ell}.$$  Then $$r^{-\mu}|\tilde w_\ell(r,\theta)|+r^{2-\mu}|\D \tilde w_\ell(r,\theta)|\lesssim 1 \approx \sup_{\partial B_1}(|\tilde w_\ell|+|\D\tilde w_\ell|),$$ and $\tilde w_\ell$ satisfies  $$L_{r_\ell}\tilde w_\ell=0\quad\text{in }B_\frac{\sigma}{r_\ell}.$$ If $\Sigma_i$ is higher dimensional, then $y_\ell\to y_\infty$, and we choose Fermi coordinates around $y_\infty$ so that $y_\infty=0$, and the coordinates are defined for $|y|<\tau$ for some $\tau>0$. Then we set $$\tilde w_\ell(r,\theta,y):=\frac{r_\ell^{-\mu}w_\ell(rr_\ell,\theta, r_\ell(y{ +\tilde{y}_\ell))}}{S_\ell},\quad \tilde{y_\ell}:=\frac{y_\ell}{r_\ell}\,\quad 0<r<\frac{\sigma}{r_\ell},\, |y|<\frac{\tau}{2r_\ell}.$$ In this case  $\tilde w_\ell$ satisfies the equation $L_{r_\ell}\tilde w_\ell =o(1)$. 

In both cases we have that $\tilde w_\ell\to \tilde w_\infty\not\equiv 0$,  $\tilde w_\infty$ satisfies  $r^{-\mu} |\tilde w_\infty|\leq C$. For the point singularity case, the limit function satisfies  $$\D^2 \tilde w_\infty =\frac{A_p}{r^4}\tilde w_\infty\quad\text{in }\R^n\setminus\{0\},$$ where $A_p$ is given by \eqref{Ap}.  By proposition \ref{inj-prop-2} we get that $\tilde w_\infty\equiv0$, a contradiction.  

 For the higher dimensional case $$\D^2 \tilde w_\infty =\frac{A_p}{r^4}\tilde w_\infty\quad\text{in }\R^n\setminus \R^k,$$    and we get a contradiction by Proposition \ref{inj-prop-4}. 
 \end{proof}

\begin{lem} \label{inj-omega}There exists $\ve_0>0$ sufficiently small such that if each $\ve_i<\ve_0$ then $$L_\bve:C^{4,\alpha}_{\mu,\mathscr{N}} (\Omega\setminus\Sigma)\to C^{0,\alpha}_{\mu-4}(\Omega\setminus\Sigma)$$ is injective.   \end{lem}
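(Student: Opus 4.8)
The plan is to prove injectivity by a blow-up/compactness argument which, at the critical scale, reduces the question to the model injectivity results of Section~4. Suppose the statement fails: then there are parameters $\bve_\ell=(\ve_{1,\ell},\dots,\ve_{K,\ell})\to 0$ and nontrivial $w_\ell\in C^{4,\alpha}_{\mu,\mathscr N}(\Omega\setminus\Sigma)$ with $L_{\bve_\ell}w_\ell=0$, which we normalize so that $\|w_\ell\|_{C^{4,\alpha}_\mu}=1$. From \eqref{mu-nu}, \eqref{indicial18} and the hypothesis $p>\frac{n-k_i}{n-k_i-4}$ one checks that, in every relevant model codimension $N=n-k_i$, $4-N<\Re(\gamma_0^{+-})<\mu<4-N+\tfrac{4}{p-1}<0$; this is used repeatedly below.

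First I would reduce the weighted norm of $w_\ell$ to its Cauchy data on the spheres $\partial B_{\ve_{i,\ell}}(\Sigma_i)$. On $\Omega_{\bve_\ell}=\Omega\setminus\cup_i B_{\ve_{i,\ell}}(\Sigma_i)$ we have $L_{\bve_\ell}w_\ell=0$, so Lemma~\ref{7.2} with $\gamma=\mu$ (admissible since $4-N<\mu<0$) and $f_\bve\equiv 0$ controls $w_\ell$ there by
$$\|w_\ell\|_{4,\alpha,\mu}\ \le\ C\,m_\ell,\qquad m_\ell:=\sum_{i=1}^K\Big(\ve_{i,\ell}^{-\mu}\|w_\ell\|_{C^0(\partial B_{\ve_{i,\ell}}(\Sigma_i))}+\ve_{i,\ell}^{2-\mu}\|\D w_\ell\|_{C^0(\partial B_{\ve_{i,\ell}}(\Sigma_i))}\Big).$$
Inside each $B_{\ve_{i,\ell}}(\Sigma_i)$ I would rescale by $\ve_{i,\ell}$, which turns $L_{\bve_\ell}$ into $L_1$ (resp. $\mathbb L_1$) up to an $o(1)$ error coming from \eqref{Delta}--\eqref{est-13} in the higher dimensional case, and then run the argument of Lemma~\ref{7.3}, whose only inputs — Propositions~\ref{inj-prop-2} and \ref{inj-prop-4} — exclude concentration at scales $\lesssim\ve_{i,\ell}$ precisely because $\mu$ is not an indicial root of $\D^2-A_p r^{-4}$. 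This yields $\|w_\ell\|_{C^{4,\alpha}_\mu(B_{\ve_{i,\ell}}(\Sigma_i)\setminus\Sigma_i)}\le Cm_\ell$, so that $1=\|w_\ell\|_{C^{4,\alpha}_\mu}\le Cm_\ell$ and hence $m_\ell\ge c_0>0$; conversely $m_\ell\le C$ follows by evaluating the defining seminorm of $\|w_\ell\|_{C^{4,\alpha}_\mu}$ at a scale $s\asymp\ve_{i,\ell}$. After passing to a subsequence and relabelling, we may assume the $i=1$ summand dominates $m_\ell$ and that $m_\ell\to m_*\in[c_0,C]$.

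Next I would blow up at $\Sigma_1$ at the scale $r_\ell:=\ve_{1,\ell}\to 0$, exactly as in the proof of Lemma~\ref{7.3}: for $\dim\Sigma_1=0$ set $\tilde w_\ell(z)=r_\ell^{-\mu}w_\ell(x_1+r_\ell z)$, and for $\dim\Sigma_1=k_1>0$ choose $y_\ell\in\Sigma_1$ near which the $i=1$ term is essentially attained, pass to Fermi coordinates centred at $y_\ell$, and rescale likewise. Since $\bar u_{\bve_\ell}=u_{\ve_{1,\ell}}$ on $B_\sigma(\Sigma_1)$ and $\D^2$ is homogeneous, $\tilde w_\ell$ solves $L_1\tilde w_\ell=0$ (resp. $\mathbb L_1\tilde w_\ell=o(1)$) on $B_{\sigma/r_\ell}\setminus\R^{k_1}$, while the normalization gives the uniform bounds $|\nabla^j\tilde w_\ell(z)|\le C|z|^{\mu-j}$, $0\le j\le 4$, on that set. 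By local compactness, $\tilde w_\ell\to\tilde w_\infty$ in $C^4_{\loc}(\R^n\setminus\R^{k_1})$, with $\mathbb L_1\tilde w_\infty=0$ on $\R^n\setminus\R^{k_1}$ and $|\nabla^j\tilde w_\infty(z)|\le C|z|^{\mu-j}$ everywhere. As $\mu<0$, the decay $|z|^\mu$ at infinity places $\tilde w_\infty$ in $C^{4,\alpha}_{\mu,0}(\R^n\setminus\R^{k_1})$, so Proposition~\ref{inj-prop-3} (or Proposition~\ref{inj-prop-1} when $k_1=0$) forces $\tilde w_\infty\equiv 0$. But the dominance of the $i=1$ term produces points $p_\ell$ with $|p_\ell|\asymp 1$ and $|\tilde w_\ell(p_\ell)|+|\D\tilde w_\ell(p_\ell)|\gtrsim m_\ell$; passing to the limit, $|\tilde w_\infty(p_*)|+|\D\tilde w_\infty(p_*)|\gtrsim m_*>0$, a contradiction. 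Choosing $\ve_0$ small enough that no such sequence can occur then proves the lemma.

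The crux is the a priori reduction of the second paragraph: Lemma~\ref{7.2} only controls $w_\ell$ down to the scale $\ve_{i,\ell}$, so one must separately rule out concentration at the smaller scales inside $B_{\ve_{i,\ell}}(\Sigma_i)$, and this is where the placement of $\mu$ strictly between the indicial roots of $\D^2$ (at infinity) and of $\D^2-A_p r^{-4}$ (along $\Sigma$) is essential; technically one needs the estimate of Lemma~\ref{7.3} to hold uniformly as the rescaled domain $B_{\sigma/\ve_{i,\ell}}$ exhausts $\R^n\setminus\R^{k_i}$. The remainder is routine compactness bookkeeping, with Propositions~\ref{inj-prop-1} and \ref{inj-prop-3} carrying the essential weight.
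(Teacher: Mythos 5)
Your proposal is correct and follows essentially the same blow-up/compactness scheme as the paper: normalize, control the exterior by Lemma~\ref{7.2}, control the interior of each $B_{\ve_{i,\ell}}(\Sigma_i)$ via the argument of Lemma~\ref{7.3} (whose inputs are Propositions~\ref{inj-prop-2} and \ref{inj-prop-4}), rescale by the largest $\ve_{i,\ell}$, and kill the limit with Propositions~\ref{inj-prop-1} and \ref{inj-prop-3}. The only real deviation is the normalization -- the paper directly sets $\max_{\partial\Omega_{\bve^\ell}}(\rho^{-\mu}|w_\ell|+\rho^{2-\mu}|\D w_\ell|)=1$, which makes your separate lower bound $m_\ell\ge c_0$ unnecessary -- but this is a cosmetic choice rather than a different route.
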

\begin{proof} We assume by contradiction that for every  $\ve_0^\ell:=\frac1\ell$ there exists $\bve^\ell$  with each $\ve_i^\ell<\ve_0^\ell$ such that $L_{\bve^\ell}$ is not injective. Let $w_\ell \in C^{4,\alpha}_{\mu,\mathscr{N}} (\Omega\setminus\Sigma)$ be a  non-trivial solution to $L_{\bve^\ell}w_\ell=0$. 
 We normalize $w_\ell$ so that $$\max_{\partial\Omega_{\bve^\ell}} \left( \rho(x)^{-\mu}|w_\ell(x)|+ \rho(x)^{2-\mu}|\D w_\ell(x)|\right)=1.$$ Then by Lemma \ref{7.2} we get  that \begin{align}\label{est-35}\sup_{\Omega_{\bve^\ell}} \left( \rho(x)^{-\mu}|w_\ell(x)|+ \rho(x)^{2-\mu}|\D w_\ell(x)|\right)\leq C.\end{align} 
 
  First consider the case when $\Sigma$ is a set of finite points.
 Assume that the above maximum is achieved  on $\partial B_{\ve_j^\ell}(x_j)$ for some $j$, and upto a translation, assume that $x_j=0$. We set $$\bar w_\ell(x)=(\ve_j^\ell)^{-\mu} w_\ell(\ve_j^\ell x),\quad |x|<\frac{\sigma}{\ve_j^\ell}.$$ Then $L_1\tilde w_\ell=0$ on $B_{R_\ell}$ for some $R_\ell\to\infty$, and  $r^{-\mu}|\tilde w_\ell| +r^{2-\mu}|\D \tilde w_\ell|\leq C$ for  $1\leq r\leq \frac{\sigma}{\ve_j^\ell}$. Therefore, by Lemma \ref{7.3},   $$r^{-\mu}|\tilde w_\ell| +r^{2-\mu}|\D \tilde w_\ell|\leq C\quad\text{ for  } r\leq \frac{\sigma}{\ve_j^\ell}.$$ Hence $\tilde w_\ell\to \tilde w_\infty$, where $\tilde w_\infty $ satisfies $$L_1\tilde w_\infty=0\quad\text{in }\R^n\setminus\{0\},\quad r^{-\mu}|\tilde w_\infty| +r^{2-\mu}|\D \tilde w_\infty |\leq C .$$ Hence, by Proposition \ref{inj-prop-1} we have $\tilde w_\infty\equiv 0 $, a contradiction to $\max_{\partial B_1}|\tilde w_\infty|+|\D \tilde w_\infty|=1 $. 

Next we consider the case of higher dimensional singularity. Let $x_\ell=(r_\ell,\theta_\ell,y_\ell) $  be a point around $\Sigma_j$ for some $j$  such that $$r_\ell^{-\mu}|w_\ell(x_\ell)|+ r_\ell^{2-\mu}|\D w_\ell(x_\ell)| \approx \sup_{\Omega} \rho(x)^{-\mu}|w_\ell|+ \rho(x)^{2-\mu}|\D w_\ell| =:S_\ell. $$  We can also assume that  $r_\ell\leq \ve_j^\ell$, thanks to \eqref{est-35}. We shall take $r_\ell=\ve_j^\ell$ if they are of the same order so that   either $r_\ell=o(\ve_j^\ell)$ or $r_\ell=\ve_j^\ell$.   We choose Fermi coordinates around $y_\infty$ (limit of $y_\ell$) so that $y_\infty=0$, and the coordinates are defined for $|y|<\tau$ for some $\tau>0$. We set $$\tilde w_\ell(r,\theta,y):=\frac{r_\ell^{-\mu}w_\ell(rr_\ell,\theta, r_\ell(y +\tilde{y}_\ell))}{S_\ell},\quad \tilde{y_\ell}:=\frac{y_\ell}{r_\ell}\,\quad 0<r<\frac{\sigma}{r_\ell},\, |y|<\frac{\tau}{2r_\ell}.$$  


As before one gets a contradiction, thanks to Propositions \ref{inj-prop-3} and  \ref{inj-prop-4}. 
 \end{proof}

  \section{Uniform surjectivity  of $L_{\bar\ve}$ on $C^{4,\alpha}_{\mu,\mathscr{N}}(\Omega\setminus\Sigma)$} 
  
  Let $\rho$ be a  smooth function on  $\Omega\setminus\Sigma$ with positive lower bound  such that $\rho(\cdot )=dist(\cdot,\Sigma_i)$ in $ B_\sigma(\Sigma_i)$, for every $i$.  The weighted  space $L^2_{\delta}(\Omega\setminus \Sigma)$ is defined by  
  
  $$L^2_{\delta}(\Omega\setminus \Sigma):=\left\{ w\in L^2_{loc}(\Omega\setminus\Sigma): \int_{\Omega} \rho^{-4-2\delta}|w|^2 dx<\infty \right\}.$$ Let $L^2_{-\delta}(\Omega\setminus\Sigma)$ be the dual of $L^2_{\delta}(\Omega\setminus \Sigma)$ with respect to the pairing $$L^2_{\delta}(\Omega\setminus \Sigma)\times L^2_{-\delta}(\Omega\setminus \Sigma)\,\ni\, (w_1,w_2)\longrightarrow \int_{\Omega}w_1w_2 \rho^{-4}dx.$$ We note that  the following embedding is continuous  $$C^{k,\alpha}_{\gamma}(\Omega\setminus\Sigma) \hookrightarrow L^2_{\delta}(\Omega\setminus\Sigma)\quad\text{for }\delta<\gamma+\frac{N-4}{2}.$$ 
  The domain $D(L_\bve)$ of the operator $L_\bve$ is the set of functions $w\in L^2_\delta$ (for simplicity we drop the domain $\Omega\setminus \Sigma$ from the notation)  such that  $L_\bve w=h\in L^2_{\delta-4}$ in    the sense of distributions. One can show that  the following elliptic estimate holds:  $$\sum_{\ell=1}^4 \|\nabla^\ell w\|_{L^2_{\delta-\ell}(\Omega_\sigma)}\leq C(\|h\|_{L^2_{\delta-4}} +\|w\|_{L^2_\delta }),$$  where $\Omega_\sigma:=\cup_{i=1}^K(B_\sigma(\Sigma_i) \setminus \Sigma_i)$.  Moreover, for  $\delta-\frac{N-4}{2}\not\in\{ \Re{\gamma_j^{\pm\pm}}:j=0,1,\dots\}$, the operator  $L_\bve: L^2_\delta\to L^2_{\delta-4}$ is densely defined,  it has closed graph, and  in fact, it  is Fredholm (see \cite{mazzeoEdge}). 
  

    We shall fix $\delta>0 $ slightly bigger than $\mu+\frac{N-4}{2}$, 
      where $\mu$ is fixed according to \eqref{mu-nu}.  The adjoint of the operator \begin{align}\label{op}L_\bve :L^2_{-\delta}\to L^2_{-\delta-4}\end{align} is given by \begin{align} \label{op-adj}L^2_{\delta+4}\to L^2_{\delta},\quad  w\mapsto \rho^4L_\bve(w\rho^{-4}).\end{align} Then the adjoint operator    \eqref{op-adj} is injective, and $L_\bve$ in \eqref{op} is surjective.  Using the isomorphism  $$\rho^{2\delta}:L^2_{\tilde \delta}\to L^{2}_{2\delta+\tilde\delta},\quad w\mapsto \rho^{2\delta}w,$$ we identify the adjoint operator as $$L_\bve^*: L^2_{-\delta+4}\to L^2_{-\delta},\quad w\mapsto \rho^{4-2\delta}L_\bve(w\rho^{2\delta-4}) .$$  Now we consider  the composition $$\LL=L_\bve \circ L_\bve^* :L_{-\delta+4}^2\to L_{-\delta-4}^2, \quad w\mapsto L_\bve[\rho^{4-2\delta}L_\bve(w\rho^{2\delta-4})].$$ Then $\LL$ is an isomorphism, and hence there exists a two sided inverse  $$\GG_\bve:L^2_{-\delta-4}\to L^2_{-\delta+4}.$$ Consequently, the right inverse of $L_\bve$ is given by $G_\bve:=L_\bve^*\GG_\bve$.  It follows from \cite{Mazzeo-Pacard96} that $$G_\bve: C^{0,\alpha}_{\nu-4} (\Omega\setminus\Sigma)\to C^{4,\alpha}_{\nu,\mathcal{N}}(\Omega\setminus\Sigma)$$
is bounded.

\begin{lem} Let $\ve_0>0$ be as in Lemma \ref{inj-omega}. Then  the system  $L_\bve w_1=0$, $w_1=L_\bve^* w_2$ with $w_1\in C^{4,\alpha}_{\nu,\mathcal{N}}(\Omega\setminus\Sigma)$ and $w_2\in C^{8,\alpha}_{\nu+4,\mathcal{N}}(\Omega\setminus\Sigma)$ has only   trivial solution.  \end{lem}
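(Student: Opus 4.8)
The plan is to show that the pair $(w_1, w_2)$ must vanish by exploiting the injectivity of $L_\bve$ on $C^{4,\alpha}_{\mu,\mathscr{N}}(\Omega\setminus\Sigma)$ (Lemma~\ref{inj-omega}) together with a weighted duality pairing argument. First I would observe that $w_1$ satisfies $L_\bve w_1=0$, but a priori $w_1$ only lives in $C^{4,\alpha}_{\nu,\mathcal{N}}$ with $\nu<\Re(\gamma_0^{--})$, so we cannot directly invoke Lemma~\ref{inj-omega}, which requires the weight $\mu$ on the other side of the indicial gap. The key point is that the indicial roots of $L_1$ (hence of $L_\bve$) have no real part in the open interval $(\nu,\mu)$ other than possibly $\Re(\gamma_0^{--})$ and $\Re(\gamma_0^{+-})$; more precisely, by the analytic theory of edge operators (cf.\ \cite{mazzeoEdge}), a solution of the homogeneous equation $L_\bve w_1=0$ that decays like $\rho^\nu$ near $\Sigma$ either decays faster, crossing up to the weight $\mu$, or picks up a precise leading term $\rho^{\gamma_0^{--}}$ (or $\rho^{\gamma_0^{+-}}$). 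In the former case, $w_1\in C^{4,\alpha}_{\mu,\mathscr{N}}(\Omega\setminus\Sigma)$, and Lemma~\ref{inj-omega} immediately gives $w_1\equiv0$.

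The heart of the argument is to rule out the exceptional leading behavior, and this is where $w_2$ enters. Since $w_1=L_\bve^* w_2=\rho^{4-2\delta}L_\bve(w_2\rho^{2\delta-4})$, and $w_2\in C^{8,\alpha}_{\nu+4,\mathcal{N}}$, one has precise control on the asymptotics of $\tilde w_2:=w_2\rho^{2\delta-4}$, which lies in a weighted space with weight $\nu+4+2\delta-4=\nu+2\delta$. The plan is to compute the $L^2_{-\delta}$–pairing $\int_\Omega |w_1|^2\rho^{-4}\,dx$, substitute $w_1=L_\bve^* w_2$, and integrate by parts: since $L_\bve$ is formally self-adjoint with respect to the flat $L^2$ pairing and $w_1$ has vanishing Navier data on $\partial\Omega$, one obtains $\int_\Omega w_1 \cdot L_\bve^* w_2 \,\rho^{-4}\,dx = \int_\Omega \rho^{4-2\delta}(L_\bve w_1)(w_2\rho^{2\delta-4})\,dx + (\text{boundary terms at }\partial\Omega\text{ and at }\Sigma)$. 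The bulk term vanishes because $L_\bve w_1=0$. The boundary terms at $\partial\Omega$ vanish by the Navier conditions imposed on both $w_1$ and $w_2$. The boundary terms at $\Sigma$ (extracted as limits over $\partial B_s(\Sigma_i)$ as $s\to0$) vanish provided the combined weights of $w_1$, $w_2$, and their derivatives, against the volume factor $s^{N-1}$ from integration over the tube, tend to zero — this is exactly the purpose of fixing $\delta$ slightly larger than $\mu+\frac{N-4}{2}$ and choosing $\nu,\mu$ with $\mu+\nu=4-N$. Hence $\int_\Omega |w_1|^2\rho^{-4}\,dx=0$, so $w_1\equiv0$.

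Once $w_1\equiv0$, we are left with $L_\bve^* w_2=0$, i.e.\ $\rho^{4-2\delta}L_\bve(w_2\rho^{2\delta-4})=0$, which means $\tilde w_2:=w_2\rho^{2\delta-4}$ satisfies $L_\bve \tilde w_2=0$. Now $\tilde w_2$ lies in a weighted space with weight $\nu+2\delta$; by our choice $\delta$ slightly exceeds $\mu+\frac{N-4}{2}$, so $\nu+2\delta$ slightly exceeds $\nu+2\mu+(N-4)=\mu+(\mu+\nu)+(N-4)=\mu$, using $\mu+\nu=4-N$. Thus $\tilde w_2$ decays strictly faster than $\rho^\mu$ near $\Sigma$, hence $\tilde w_2\in C^{4,\alpha}_{\mu,\mathscr{N}}(\Omega\setminus\Sigma)$ (and inherits the Navier conditions since $w_2$ has them and $\rho$ is bounded below away from $\Sigma$). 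Applying Lemma~\ref{inj-omega} once more yields $\tilde w_2\equiv0$, hence $w_2\equiv0$. This completes the argument.

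The main obstacle I anticipate is the careful bookkeeping of the boundary contributions at $\Sigma$ in the integration by parts: because $L_\bve$ is fourth order, the Green-type identity produces four boundary terms on each $\partial B_s(\Sigma_i)$ involving $w_1, \nabla w_1, \D w_1, \nabla\D w_1$ paired against the corresponding derivatives of $w_2\rho^{2\delta-4}$, each weighted by powers of $s$ and by $s^{N-1-k_i}$ (for a $k_i$-dimensional $\Sigma_i$, after integrating over the normal sphere). One must check that the exponent of $s$ in every such term is strictly positive, which reduces to the strict inequalities in \eqref{mu-nu} together with the slight enlargement of $\delta$; this is a finite check but must be done for all four terms and requires knowing that no indicial root's real part obstructs the decay rate actually achieved by $w_1$ and $\tilde w_2$. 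A secondary subtlety is justifying the integration by parts itself (and the convergence of the pairing integrals), which follows from the weighted Hölder membership and the elliptic estimate stated just before the lemma, but should be stated explicitly.
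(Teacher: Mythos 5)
Your overall strategy -- produce an integral identity from $w_1 = L_\bve^* w_2$ and $L_\bve w_1 = 0$, conclude $w_1 = 0$, and then feed $\tilde w_2 := \rho^{2\delta-4}w_2$ into Lemma~\ref{inj-omega} -- is essentially the route the paper takes (the paper collapses the two steps into one, but the idea is the same). However, the central integration-by-parts identity you write down is incorrect, and this is a genuine gap.

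You claim
$$\int_\Omega w_1 (L_\bve^* w_2)\,\rho^{-4}\,dx \;=\; \int_\Omega \rho^{4-2\delta}(L_\bve w_1)(w_2\rho^{2\delta-4})\,dx + (\text{b.t.}),$$
invoking the flat $L^2$ self-adjointness of $L_\bve$. But expanding the left side with $L_\bve^* w_2 = \rho^{4-2\delta}L_\bve(w_2\rho^{2\delta-4})$ gives $\int w_1\,\rho^{-2\delta}\,L_\bve(w_2\rho^{2\delta-4})\,dx$, and integrating by parts in flat $L^2$ moves $L_\bve$ onto $w_1\rho^{-2\delta}$, yielding $\int L_\bve(w_1\rho^{-2\delta})\,(w_2\rho^{2\delta-4})\,dx$. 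Since $\rho^{-2\delta}$ is not constant, $L_\bve(w_1\rho^{-2\delta}) \neq \rho^{-2\delta}L_\bve w_1$, so the bulk term does not reduce to something that vanishes when $L_\bve w_1 = 0$. The error traces back to the choice of weight: pairing with $\rho^{-4}$ is not compatible with using flat $L^2$ self-adjointness. The weight that makes the manipulation clean is $\rho^{2\delta-4}$: one has $\int |w_1|^2\rho^{2\delta-4}\,dx = \int w_1\,L_\bve(\tilde w_2)\,dx$, and now a genuine flat integration by parts gives $\int (L_\bve w_1)\,\tilde w_2\,dx = 0$. Equivalently -- and this is exactly what the paper does -- one multiplies the equation $L_\bve[\rho^{4-2\delta}L_\bve w] = 0$ (where $w = \rho^{2\delta-4}w_2$) by $w$ itself and integrates once by parts to get $\int_\Omega\rho^{4-2\delta}|L_\bve w|^2\,dx = 0$, whence $L_\bve w = 0$, and then Lemma~\ref{inj-omega} applied to $w\in C^{4,\alpha}_{\nu+2\delta}\subset C^{4,\alpha}_\mu$ finishes in one stroke. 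Your second step and the membership computation $\nu+2\delta>\mu$ are correct and match the paper; the first paragraph about indicial roots and exceptional leading behavior is superfluous once the integral identity is set up correctly.
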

\begin{proof} We set $w=\rho^{2\delta-4}w_2$. Then $L_\bve[\rho^{4-2\delta} L_\bve w]=0$. Multiplying the equation by $w$ and then integrating by parts we get $$0=\int_{\Omega}\rho^{4-2\delta}|L_\bve w|^2dx.$$ Since $\nu+2\delta>\mu$, we have  $w\in C^{4,\alpha}_{\nu+2\delta}(\Omega\setminus\Sigma)\subset C^{4,\alpha}_{\mu}(\Omega\setminus \Sigma)$. Then by  Lemma \ref{inj-omega} we get that $w=0$, equivalently  $w_1=w_2=0$.  \end{proof}

\begin{lem}  There exists $\ve_0>0 $ small such that  if $0<\ve_i<\ve_0$ for every $i$, then the  sequence of solutions $(w_{1,\bve^\ell})\subset C^{4,\alpha}_{\nu,\mathcal{N}}(\Omega\setminus\Sigma) \cap L^*_{\bve^\ell}[C^{8,\alpha}_{\nu+4,\mathcal{N}}(\Omega\setminus\Sigma)]$  to  $L_{\bve^\ell}w_{1,\bve^\ell}=f_{\bve^\ell}$ is uniformly bounded in $C^{4,\alpha}_{\nu}(\Omega\setminus\Sigma)$, provided $(f_{\bve^\ell})$ is uniformly bounded in $C^{0,\alpha}_{\nu-4}(\Omega\setminus\Sigma)$. \end{lem}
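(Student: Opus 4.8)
The statement is a uniform-in-$\bve$ a priori bound for solutions of $L_{\bve^\ell}w_{1,\bve^\ell}=f_{\bve^\ell}$ lying in the range of $L^*_{\bve^\ell}$, and the natural strategy is a blow-up/contradiction argument exactly parallel to the injectivity proof in Lemma~\ref{inj-omega}. So the plan is: suppose the conclusion fails; then after relabelling there is a sequence $w_\ell:=w_{1,\bve^\ell}$ with $\|w_\ell\|_{C^{4,\alpha}_\nu}\to\infty$ while $\|f_{\bve^\ell}\|_{C^{0,\alpha}_{\nu-4}}\le C$. Normalize by setting $\bar w_\ell:=w_\ell/\|w_\ell\|_{C^{4,\alpha}_\nu}$, so $\|\bar w_\ell\|_{C^{4,\alpha}_\nu}=1$, the rescaled right-hand sides $\bar f_\ell:=f_{\bve^\ell}/\|w_\ell\|_{C^{4,\alpha}_\nu}\to 0$ in $C^{0,\alpha}_{\nu-4}$, and $\bar w_\ell$ still lies in $L^*_{\bve^\ell}[C^{8,\alpha}_{\nu+4,\mathcal N}]$. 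Because the norm is $1$ there is a sequence of points and radii realizing (up to a fixed constant) the weighted sup; one then rescales around those points as in Lemma~\ref{7.3} and Lemma~\ref{inj-omega}, distinguishing the cases where the concentration happens on $\partial B_{\ve_j^\ell}(\Sigma_j)$ (rescaling by $\ve_j^\ell$, limit problem $L_1$ on $\R^n\setminus\{0\}$ or $\R^n\setminus\R^k$), at scale $r_\ell=o(\ve_j^\ell)$ (limit problem $\D^2w=\tfrac{A_p}{r^4}w$), or at a bounded distance from $\Sigma$ (limit problem $L_{\bar\ve}w=0$ on $\Omega\setminus\Sigma$ for some limiting parameter, with Navier conditions on $\partial\Omega$ and on the surviving spheres, and removable singularity where $\ve_i\to0$). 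In every case the weighted bound plus Schauder estimates give local $C^{4,\alpha}$ convergence $\bar w_\ell\to\bar w_\infty$, and $\bar w_\infty$ solves the corresponding homogeneous linear problem with $\|\bar w_\infty\|_{C^{4,\alpha}_\mu}\le C$ (using $\nu<\mu$, so the $\nu$-weighted bound controls the $\mu$-weighted one near the singular set and the decay at infinity is the required one).

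\textbf{Using the range condition.} The new ingredient compared with Lemma~\ref{inj-omega} is that we must also pass the constraint $\bar w_\ell=L^*_{\bve^\ell}w_{2,\ell}$ to the limit, because the homogeneous limit operators $L_1$ and $\D^2-\tfrac{A_p}{r^4}$ are already known to be injective on $C^{4,\alpha}_{\mu,0}$ (Propositions~\ref{inj-prop-1}--\ref{inj-prop-4}) so in the genuinely blown-up cases no constraint is needed and we conclude $\bar w_\infty\equiv0$ directly, contradicting the normalization at the base point. The one case where injectivity of the limit operator genuinely requires the range condition is the non-blow-up case, where the limit is $L_{\bar\ve_\infty}\bar w_\infty=0$ on $\Omega\setminus\Sigma$ with $\bar w_\infty\in C^{4,\alpha}_{\nu,\mathcal N}$: there I would invoke the previous lemma (the system $L_\bve w_1=0$, $w_1=L_\bve^*w_2$ has only the trivial solution) after checking that $w_{2,\ell}$, suitably normalized, converges in $C^{8,\alpha}_{\nu+4}$ on compact subsets — which follows from the boundedness of $G_{\bve}=L_\bve^*\GG_\bve$ between the weighted H\"older spaces and interior Schauder estimates for the eighth-order composition $\LL$. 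Thus $\bar w_\infty\equiv0$ in all cases.

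\textbf{Closing the contradiction.} Once $\bar w_\infty\equiv0$, one returns to the base points: by construction $\rho(x_\ell)^{-\nu}|\bar w_\ell(x_\ell)|+\rho(x_\ell)^{2-\nu}|\D\bar w_\ell(x_\ell)|$ is bounded below by a fixed positive constant, and after rescaling this survives in the limit as $|\bar w_\infty|+|\D\bar w_\infty|\gtrsim 1$ at the (finite) limit point, contradicting $\bar w_\infty\equiv0$. If instead the $C^{4,\alpha}_\nu$ norm is realized through the H\"older seminorm rather than the sup part, the same rescaling argument applies with the H\"older quotient, using Lemma~\ref{lem-properties}. Hence the assumption $\|w_\ell\|_{C^{4,\alpha}_\nu}\to\infty$ is impossible and the desired uniform bound holds for all $\bve$ with $\ve_i<\ve_0$.

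\textbf{Main obstacle.} The delicate point is the bookkeeping of the many rescaling regimes — in particular choosing the base points and radii so that either a nontrivial limit on one of the model spaces $\R^n\setminus\{0\}$, $\R^n\setminus\R^k$ is produced (where Propositions~\ref{inj-prop-1}--\ref{inj-prop-4} apply), or else a genuine limit on $\Omega\setminus\Sigma$ with a well-defined limiting parameter $\bar\ve_\infty$ (possibly with some $\ve_i=0$, giving a removable singularity) is produced — and, in the latter case, simultaneously extracting a convergent subsequence of the potentials $w_{2,\ell}$ so that the range condition survives; this is where the boundedness of $G_\bve$ in weighted H\"older norms, quoted from \cite{Mazzeo-Pacard96}, does the essential work.
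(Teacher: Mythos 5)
You assert that ``using $\nu<\mu$, the $\nu$-weighted bound controls the $\mu$-weighted one near the singular set,'' and consequently that in the genuinely blown-up cases you can conclude directly from the injectivity Propositions~\ref{inj-prop-1}--\ref{inj-prop-4} without touching the range condition. This is backwards. With $\nu<\mu<0$, a bound $|w|\le C\rho^{\nu}$ near $\Sigma$ allows \emph{more} singular behavior than $|w|\le C\rho^{\mu}$; that is, $C^{4,\alpha}_{\mu}\subset C^{4,\alpha}_{\nu}$ near $\Sigma$, not the other way around. So the blow-up limit produced in the scaling regime $r_\ell\sim\ve_i^\ell$ is, a priori, only in $C^{4,\alpha}_{\nu,\cdot}(\R^N\setminus\{0\})$, a \emph{larger} space than the one on which Proposition~\ref{inj-prop-1} gives injectivity, and the argument cannot close at this point.

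\textbf{Gap 2: the range condition is used precisely in the blown-up case you say doesn't need it.}
The paper's treatment of Case~2 ($S_{\bve^\ell}\to\infty$, rescaling at scale $\ve_i^\ell$) works exactly by passing the constraint $w_{1,\bve^\ell}=\rho^{4-2\delta}L_{\bve^\ell}w_{2,\bve^\ell}$ through the blow-up: one rescales $w_{2,\bve^\ell}$ simultaneously, obtains in the limit $L_1\tilde w_2=r^{2\delta-4}\tilde w_1$ and hence $L_1[r^{4-2\delta}L_1\tilde w_2]=0$, and after multiplying by $\tilde w_2$ and integrating by parts (justified by the decay of $\tilde w_1$ at infinity and the choice of $\delta$ close to $\mu+\tfrac{N-4}{2}$) concludes $L_1\tilde w_2=0$. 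The point is then that $\tilde w_2\in C^{4,\alpha}_{\nu+2\delta}$ with $\nu+2\delta>\mu$ (this is where $\mu+\nu=4-N$ and the choice of $\delta$ enter), so $\tilde w_2$ does lie in the space where Proposition~\ref{inj-prop-1} applies, giving $\tilde w_2=\tilde w_1=0$. Your proposal instead reserves the range/duality argument for the non-blow-up case, where in fact Lemma~\ref{7.2} already reduces matters to the behavior at $\partial B_{\ve_i^\ell}(\Sigma_i)$; the duality is needed in the rescaled regime, not there. Finally, you do not address the higher-dimensional case at all, where the additional (and nontrivial) point is that the rescaled limit is independent of the longitudinal variable $y$; the paper establishes this via the edge-calculus parametrix $G_{\bve^\ell}L_{\bve^\ell}=Id+R_{\bve^\ell}$ and polyhomogeneity of $R_{\bve^\ell}$, an ingredient absent from your proposal.
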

\begin{proof} Assume by contradiction that the lemma is false. Then there exists a sequence  of $K$-tuples $(\bve^\ell)$ with $\bve_i^\ell\to 0$ for each $i=1,\dots,K$, and  $w_{1,\bve^\ell }\in  C^{4,\alpha}_{\nu,\mathcal{N}}(\Omega\setminus\Sigma)\cap L_{\bve^\ell}[C^{8,\alpha}_{\nu+4,\mathcal{N}}(\Omega\setminus\Sigma)]$  with   $L_{\bve^\ell}w_{1,\bve^\ell}=f_{\bve^\ell}$ such that $\|f_{\bve^\ell}\|_{C^{0,\alpha}_{\nu-4,\mathcal{N}}(\Omega\setminus\Sigma)}\leq C$.  By Lemma \ref{7.2} $$\|w_{1,\bve^\ell}\|_{C^{4,\alpha}_{\nu}(\Omega_{\bve^\ell})}\leq C+C \max_{\cup \partial B_{\ve_i^\ell}(\Sigma_i)}  \left(  \rho^{-\nu}|w_{1,\bve^\ell}|+\rho^{2-\nu}|\D w_{1,\bve^\ell}|\right)=:C+CS_{\bve^\ell}.$$ 

First we consider the case when $\Sigma$ is a set of finitely many points. We distinguish the following two cases. \\

\noindent\textbf{Case 1} $S_{\bve^\ell}\leq C$.

In this case we proceed as in the proof of Lemma \ref{7.3}. Let $x_\ell=(r_\ell,\theta_\ell)$ be such that $$\sup_{\cup  B_{\ve_i^\ell}(\Sigma_i)}  \left(  \rho^{-\nu}|w_{1,\bve^\ell}|+\rho^{2-\nu}|\D w_{1,\bve^\ell}|\right)\approx   \rho^{-\nu}(x_\ell)|w_{1,\bve^\ell}(x_\ell)|+\rho^{2-\nu}(x_\ell)|\D w_{1,\bve^\ell}(x_\ell)|=:S_\ell\to\infty.$$ Up to a subsequence, $x_\ell\in B_{\ve_i^\ell}(\Sigma_i)$ for some fixed $i$. Then necessarily $r_\ell =o(\ve_i^\ell)$. Setting $$\tilde w_{1,\bve^\ell}(r,\theta):=\frac{r_\ell^{-\nu}w_{1,\bve^\ell}(rr_\ell,\theta)}{S_\ell}$$ one would get that $ \tilde w_{1,\bve^\ell}\to \tilde w_1\not\equiv 0$ where $$\tilde L_1\tilde w_1=0\quad\text{in }\R^n\setminus\{0\}, \quad r^{-\nu}|\tilde w_1|\leq C,\quad \tilde L_1:=\D^2-\frac{A_p}{r^4}, $$ where $A_p$ is as in \eqref{Ap}. Since $\nu$ does not coincide with indicial roots of $\tilde L_1$, as in the proof of Proposition \ref{inj-prop-2} one obtains that  $\tilde w_1\equiv0$,   a contradiction. \\

\noindent\textbf{Case 2} $S_{\bve^\ell}\to\infty$.

In this case  we first divide the function $\tilde w_{1,\bve^\ell}$ by $S_{\bve^\ell}$. Then consider the scaling with respect to $\ve_i^\ell$ instead of $r_\ell$ (see Lemma \ref{inj-omega}) and proceeding as before we would get that  $w_{1,\bve^\ell}\to \tilde w_1\not\equiv 0$ where $$L_1\tilde w_1=0\quad\text{in }\R^n\setminus\{0\}, \quad r^{-\nu}|\tilde w_1|\leq C. $$ Since $\tilde w_1$ decays at infinity, its decay rate is determined by the indicial roots of $L_1$ (which are exactly the same as $\D^2$) at infinity. In fact, $\tilde w_1$ would be bounded by $r^{4-N}$ at infinity, see e.g.  the proof of Proposition \ref{inj-prop-1}.  

 Since $\tilde w_{1,\bve^\ell}\in L^*_{\bve^\ell}[C^{8,\alpha}_{\nu+4,\mathcal{N}}(\Omega\setminus\Sigma)]$,  we have $w_{1,\bve^\ell}=\rho^{4-2\delta}L_{\bve^\ell} w_{2,\bve^\ell}$ for some $w_{2,\bve^\ell}\in C^{8,\alpha}_{\nu+2\delta,\mathcal{N}}(\Omega\setminus\Sigma)$. As $2\delta+\nu>\mu$, applying Lemmas \ref{7.2} and \ref{7.3}, we can show that the scaled functions $$\tilde w_{2,\bve^\ell}(r,\theta):=\frac{\ve_{i,\ell}^{-\nu-2\delta}w_{2,\bve^\ell}(r\ve_{i,\ell},\theta)}{S_\ell},$$ converges to a limit function $\tilde w_2$, where $$ L_1\tilde w_2= r^{2\delta-4}\tilde w_1\quad\text{in }\R^n\setminus\{0\}, \quad r^{-\nu-2\delta}|\tilde w_2|\leq C.$$ Thus, $ L_1[r^{4-2\delta} L_1 \tilde w_2]=0$. We multiply this equation by $\tilde w_2$ and integrate it on $\R^N$. Then  an integration by parts leads   $ L_1 \tilde w_2=0$ (this is justified because of the decay of $\tilde w_1$ at infinity, provided we choose $\delta>0$ sufficiently close to $\mu+\frac{N-4}{2}$). Again,  as $2\delta+\nu>\mu$,  by Proposition \ref{inj-prop-1} we have   $\tilde w_2=\tilde w_1=0$, a contradiction.

When $\Sigma$ is of positive dimension,  we need to do the  scaling  as in Lemmas \ref{inj-omega} and \ref{7.3}. We now prove that the limit is independent of the variable $y$. The argument is based on the theory of edge operators and their parametrices\footnote{We are very grateful to Rafe Mazzeo for explaining us the argument, already mentioned in \cite{Mazzeo-Pacard96}}. 

As in the previous section, we set $$\tilde w_{1,\bve^\ell}(r,\theta,y):=\frac{r_\ell^{-\nu}w_{1,\bve^\ell}(rr_\ell,\theta, r_\ell(y +\tilde{y}_\ell))}{S_\ell},\quad \tilde{y_\ell}:=\frac{y_\ell}{r_\ell}\,\quad 0<r<\frac{\sigma}{r_\ell},\, |y|<\frac{\tau}{2r_\ell}.$$  

We now proceed as before to show, because of the normalization, that $\tilde w_{1,\bve^\ell}\to \tilde w_1\not\equiv 0$ and 

$$\tilde L_1\tilde w_1=0\quad\text{in }\R^n\setminus \R^k, \quad r^{-\nu}|\tilde w_1|\leq C,\quad \tilde L_1:=\D^2-\frac{A_p}{r^4}. $$ 

{\bf Claim: The function $\tilde w_1$ does not depend on $y \in \R^k$}. By standard theory in edge calculus (see \cite{mazzeoEdge}), each operator $L_{\bve^\ell}$ has a left parametrix $G_{\bve^\ell}$ since the solutions are normalized. In other words, there exists a compact (in the sense of pseudo-differential operators) $R_{\bve^\ell}$ such that 
$$
G_{\bve^\ell} L_{\bve^\ell}=Id+R_{\bve^\ell}
$$
along every sequence $\bve^\ell$. Furthermore, since $R_{\bve^\ell}$ is compact, it maps polyhomogeneous functions into functions with fast decay. Applying the previous identity to  $\tilde w_{1,\bve^\ell}$, one sees right away that $\tilde w_{1,\bve^\ell}$ is itself polyhomogeneous. Consider now any derivative $\partial_y^\alpha w_{1,\bve^\ell}$, denoted for simplicity $w^{(\alpha)}_{1,\bve^\ell}$. By appropriately normalizing the latter function and using the fact that the compact operator $R_{\bve^\ell}$ is itself polyhomogeneous in $y$, one gets passing to the limit in the previous equation that the limiting function has to be in kernel of $\tilde L_1$ with faster decay, hence it is identically zero. 
Hence the function $\tilde w_1$ is independent of $y$. Therefore,  we are back to the case of a point singularity. This proves the lemma.

\end{proof}

  \section{Fixed point arguments}
  
  To prove existence of solution to \eqref{eq-v} we use a fixed point argument on the space $C^{4,\alpha}_{\nu,\mathcal{N}}(\Omega\setminus\Sigma)$. Since we need to find $v$ such that $\bar u_\bve+v$ is positive in $\Omega$, we shall solve the equation \begin{align}\label{41}\left\{ \begin{array}{ll}  \D^2 (\bar u_\bve+v)=|\bar u_\bve+v|^p &\quad\text{in }\Omega \\          \rule{0cm}{.5cm}      v =\D v=0 &\quad\text{on }\partial\Omega.   \end{array}\right. \end{align} Equivalently, $$ L_\bve v+f_\bve +Q(v)=0,\quad Q(v):=-|\bar u_\bve+v|^p+\bar u_\bve^p+p\bar u_\bve^{p-1}v,$$ where  $f_\bve=\D^2 \bar u_\bve-\bar u_\bve^p$ as before. Applying the inverse of $L_\bve$, that is $G_\bve$, we rewrite the above equation as $$v+G_\bve f_\bve +G_\bve Q(v)=0.$$ The crucial fact we shall use is that the norm of $G_\bve$ is uniformly bounded if $\ve_0$ is sufficiently small. 
  
   We note that if  $v\in C^{4,\alpha}_{\nu,\mathcal{N}}(\Omega\setminus\Sigma)$ is a weak solution to the above equation then by maximum principle we have that $\bar u_\bve +v>0$ in $\Omega$. This  is a simple consequence of the fact that $\nu>-\frac{4}{p-1}$, and therefore, $\D (\bar u_\bve+v)<0$ and $\bar u_\bve+v>0$ in a small neighborhood of $\Sigma$, thanks to the asymptotic behavior of $\bar u_\bve$, $\D \bar u_\bve$ around the origin.   
 
 We recall that the error   $f_\bve=\D^2 \bar u_\bve-\bar u_\bve^p$ satisfies  the estimate   $\|f_\bve\|_{0,\alpha,\nu-4}\leq C\ve_0^{N-\frac{4p}{p-1}}$ if $\Sigma$ is a discrete set, and  $\|f_\bve\|_{0,\alpha,\nu-4}\leq C \ve_0^q$, $q=\frac{p-5}{p-1}-\nu$ otherwise. Let us first consider the case when $\Sigma$ is a set of finitely many points.  Then, there exists $C_0>0$ such that $\|G_\bve f_\bve \|_{4,\alpha,\nu}\leq C_0 \ve_0^{N-\frac{4p}{p-1}}$. This suggests to work on the ball $$\mathscr{B}_{\ve_0,M}=\left\{  v\in C^{4,\alpha}_{\nu} :\|v\|_{4,\alpha,\nu}\leq M\ve_0^{N-\frac{4p}{p-1}}\right\},$$ for some $M>2C_0$ large.    We shall show that the map $v\mapsto G_\bve[f_\bve+Q(v)]$ is a contraction on the ball  $\mathscr{B}_{\ve_0,M}$.  To   this end we shall assume that  $\ve_i\in [a\ve_0,\ve_0]$ for every $i=1,\dots,K$ for some fixed $a\in (0,1)$.    
 
 \begin{lem}  Let $M_1>1$ be fixed. Then for $\ve_0<<1$ we have $$\|Q(v_1)-Q(v_2)\|_{0,\alpha,\nu-4}\leq\frac{1}{M_1}\|v_1-v_2\|_{4,\alpha,\nu}\quad\text{for every }v_1,\,v_2\in \mathscr{B}_{\ve_0,M}.$$ \end{lem}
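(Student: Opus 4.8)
The plan is to estimate the nonlinearity $Q(v) = -|\bar u_\bve + v|^p + \bar u_\bve^p + p\bar u_\bve^{p-1} v$ by exploiting that, for $v_1, v_2 \in \mathscr{B}_{\ve_0,M}$, the difference $Q(v_1) - Q(v_2)$ is, after the fundamental theorem of calculus applied to $t \mapsto Q(tv_1 + (1-t)v_2)$, controlled by a term of the form
\begin{align*}
Q(v_1) - Q(v_2) = -p\int_0^1 \left( |\bar u_\bve + v_t|^{p-1}(\bar u_\bve + v_t) - \bar u_\bve^{p-1} \cdot (\text{appropriate linearization}) \right)' \, dt \cdot (v_1 - v_2),
\end{align*}
so that pointwise one gets a bound $|Q(v_1) - Q(v_2)| \lesssim \left( \bar u_\bve^{p-2}(|v_1| + |v_2|) + (|v_1| + |v_2|)^{p-1} \right)|v_1 - v_2|$ when $p \geq 2$, and the analogous H\"older-type bound $|Q(v_1)-Q(v_2)| \lesssim (|v_1| + |v_2|)^{p-1}|v_1-v_2|$ when $p < 2$. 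The key point is then to track the weighted H\"older norms of each factor.

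First I would record, from the construction of $\bar u_\bve$ in Section \ref{section-higher} (or the isolated-point analogue), that $\bar u_\bve \in C^{4,\alpha}_{-\frac{4}{p-1}}$ near $\Sigma$, with norm bounded independently of $\bve$; equivalently $\bar u_\bve^{p-1} \in C^{0,\alpha}_{-4}$ uniformly. Combining this with parts (iii) and (iv) of Lemma \ref{lem-properties} (the multiplicative property of the weighted norms and the power estimate), and using that $v_1, v_2 \in \mathscr{B}_{\ve_0,M}$ have norm $\lesssim M\ve_0^{N-\frac{4p}{p-1}}$ in $C^{4,\alpha}_\nu$, I would estimate, in the case $p \geq 2$,
$$
\|\bar u_\bve^{p-2}(|v_1|+|v_2|)(v_1-v_2)\|_{0,\alpha,\nu-4} \lesssim \|\bar u_\bve^{p-2}\|_{0,\alpha, -\frac{4(p-2)}{p-1}} \cdot M\ve_0^{N-\frac{4p}{p-1}} \cdot \|v_1-v_2\|_{4,\alpha,\nu},
$$
and one checks the weights add up correctly: $-\frac{4(p-2)}{p-1} + \nu + \nu = \nu - 4$ precisely because $\nu$ was chosen with $\nu > -\frac{4}{p-1}$, so $\nu - (-\frac{4}{p-1}) > 0$ gives the genuinely positive power of $\ve_0$ needed; and similarly for the term $(|v_1|+|v_2|)^{p-1}|v_1-v_2|$, whose weight is $(p-1)\nu + \nu \geq \nu - 4$ again because $\nu > -\frac{4}{p-1}$. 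The upshot is $\|Q(v_1) - Q(v_2)\|_{0,\alpha,\nu-4} \leq C M^{p-1}\ve_0^{\kappa}\|v_1-v_2\|_{4,\alpha,\nu}$ for some $\kappa > 0$ depending on $p, N, \nu$.

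Finally, since $M$ is a fixed (large) constant, choosing $\ve_0$ small enough forces $CM^{p-1}\ve_0^\kappa < 1/M_1$, which is exactly the claim. I expect the main obstacle to be purely bookkeeping: verifying that every weight exponent in the product estimates lands at $\nu - 4$ (or below, which is fine since the domain is bounded near $\Sigma$ and the relevant embeddings are monotone in the weight), and handling the case split at $p = 2$ cleanly — for $1 < p < 2$ one cannot Taylor-expand to second order, and instead must use the elementary inequality $\big||a|^{p-1}a - |b|^{p-1}b\big| \leq C|a-b|^{p-1}(|a|+|b|)^{0} \cdot (\dots)$ more carefully, i.e. $\big||a+s|^{p-1}(a+s) - |a|^{p-1}a - p|a|^{p-1}s\big| \lesssim |s|^p$, which still produces a positive power of $\ve_0$ after inserting the $C^{4,\alpha}_\nu$ bounds on $v_1, v_2$. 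There are no conceptual difficulties beyond this; the contraction constant is small because the ball $\mathscr{B}_{\ve_0,M}$ itself shrinks with $\ve_0$.
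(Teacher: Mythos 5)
Your strategy is essentially the one in the paper: expand $Q(v_1)-Q(v_2)$ via the fundamental theorem of calculus, obtain the pointwise bound with the two terms $\bar u_\bve^{p-2}(|v_1|+|v_2|)$ and $(|v_1|+|v_2|)^{p-1}$, verify the weight exponents close (using $\nu>-\tfrac{4}{p-1}$), and extract the small contraction constant from the fact that the ball $\mathscr{B}_{\ve_0,M}$ itself shrinks in $\ve_0$. The sup-norm part of the weighted estimate you describe is correct and matches the paper's three-region computation in substance.

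However, the H\"older seminorm part is not ``purely bookkeeping,'' and this is where your proposal has a genuine gap. Parts (iii) and (iv) of Lemma \ref{lem-properties} do not apply directly here: (iv) requires a positive argument and fails for $\bar u_\bve^{p-2}$ with $1<p<2$ on the set where $\bar u_\bve$ vanishes, while (iii) only handles products of functions already known to lie in the weighted H\"older scale, which is precisely what must be established for the nonsmooth factor $\int_0^1\bigl(|\bar u_\bve+v_t|^{p-1}-\bar u_\bve^{p-1}\bigr)\,dt$. Two points you omit are crucial. First, for $1<p<2$ the map $s\mapsto|s|^{p-1}$ is only $C^{0,p-1}$, so the scheme can only close with $\alpha\le p-1$; this constraint on the H\"older exponent is a structural requirement, not an afterthought. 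Second, the seminorm must be handled by different arguments in different regions: near $\Sigma$ one uses that $\bar u_\bve+v>0$ (via the pointwise control $|v|\le\tfrac{1}{10}\bar u_\bve$ on a fixed small tube, which you never establish) to justify a genuine gradient bound for $\nabla\bigl(|\bar u_\bve+v_t|^{p-1}-\bar u_\bve^{p-1}\bigr)$, while away from $\Sigma$ one cannot differentiate and must fall back on the elementary inequality $\bigl||\phi|^{p-1}(x)-|\phi|^{p-1}(x')\bigr|\le\|\nabla\phi\|_{C^0}^{p-1}|x-x'|^{p-1}$. Filling these in would complete your proof along the lines of the paper's; as written, the H\"older seminorm claim is asserted but not proved.
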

 \begin{proof} We start by showing that there exists $0<\tau<\sigma$ small, independent of $\ve_0<<1$, such that \begin{align}\label{est-tau-1}|v(x)|\leq \frac{1}{10}\bar u_\bve(x)\quad \text{for every }x\in \cup_{i=1}^K B_\tau(x_i),\, v\in \mathscr{B}_{\ve_0,M}.\end{align} To prove this we recall that for any fixed $R>1$  there exists $c_1,\,c_2>1$ such that $$\frac{1}{c_1}\leq |x|^\frac{4}{p-1}u_{\ve_i}(x)\leq c_1\quad \text{for }|x|\leq R\ve_0,$$ $$\frac{1}{c_2}\leq \ve_0^{-N+\frac{4p}{p-1}}|x|^{N-4}u_{\ve_i}(x)\leq c_2\quad\text{for }R\ve_0\leq  |x|\leq\tau .$$ On the other hand, $$\ve_0^{-N+\frac{4p}{p-1}}  \rho(x) ^{-\nu} |v(x)|\leq M .$$ As $\nu>4-N$, we have \eqref{est-tau-1} for some $\tau>0$ small. 
 
 We have  \begin{align*}  Q(v_1)-Q(v_2)&=\int_0^1\frac{d}{dt}|\bar u_\bve +v_1+t(v_2-v_1) |^pdt +p\bar u_\bve^{p-1}(v_1-v_2)\\ &=p(v_2-v_1)\int_0^1\left( |\bar u_\bve +v_1+t(v_2-v_1) |^{p-1}-\bar u_\bve^{p-1}\right)dt \\  &=: p(v_2-v_1)\int_0^1Q(v_1,v_2)dt. \end{align*}Next,  using that $$ (1+a)^{p-1}=1+O(|a|)\quad \text{for }  |a|\leq\frac{1}{2},$$ 
  we estimate for $ x\in \cup_{i=1}^KB_{R\ve_0}(x_i)$  \begin{align}   |Q(v_1,v_2)|(x) & \leq C\bar u_\bve(x)^{p-2}(|v_1|(x)+|v_2|(x))    \notag \\ &\leq CMR^{\frac{4}{p-1}+\nu}\ve_0^{N-4+\nu} \rho^{-4}(x) , \notag  \end{align} and for  $ x\in \cup_{i=1}^K(B_\tau(x_i)\setminus B_{R\ve_0}(x_i))$  \begin{align}  \notag |Q(v_1,v_2) |(x) & \leq C_{\tau,R}M \max\{\ve_0^{N-\frac{4p}{p-1}},\, \ve_0^{(N-\frac{4p}{p-1})(p-1)}\} \rho^{-4}(x).   \end{align} For $ x\in \Omega\setminus \cup_{i=1}^KB_{\tau}(x_i)$  \begin{align}|Q(v_1,v_2)|(x) &\leq C(\bar u_\bve^{p-1}+|v_1|^{p-1}+|v_2|^{p-1} )(x)  \notag \\ &\leq  C_{\tau,M} \ve_0^{(N-4)p-N}\rho^{-4}(x)  , \notag \end{align} where in the last inequality we have used that, in this region,  $$\bar u_\bve (x)+|v_1(x)|+|v_2(x)|\leq C_{\tau,M}\ve_0^{N-\frac{4p}{p-1}}.$$ Combining these estimates we  get   for $\ve_0<<1$ $$\|Q(v_1)-Q(v_2)\|_{0,0,\nu-4}\leq c_{\ve_0}\|v_1-v_2\|_{4,\alpha,\nu},$$ where $c_{\ve_0}\to0$ as $\ve_0\to0$.  
  
 In order  to estimate the weighted  H\"older norm of $Q(v_1)-Q(v_2)$ we note that the function $|\bar u_\bve+v|^{p-1}$ is only $C^{0,p-1}$ for   $1<p<2$, which in turn implies that $Q(v_1,v_2)$ is only $C^{0,p-1}$. 
 This suggests that we need to take the H\"older exponent $\alpha\leq p-1$. 

For $0<s<\sigma$ we write \begin{align*} &s^{4-\nu+\alpha}\sup_{x,x'\in N_s\setminus N_\frac s2}\frac{|[Q(v_1)-Q(v_2)](x)-[Q(v_1)-Q(v_2)](x')|} {|x-x'|^{\alpha}} \\ &\leq 4 \|Q(v_1)-Q(v_2)\|_{0,0,\nu-4} \\&\quad +s^{4-\nu+\alpha}\sup_{x,x'\in N_s\setminus N_\frac s2,\, |x-x'|\leq\frac s4 }\frac{|[Q(v_1)-Q(v_2)](x)-[Q(v_1)-Q(v_2)](x')|} {|x-x'|^{\alpha}} . \end{align*} Notice that for $x,x'\in N_s\setminus N_\frac s2$ with $|x-x'|\leq \frac s4$, the line segment $[x,y]$ joining $x$ and $y$ lies in $N_{2s}\setminus N_\frac s4$. The desired estimate follows on the region  $\cup_{i=1}^KB_\tau(x_i)$  by estimating $Q(v_1,v_2)(x)-Q(v_1,v_2)(x')$ using the following  gradient bound (we are using that $|\bar u_\bve+v|^{p-1}$ is $C^1$ in this region) \begin{align*}\nabla Q(v_1,v_2)&=(p-1)\left[ (\bar u_\bve+v_1+t(v_2-v_1)^{p-2}-\bar u_\bve^{p-2}) \right]\nabla \bar u_\bve  \\ &\quad +  (p-1) (\bar u_\bve+v_1+t(v_2-v_1)^{p-2}\nabla[v_1+t(v_2-v_1)] \\ &=O(1)\bar u_\bve^{p-3}(|v_1|+|v_2|)|\nabla u_\bve|+O(1) \bar u_\bve^{p-2}(|\nabla v_1|+|\nabla v_2|).\end{align*}  In fact,  gradient bounds can  also be used for the region $\Omega\setminus\cup_{i=1}^K B_\tau(x_i)$ if $p\geq 2$. For $1<p\leq 2$,   one can use the following inequalty  $$||\phi|^{p-1}(x)-|\phi|^{p-1}(x')|\leq |\phi(x)-\phi(x')|^{p-1}\leq \|\nabla \phi\|^{p-1}_{C^0([x,x'])}|x-x'|^{p-1},$$ with $\phi=\bar u_\bve$ and $\phi=\bar u_\bve+v_1+t(v_2-v_1)$.   

We conclude the lemma. 

 \end{proof}
 
 From the above lemma we see that for a suitable choice of $M$ and $M_1$, the map   $v\mapsto G_\bve[f_\bve+Q(v)]$ is a contraction on the ball  $\mathscr{B}_{\ve_0,M}$   onto itself, for $\ve_0<<1$. Hence, we get a solution to \eqref{41} as desired. \\

 When $\Sigma$ is not discrete, one shows  in a similar way that the map  $v\mapsto G_\bve[f_\bve+Q(v)]$ is a contraction on the ball
 $$\mathscr{B}_{\ve_0,M}=\left\{  v\in C^{4,\alpha}_{\nu} :\|v\|_{4,\alpha,\nu}\leq M\ve_0^q\right\},$$ for some suitable $M>>1$. Here $q=\frac{p-5}{p-1}-\nu$, and the parameter $\nu$ satisfies $-\frac{4}{p-1}<\nu<\min\{\frac{p-5}{p-1},\, \Re{(\gamma_0^{--})}\}$.

 \section{Proofs of Theorems \ref{main-theorem} and \ref{main2-theorem}}
 
 This section is devoted to the completion of the proofs of the theorems stated in the introduction. In the previous section, we constructed {\sl for a fixed $\bar \varepsilon$} a solution of \eqref{eq-domain}. 
 \vspace{0.5cm} 
 
 \noindent{{\sl Proof of Theorem \ref{main-theorem}: }} As noticed already in \cite{Mazzeo-Pacard96}, the modifications are very minor. Recall the equation
 \begin{align} P_{g_0}u=u^\frac{n+4}{n-4}\quad\text{in }M\setminus \Sigma,\notag \end{align}
where $g_0$ is a fixed metric. Using Fermi coordinates and the rescaled Delaunay-type solutions shows that, since the linearized operator is the bilaplacian with {\sl lower order terms}, those terms disappear in the rescaling/blow-up and one can prove in an exactly parallel way that the linearization is uniformly surjective provided $\varepsilon$ is small enough. The geometric assumptions in the theorem ensures then that the constructed solution in the fixed point, is positive. 

\vspace{0.5cm}  \noindent{{\sl Proof of Theorem \ref{main2-theorem}: }} The statement follows from a combination of the solution constructed in the previous section and the application of the implicit function theorem as described in \cite{Mazzeo-Pacard96}. To get the infinite dimensionality of the solution space, we invoke as in \cite{Mazzeo-Pacard96}, the edge calculus in \cite{mazzeoEdge}.

 \section{Appendix: Singular radial solutions in $\R^N\setminus\{0\}$}
 
 In this appendix, we collect several results related to the ODE analysis of Delaunay-type solutions for our problem (see \cite{Gazzola,GWZ}). For sake of completeness, we provide the proofs. Furthermore, since we need rather fine properties of these solutions, we also straighten some of the arguments in the above-mentioned papers. 
  
 \begin{lem} Let $u$ be a  radial solution to \eqref{singu11} with $\frac{N}{N-4}<p<\frac{N+4}{N-4}$ as given by Theorem \ref{exists1}. Then \begin{align}\notag  r^4u^{p-1} (r)\leq \frac{p+1}{2}k(p,N).\end{align} \end{lem}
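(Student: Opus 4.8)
The claim is that the singular radial Delaunay-type solution $u$ from Theorem \ref{exists1} satisfies $r^4u^{p-1}(r)\le \frac{p+1}{2}k(p,N)$ for all $r>0$. The natural approach is to work with the Emden--Fowler (logarithmic cylindrical) change of variables: set $t=\log r$ and $v(t)=r^{\frac{4}{p-1}}u(r)$, which transforms the equation $\Delta^2 u=u^p$ into an autonomous fourth-order ODE in $t$,
\begin{align*}
v^{(iv)}+b_3 v'''+b_2 v''+b_1 v'+b_0 v = v^p,
\end{align*}
with constant coefficients $b_i=b_i(p,N)$ determined by expanding $\Delta^2(r^{-\frac{4}{p-1}}v(\log r))$; in particular $b_0=k(p,N)$ (this is exactly the constant appearing in the indicial-root computation, since $A_p=p\,k(p,N)$). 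The function $v$ is then a bounded, positive, periodic (or constant) solution, with $v\to c_p=k(p,N)^{1/(p-1)}$ as $t\to-\infty$ being one of the two constant equilibria ($v\equiv 0$ and $v\equiv c_p$), and the asserted bound becomes simply $v(t)^{p-1}\le \frac{p+1}{2}k(p,N)$, i.e. $v\le \big(\frac{p+1}{2}\big)^{\frac{1}{p-1}}c_p$.

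First I would record the conserved Hamiltonian energy for the autonomous equation. Multiplying by $v'$ and integrating, a fourth-order autonomous ODE of this form admits a first integral
\begin{align*}
H(v,v',v'',v''') = v'v''' - \tfrac12 (v'')^2 + (\text{quadratic in }v',v) - \tfrac{1}{p+1}v^{p+1} + \tfrac{b_0}{2}v^2 = \text{const},
\end{align*}
where the precise quadratic terms come from the $b_3,b_2,b_1$ contributions; the key structural point (as in \cite{GWZ,Gazzola}) is that along the relevant solution one can identify the value of the constant by evaluating at the equilibrium endpoint $v\equiv c_p$ (where $v'=v''=v'''=0$), giving $H=-\frac{1}{p+1}c_p^{p+1}+\frac{b_0}{2}c_p^2=c_p^{p+1}\big(\frac12-\frac{1}{p+1}\big)=\frac{p-1}{2(p+1)}c_p^{p+1}$. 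Then at a point $t_0$ where $v$ attains its maximum we have $v'(t_0)=0$ and $v''(t_0)\le 0$, so $-\frac12 v''(t_0)^2\le 0$ and, if one also controls the sign of the remaining term $v'v'''$ at that point (it vanishes since $v'=0$), the conservation law yields an inequality of the shape $-\frac{1}{p+1}v_{\max}^{p+1}+\frac{b_0}{2}v_{\max}^2\le \frac{p-1}{2(p+1)}c_p^{p+1}$, i.e. $\frac{b_0}{2}v_{\max}^2\le \frac{1}{p+1}v_{\max}^{p+1}+\frac{p-1}{2(p+1)}c_p^{p+1}$; dividing by $v_{\max}^2$ and using $b_0=k(p,N)$, $c_p^{p-1}=k(p,N)$ reduces this to an elementary scalar inequality in the single ratio $s:=v_{\max}^{p-1}/k(p,N)$, namely $\frac12 \le \frac{s}{p+1}+\frac{p-1}{2(p+1)}\cdot\frac1s$, whose solutions include $s\le \frac{p+1}{2}$ — which is exactly the desired bound.

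The step I expect to be the main obstacle is pinning down the exact algebra of the first integral, specifically verifying that the quadratic-in-$(v,v',v'')$ terms in $H$ either vanish or have a favorable sign when evaluated at the maximum point, so that the clean scalar inequality above really follows and is not polluted by an uncontrolled $v''(t_0)$ or cross term. The term $-\frac12(v'')^2$ at the maximum has the right sign (it only helps), and $v'v'''$ vanishes there; the genuine worry is any surviving $c\,(v'')\,v$ or $c\,v^2$ piece with the wrong sign. Here I would exploit the known monotonicity/positivity facts about $u$ recorded later in the excerpt ($u'<0$, $\Delta u<0$, $(\Delta u)'>0$), which translate into sign information on $v',v'',v'''$ along the trajectory, to kill or sign the offending terms; alternatively, since the statement only asserts the inequality (not sharpness), one can afford to be generous and bound the bad terms crudely. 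Once the scalar inequality $\frac12\le \frac{s}{p+1}+\frac{p-1}{2s(p+1)}$ is in hand, the conclusion $r^4u^{p-1}(r)=v^{p-1}\le \frac{p+1}{2}k(p,N)$ is immediate, and in fact this is consistent with (and slightly weaker than, as it should be) the related bound $p\,r^4u_1^{p-1}\le p\frac{p+1}{2}k(p,N)$ used in the injectivity estimate in Proposition \ref{inj-prop-1}.
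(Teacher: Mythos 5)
Your overall strategy — pass to Emden--Fowler variables $v(t)=r^{4/(p-1)}u(r)$, $t=\log r$, so that the bound becomes $v^{p-1}\le\frac{p+1}{2}k(p,N)$, and then exploit an energy attached to the autonomous fourth-order ODE — is the same family of ideas as the paper, and you correctly identify $b_0=k(p,N)$. (The paper first Kelvin-transforms and uses $\bar u(t)=r^{-4/(p-1)}u(1/r)$, but that is just $v(-t)$, so the difference is only a time reversal flipping the signs of the odd-order coefficients.) However, the central mechanism in your write-up is wrong, and it is not merely an algebraic sign to be chased down.

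\textbf{There is no conserved first integral.} Because the ODE contains the odd-order terms $b_3 v'''$ and $b_1 v'$ (coming from the $\frac{2(N-1)}{r}$-type coefficients in $\Delta^2$ on $\R^N$), multiplying by $v'$ does \emph{not} produce a total derivative. What one gets is
\begin{align*}
\frac{d}{dt}\Bigl(\tfrac{1}{p+1}v^{p+1}-\tfrac{b_0}{2}v^2-\tfrac{b_2}{2}(v')^2+\tfrac12(v'')^2\Bigr)
=(v'v''')' + b_3\,v'v''' + b_1\,(v')^2,
\end{align*}
and the last two terms are genuinely dissipative, not a boundary term. The paper's whole point (following Gazzola/Guo--Wei--Zou) is that after an integration by parts in $\int v'v'''$ these dissipation terms reduce to $-b_3\int(v'')^2 + b_1\int(v')^2$, and one must \emph{verify the signs} $K_3>0$, $K_1<0$ on the exact range $\frac{N}{N-4}<p<\frac{N+4}{N-4}$ (this is where the explicit formulas for $K_1,K_3$ and the factorization of $K_1$'s zeros at $p=\frac{N+4}{N-4}$ and $p=p_2^{\pm}$ are used). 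Assuming a conserved Hamiltonian sidesteps exactly the nontrivial part of the argument.

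\textbf{Wrong endpoint, and the resulting scalar inequality does not close.} You evaluate the ``constant'' at the equilibrium $v\equiv c_p$. But the dissipation inequality is usable only when anchored at the \emph{zero} equilibrium (in the paper's variable, $E(-\infty)=0$ because $\bar u,\bar u',\bar u'',\bar u'''\to 0$); anchoring at $v\equiv c_p$ gives only a lower bound on $E$ at the maximum, which is useless here. Relatedly, even taking your claimed conservation at face value, the inequality you end up with, after correcting the exponent, is
$\frac12\le\frac{s}{p+1}+\frac{p-1}{2(p+1)}\,s^{-2/(p-1)}$
(you wrote $1/s$ instead of $s^{-2/(p-1)}$), whose right-hand side tends to $+\infty$ as $s\to\infty$; it is satisfied by arbitrarily large $s$ and therefore cannot yield $s\le\frac{p+1}{2}$. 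By contrast, the paper's argument directly gives, at any critical point $t_1$ of $\bar u$, the clean inequality $\frac{1}{p+1}\bar u^{p+1}(t_1)-\frac{K_0}{2}\bar u(t_1)^2\le -\frac12\bar u''(t_1)^2\le 0$, i.e.\ $\bar u^{p-1}(t_1)\le\frac{p+1}{2}K_0$, with no ratio to optimize. One then observes that the supremum of $\bar u$ over $t\in\R$ is either attained at such a critical point or equals the limit $c_p$ at $+\infty$, and $c_p^{p-1}=k(p,N)<\frac{p+1}{2}k(p,N)$.

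So: keep the change of variables, but replace the ``conserved $H$'' by the dissipative energy, prove $K_3>0$ and $K_1<0$ on the given range of $p$, anchor the integration at the end where the solution tends to $0$, and use the resulting one-sided estimate at critical points together with the limiting value $c_p$ at the other end.
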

 \begin{proof} Set $$\tilde u(y)=|x|^{N-4}u(x),\quad x=\frac{y}{|y|^2}.$$  Then $\tilde u$ satisfies \begin{align}\label{eq-transformed}\D^2\tilde u=|y|^{\alpha}\tilde u^p\quad\text{in }\R^N,\quad \alpha:=(N-4)p-(N+4)\in (-4,0),  \end{align}  and $\tilde u$ does not have any singularity at the origin. Now we set $$\bar u(t)=r^\frac{4+\alpha}{p-1}\tilde u(r)=r^{-\frac{4}{p-1}}u(\frac1r),\quad t=\log r.$$ One checks that $$\bar u(t)\to0, \quad \bar u'(t)\to0,\quad \bar u''(t)\to0,\quad \bar u'''(t)\to0\quad \text{as }t\to-\infty. $$ Moreover, \begin{align*} \bar u''''(t)+K_3\bar u'''(t)+K_2\bar u''  +K_1\bar u'(t)+K_0\bar u(t)=\bar u^p(t), \end{align*} where  (see e.g. \cite{GWZ,Gazzola} ) \begin{align*} K_0&:= \frac{4+\alpha}{(p-1)^4}\left[2(N-2)(N-4)(p-1)^3+(4+\alpha)(N^2-10N+20)(p-1)^2   \right.  \\ &\qquad\left. -2(4+\alpha)^2(N-4)(p-1)+(4+\alpha)^3 \right] \\ &=k(p,N)  \\ K_1&:=-\frac{2}{(p-1)^3}\left[ (N-2)(N-4)(p-1)^3+(4+\alpha)(N^2-10N+20)(p-1)^2 \right.\\   &\left. \qquad   -3(\alpha^2+8\alpha+16)(N-4)(p-1)+2\alpha(\alpha^2+12\alpha+48)+128 \right] \\ &= -\frac{2}{(p-1)^3}\left[ (6N-N^2-8)p^3+(22N-N^2-56)p^2+(5N^2-14N-56)p-3N^2-8-14N  \right] ,\\  K_2&:=\frac{1}{(p-1)^2}\left[ (N^2-10N+20)(p-1)^2-6(4+\alpha)(N-4)(p-1)+6\alpha(\alpha+8)+96  \right], \\ K_3&:=\frac{2}{p-1}\left[ (N-4)(p-1)-2(4+\alpha) \right]=\frac{2}{p-1}\left[ N+4-p(N-4)\right].
 \end{align*} 
 It follows that $K_3>0$ for $\frac{N}{N-4}<p<\frac{N+4}{N-4}$, and   $K_1$ vanishes at the following points $$p_1:=\frac{N+4}{N-4},\quad  p_2^\pm:=\frac{6-N\pm 2\sqrt{N^2-4N+8}}{N-2}.$$  We also have that $p_2^-<0<p_2^+<\frac{N}{N-4}$.  In particular, as $K_1(\infty)>0$, we have that  $K_1<0$ for $\frac{N}{N-4}<p<\frac{N+4}{N-4}$. 
 
  Let us now define the energy 
  $$E(t):=\frac{1}{p+1}\bar u^{p+1}(t) -\frac{K_0}{2}\bar u(t)^2-\frac{K_2}{2}|\bar u'(t)|^2+\frac12|\bar u''(t)|^2.$$ If $\bar u'(t_1)=0$ for some $t_1\in\R$, then following the  proof of \cite[Lemma 6]{Gazzola} we  get $$E(t_1)-E(-\infty)=K_1\int_{-\infty}^{t_1}|\bar u'(t)|dt-K_3\int_{-\infty}^{t_1}|\bar u''(t)|dt\leq0.$$ Thus, $\bar u'(t_1)=0$ implies that $$\bar u^{p-1}(t_1)\leq \frac{p+1}{2}K_0.$$  The proof follows from this, and the asymptotic behavior of  $u$ at the origin.   \end{proof}

The next lemma provides uniqueness of solutions to \eqref{eq-transformed}. We start with the following lemma:

\begin{lem} Let $u$ be a non-negative bounded radial solution to \eqref{eq-transformed} on $B_1\setminus\{0\}$. Then  $u$ is H\"older continuous for every $\alpha\in (-4,0)$, and it is $C^2$ for   $\alpha\in (-2,0)$. Moreover,  \begin{align}\label{Delta-u-1} \lim_{r\to0^+} r^{-\alpha-2}\D u(r)  =c_{N,\alpha}u(0)^p ,\quad \alpha\in (-4,-2)\end{align} and \begin{align}\label{Delta-u-2} \lim_{r\to0^+} \frac{\D u(r)}{\log r}  =c_{N,\alpha}u(0)^p ,\quad  \alpha=2,\end{align} for some constant (independent of $u$)  $c_{N,\alpha}<0$.   \end{lem}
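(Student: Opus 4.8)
The plan is to treat the equation $\D^2 u = |y|^\alpha u^p$ on $B_1\setminus\{0\}$ as an iterated Poisson equation, writing $\D u = w$ and analyzing the two second-order problems $\D w = |y|^\alpha u^p$ and $\D u = w$ in turn, exploiting that the right-hand side $|y|^\alpha u^p$ is bounded by $C|y|^\alpha$ since $u$ is bounded. The first task is to bootstrap regularity of $u$ itself: since $\alpha > -4$, the fundamental solution convolution $\int_{B_1} |x-y|^{4-N}|y|^\alpha u(y)^p\,dy$ is well-defined and, by standard potential estimates (the Riesz potential of an $L^\infty_{loc}$-weighted $|y|^\alpha$ term), continuous, so $u$ extends continuously across $0$ with a definite value $u(0)$; iterating once more using Hölder continuity of the potential gives $u\in C^{0,\alpha'}$ for every $\alpha'\in(-4,0)$ in the sense claimed (really $u$ is Hölder in the usual sense, but the weighted spaces of the paper are what is being invoked). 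When $\alpha\in(-2,0)$ the extra two orders of decay of the weight let us run one more elliptic iteration to get $u\in C^2$ near $0$.

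Next I would pin down the asymptotics of $\D u$ near the origin. Since $u$ is continuous at $0$, write $|y|^\alpha u(y)^p = u(0)^p|y|^\alpha + |y|^\alpha(u(y)^p - u(0)^p)$; the second term is $o(|y|^\alpha)$ by continuity of $u$. So $\D u(r)$ behaves, up to a lower-order remainder, like the radial solution of $\D v = u(0)^p |y|^\alpha$. Solving this ODE explicitly, $v(r) = u(0)^p\, c_{N,\alpha} r^{\alpha+2}$ for $\alpha\ne -2$ with $c_{N,\alpha} = \frac{1}{(\alpha+2)(\alpha+N)}$ (which is negative precisely on $(-4,-2)$, giving the sign claim), and $v(r) = u(0)^p\, c_{N,\alpha}\log r$ with $c_{N,\alpha} = \frac{1}{N-2}$... wait, at $\alpha = -2$ the particular solution of $\D v = u(0)^p|y|^{-2}$ is radial with $v'(r) = u(0)^p r^{-1}/(N-2) \cdot$(constant), integrating to a $\log r$ term; I would compute the constant carefully and check its sign. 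The statement writes $\alpha = 2$ in \eqref{Delta-u-2}, but from $\alpha\in(-4,0)$ this must be a typo for $\alpha = -2$, and the borderline case is exactly where the power $r^{\alpha+2} = r^0$ degenerates into a logarithm — that is the natural reading and what the proof should address.

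The precise implementation is: from $\D u = w$ with $w$ radial and $w(r) = w(1) + \int_1^r t^{1-N}\big(\int_{|y|<t} |y|^\alpha u^p\,dy\big)\,dt$, isolate the contribution of $u(0)^p$ to the inner integral (which is $c_N' u(0)^p t^{\alpha+N}/(\alpha+N) \cdot t^{1-N}$ after integrating the weight over the ball, well-defined since $\alpha + N > 0$), then show the remainder integral is $o(r^{\alpha+2})$ (resp. $o(\log r)$) using $|u^p - u(0)^p| = o(1)$ and the fact that $\int_0^r t^{\alpha+1} o(1)\,dt = o(r^{\alpha+2})$ for $\alpha > -2$, while for $\alpha < -2$ the singular integral $\int_0^r$ must instead be compared against $\int$ from $0$, with the $r^{\alpha+2}$ term coming from the non-convergent part — here one integrates from $r$ down, and $\lim_{r\to 0} r^{-\alpha-2}\D u(r)$ picks out exactly the coefficient. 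The main obstacle is bookkeeping the two different regimes $\alpha\in(-4,-2)$ versus $\alpha = -2$ (the divergent-integral case, where the limit is extracted as a residue-type coefficient rather than a convergent potential) and getting the constant $c_{N,\alpha}$ and its negativity right in each; the regularity bootstrap itself is routine Calderón–Zygmund/Schauder with a locally bounded weighted forcing term, since $\alpha > -4 > -N$ keeps every Riesz potential convergent.
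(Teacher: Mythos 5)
Your proposal follows essentially the same route as the paper: represent $u$ through the Riesz-type potential $v(x)=\gamma_N^{-1}\int_{B_1}|x-y|^{4-N}|y|^\alpha u^p(y)\,dy$, read off the regularity of $v$ from the decay of the kernel, and then isolate the leading contribution $u(0)^p|y|^\alpha$ to obtain the asymptotics of $\D u$. Your extraction of the coefficient via the radial formula $w(r)=w(1)+\int_1^r t^{1-N}|S^{N-1}|^{-1}\bigl(\int_{|y|<t}|y|^\alpha u^p\,dy\bigr)dt$ is a minor repackaging of the paper's computation of $\D v$ after the change of variables $y\mapsto |x|y$; both yield $c_{N,\alpha}=\frac{1}{(\alpha+2)(\alpha+N)}$ in the power range.

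Two points you pass over quickly deserve explicit mention. First, having written $u=v+h$ with $h$ biharmonic and bounded on the punctured ball, one must invoke a removable-singularity theorem for bounded biharmonic functions to conclude $h$ is smooth across $0$ (this is exactly what the paper does); without it, ``$u$ extends continuously'' is not yet justified. Relatedly, your radial formula tacitly uses $\lim_{\ve\to0}\ve^{N-1}(\D u)'(\ve)=0$, i.e. the absence of the singular homogeneous modes $r^{2-N}$, $r^{4-N}$ in $u$, which again rests on boundedness of $u$ and that same removability argument. Second, your hesitation about the sign at the borderline exponent is well founded: the limit at $\alpha=-2$ is $\frac{u(0)^p}{N-2}>0$, so the blanket ``$c_{N,\alpha}<0$'' in the lemma (and implicitly in the paper's own computation, where $c_N<0$ is multiplied by $\log\tfrac1r=-\log r$) only holds for $\alpha\in(-4,-2)$; and, as you note, ``$\alpha=2$'' in \eqref{Delta-u-2} is a typo for $\alpha=-2$. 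With those two points made explicit, your argument matches the paper's.
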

\begin{proof}  We set $$v(x):=\frac{1}{\gamma_N}\int_{B_1}\frac{1}{|x-y|^{N-4}}|y|^\alpha u^p(y)dy,\quad h(x):=u-v(x),$$ where $\frac{1}{\gamma_N}\frac{1}{|x|^{N-4}}$ is a fundamental solution of $\D^2$ in $\R^N$. Since $u$ is bounded, one easily gets that $v$ is H\"older continuous for  $\alpha\in (-4,0)$, and differentiating under the integral sign, $v\in C^2$ for $\alpha\in (-2,0)$. Thus, $h$ is a bounded biharmonic function  on $B_1\setminus\{0\}$. Therefore, the singularity at zero is removable, and  $h$ is smooth in $B_1$. This completes the proof of regularity of $u$. 

Now we prove \eqref{Delta-u-1}. We fix $0<\delta<1$ such that $\alpha\delta-\alpha-2>0$.  Using that $u$ is continuous, we estimate for $r=|x|\neq 0$ \begin{align*}\D v(x)&=c_Nu(0)^p \int_{|y|<r^\delta} \frac{|y|^\alpha}{|x-y|^{N-2}} (1+o(1))dy+O(1) \int_{1>|y|>r^\delta} \frac{|y|^\alpha}{|x-y|^{N-2}}dy \\ &= c_Nu(0)^p \int_{|y|<r^\delta} \frac{|y|^\alpha}{|x-y|^{N-2}} (1+o(1))dy+O(1)r^{\delta\alpha}, \end{align*} where $o(1)\to0$ uniformly in $y\in B_{r^\delta}$ as $r\to0$. Using a change of variable $y\mapsto |x|y$ we obtain \begin{align*}  \int_{|y|<r^\delta} \frac{|y|^\alpha}{|x-y|^{N-2}}dy&=r^{2+\alpha} \int_{|y|<r^{\delta-1}} \frac{|y|^\alpha}{|\frac{x}r-y|^{N-2}}dy= r^{2+\alpha} \int_{\R^N} \frac{|y|^\alpha}{|\frac{x}r-y|^{N-2}}dy +o(1)r^{2+\alpha},\end{align*} where the last integral is finite as $-4<\alpha<-2$, and  $o(1)\to0$ as $r\to\infty$.  
Combining these estimates, and as $h$ is smooth,   we get \eqref{Delta-u-1}.

To prove \eqref{Delta-u-2} we fix $0<\ve<<1<<R<\infty$. As before we would get $$\D v(x)=c_Nu(0)^p \int_{|y|<\ve} \frac{|y|^{-2}}{|x-y|^{N-2}} (1+o_\ve(1))dy+O_\ve(1),$$ and after a change of variable   \begin{align*} \int_{|y|<\ve} \frac{|y|^{-2}}{|x-y|^{N-2}} dy&=\int_{|y|<\frac\ve r} \frac{|y|^{-2}}{|\frac xr-y|^{N-2}}dy=\int_{R<|y|<\frac\ve r} \frac{|y|^{-2}}{|\frac xr-y|^{N-2}} dy+ O_R(1). \end{align*} Since  $$\frac{1}{|\frac xr-y|^{N-2}}=\frac{1}{|y|^{N-2}}(1+o_R(1))\quad\text{for }|y|\geq R>>1,$$ we have    \begin{align*} \int_{|y|<\ve} \frac{|y|^{-2}}{|x-y|^{N-2}} dy =|S^{N-1}|(1+o_R(1)) \log\frac1r+O_\ve(1)+O_R(1) .\end{align*}
Combining these estimates and first taking $r\to0^+$, and then taking $\ve\to0^+, \, R\to\infty$ we obtain \eqref{Delta-u-2}. 

\end{proof}

Next we prove uniqueness of  radial solutions to \eqref{eq-transformed}. We shall use the following identity: \begin{align} \label{int-identity}w(r_2)-w(r_1)=\int_{r_1}^{r_2}\frac{1}{|S^{N-1}|t^{N-1}}\int_{B_t}\D w(x)dxdt,\quad w \text{ is radial}. \end{align}

\begin{lem} Let $u_1,u_2$ be two non-negative bounded radial solutions to \eqref{eq-transformed} on $\R^N\setminus\{0\}$ with $\alpha\in (-4,0)$.  If $u_1(0)=u_2(0)$ then $u_1=u_2$ on $\R^N$.\end{lem}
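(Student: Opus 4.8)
\emph{Proof proposal.} I would set $w:=u_1-u_2$; by the continuity of $u_1,u_2$ at the origin (from the previous two lemmas) and the hypothesis $u_1(0)=u_2(0)$ we have $w(0)=0$, and $w$ is bounded on $\R^N$ since each $u_i$ is. Writing $u_1^p-u_2^p=c(x)\,w$ with $c(x):=p\int_0^1(tu_1+(1-t)u_2)^{p-1}\,dt$, which is \emph{non-negative} (here $p>1$ and $u_i\ge0$) and bounded, $w$ solves the linear edge-type equation $\D^2 w=|x|^\alpha c(x)\,w$ in $\R^N\setminus\{0\}$. The plan has three steps: (i) show that near the origin $\D w$ extends continuously, with a finite value $b:=\D w(0)$ and $w(r)=\tfrac{b}{2N}r^2+o(r^2)$, $w'(0)=0$; (ii) a comparison argument forces $b=0$; (iii) with $b=0$ a renewed expansion shows $w$ vanishes to infinite order at $0$, whence $w\equiv0$. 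Throughout I work with radial profiles and the identity \eqref{int-identity}, which I apply both to $w$ and --- since $\D u_i$ is radial with $\D(\D u_i)=|x|^\alpha u_i^p$ --- to $\D w$.

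\textbf{Step (i).} Starting from $|w|\le C$, hence $|\D^2 w|\le C|x|^\alpha|w|$, I would feed this into \eqref{int-identity} for $\D w$, using $\int_{B_t}|x|^\alpha\,dx\lesssim t^{N+\alpha}$ (finite because $N+\alpha>0$), to get $|\D w(r)|\lesssim 1+r^{\alpha+2}$ near $0$ (up to a logarithm when $\alpha=-2$); feeding \emph{that} into \eqref{int-identity} for $w$ and using $w(0)=0$ gives $|w(r)|\lesssim r^{\min(2,\alpha+4)}$. Since $\alpha+4>0$, iterating this pair of steps raises the power of $r$ controlling $|w|$ by $\alpha+4$ each round, so after finitely many rounds $\D w$ is bounded near the origin; one further round yields $\D w(r)\to b$ as $r\to0^+$ for some finite $b$, together with $w(r)=\tfrac{b}{2N}r^2+o(r^2)$, $(\D w)'(r)=O(r^{\alpha+3})$ and $w'(r)=O(r)$, so in particular $r^{N-1}(\D w)'(r)\to0$ and $r^{N-1}w'(r)\to0$ as $r\to0^+$. (This recovers --- and, for $\alpha\le-2$, refines --- the asymptotics of the previous lemma; the mechanism is precisely that $u_1(0)=u_2(0)$ cancels the leading singular term $c_{N,\alpha}u_i(0)^p r^{\alpha+2}$ of $\D u_i$.)

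\textbf{Steps (ii)--(iii).} After possibly swapping $u_1$ and $u_2$ I may assume $b\ge0$; suppose $b>0$. Then $w>0$ on some interval $(0,\delta)$; set $R:=\sup\{r>0:\ w>0\text{ on }(0,r)\}$. On $(0,R)$ one has $(r^{N-1}(\D w)')'=r^{N-1}|x|^\alpha c\,w\ge0$, so $r^{N-1}(\D w)'$ is non-decreasing and, tending to $0$ at $0^+$, is $\ge0$; hence $\D w$ is non-decreasing on $(0,R)$ and $\D w\ge b>0$ there. Then $(r^{N-1}w')'=r^{N-1}\D w\ge b\,r^{N-1}$, so $r^{N-1}w'(r)\ge b\,r^N/N$, i.e. $w'(r)\ge\tfrac{b}{N}r>0$; thus $w$ is strictly increasing and positive on $(0,R)$, which by continuity forces $R=\infty$, whence $w(r)\ge\tfrac{b}{2N}r^2\to\infty$, contradicting the boundedness of $w$. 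Therefore $b=0$. Now Step (i) with $b=0$ gives $w(r)=o(r^2)$; re-running the bootstrap from this improved bound again raises the power by $\alpha+4>0$ at each round and --- since $\D w(0)=0$ --- never reintroduces a constant term, so $w$ vanishes to infinite order at the origin. Finally, by the standard indicial-root analysis for $\D^2-|x|^\alpha c(x)$ (the potential $|x|^\alpha c$ is lower order than $\D^2$ as $\alpha>-4$, so the indicial roots are those of $\D^2$, namely $0,2,2-N,4-N$, all real, and solutions are polyhomogeneous), a solution flat at $0$ must vanish on a neighborhood of $0$; the classical uniqueness theorem for the regular ODE on $(0,\infty)$ then propagates $w\equiv0$ to all of $\R^N$.

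\textbf{Main obstacle.} The heart of the matter is Step (i), uniformly over $\alpha\in(-4,0)$: when $\alpha\le-2$ each $\D u_i$ is individually unbounded at the origin (of order $r^{\alpha+2}$, resp. $\log r$ when $\alpha=-2$), so one genuinely needs the cancellation produced by $u_1(0)=u_2(0)$ to upgrade $\D w$ to a function continuous at the origin; carrying this out cleanly amounts to iterating \eqref{int-identity} while tracking which integrals converge at $r=0$ --- they all do, thanks to $\alpha>-4$ --- and a suitably refined version of the previous lemma would serve equally well. Once this is secured, Steps (ii)--(iii) are soft: a maximum-principle-type propagation using only $c\ge0$ and the boundedness of $w$, followed by the indicial rigidity at the origin.
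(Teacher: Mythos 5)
Your proposal follows the same three-stage skeleton as the paper's proof: (i) bootstrap using the integral identity \eqref{int-identity} until $\D w$ admits a finite limit $b$ at the origin, (ii) rule out $b\neq 0$ by a positivity-propagation argument that contradicts boundedness, and (iii) with $b=0$, use the bootstrap to kill $w$ near the origin and then propagate by ODE uniqueness. Steps (i) and (ii) are sound and match the paper (your comparison argument in (ii) is exactly the ``repeated use of \eqref{int-identity}'' the paper invokes when it assumes $a>0$ and derives $\bar u(x)\gtrsim a|x|^2$; assuming $b\ge 0$ up to swapping $u_1,u_2$ is the same as the paper's ``the case $a<0$ is similar''). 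The paper organizes (i) via the decomposition $u_i=v_i+h_i$ with $v_i$ a Riesz-potential integral and $h_i$ smooth biharmonic, while you work directly with the radial integral identity --- these are equivalent.

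The one place I would push back is the end of Step (iii). You deduce that $w$ vanishes to infinite order and then invoke polyhomogeneity/Frobenius at the regular singular point to conclude that a flat solution vanishes near $0$. But the potential $c(x)=p\int_0^1(tu_1+(1-t)u_2)^{p-1}\,dt$ inherits only the regularity of $u_1,u_2$ (H\"older for $\alpha\in(-4,-2]$, $C^2$ for $\alpha\in(-2,0)$; never analytic), so a Frobenius-type polyhomogeneous expansion is not available, and ``flat $\Rightarrow$ zero'' is not automatic for a fourth-order equation with rough coefficients. The clean fix --- and what the paper actually does --- is to turn your iteration into a genuine contraction: with $\D w(0)=0$ and $w(x)=o(|x|^2)$, two applications of \eqref{int-identity} give $r^{-2}|w(r)|\le C\,r^{4+\alpha}\sup_{0<t<r}t^{-2}|w(t)|$, and since $4+\alpha>0$ the factor $Cr^{4+\alpha}$ is $\le\tfrac12$ for $r\le r_0$; taking the supremum over $r\le r_0$ then forces $\sup_{0<t\le r_0}t^{-2}|w(t)|=0$, i.e.\ $w\equiv 0$ on $(0,r_0]$, after which the regular ODE on $(r_0,\infty)$ finishes the job. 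In fact your own bootstrap already contains this: the constant gained at each round is of size $\sim(\gamma_k)^{-4}$, which tends to $0$, so if you track it you get $w\equiv 0$ near the origin without any appeal to polyhomogeneity. So the gap is in the justification rather than in the method; I would replace the last three sentences of Step (iii) with the explicit contraction estimate.
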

\begin{proof} 
Let us first assume that \begin{align}\label{assump}\lim_{|x|\to0^+}\D \bar u(x)=0,\quad \bar u:=u_1-u_2 .\end{align} Then using \eqref{int-identity} we obtain \begin{align} \label{baru}\bar u(x)=o(1)|x|^2\quad\text{as }|x|\to0.\end{align}  By \eqref{int-identity}-\eqref{assump} \begin{align*}\D \bar u(r)&=\int_0^r\frac{1}{|S^{N-1}|t^{N-1}}\int_{B_t}|x|^\alpha(u_1^p(x)-u_2^p(x))dxdt \\ &=o(1)|\bar u|_r \int_0^r \frac{1}{t^{N-1}}\int_{B_t}|x |^{2+\alpha}\\&=o(1)|\bar u|_rr^{4+\alpha},\end{align*} where we have set  $|\bar u|_r:=\sup_{0<t<r}t^{-2}|\bar u(t)|$.  This leads to \begin{align*}  \bar u(r)=o(1)|\bar u|_r\int_{0}^r\frac{1}{t^{N-1}}\int_{B_t}|x|^{4+\alpha}dxdt =o(1)|\bar u|_r r^{6+\alpha},\end{align*}  which gives $$r^{-2}|\bar u(r)|\leq\frac12 \sup_{0<t<r}t^{-2}|\bar u(t)|\quad\text{for every }0<r\leq r_0,$$ for some $r_0>0$ sufficiently small. From this and  \eqref{baru} we get   that  $\bar u\equiv 0$ in a small neighborhood of the origin, and consequently we have $\bar u\equiv 0$ in $\R^N$. 

It remains to prove \eqref{assump}, and we do that in few steps. \\

\noindent\textbf{Step 1} Assume that $\bar u(x)=O(1)|x|^\gamma$ for some $\gamma\geq 0$. Then setting  $\tilde\gamma:=\alpha+\gamma+2$   we have \begin{align*}\D \bar u(x)=O(1)\left\{ \begin{array}{ll}   |x|^{\tilde\gamma } &\quad\text{if }\tilde\gamma<0 \\ \log |x| &\quad\text{if }\tilde\gamma=0 \\ 1 &\quad\text{if } \tilde\gamma>0,\end{array}\right.  \quad  \bar u(x)=O(1)\left\{ \begin{array}{ll}   |x|^{\tilde\gamma+2} &\quad\text{if } \tilde\gamma<0 \\ |x|^2 \log |x| &\quad\text{if } \tilde\gamma=0 \\|x|^2 &\quad\text{if } \tilde\gamma>0.\end{array}\right. \end{align*}

We set $\bar v:=v_1-v_2$, $\bar h:=h_1-h_2$ where   $$v_i(x):=\frac{1}{\gamma_N}\int_{B_1}\frac{1}{|x-y|^{N-4}}|y|^\alpha u_i^p(y)dy,\quad h_i:=u_i-v_i,\,i=1,2.$$ Then using that $|u_1^p(x)-u_2^p(x)|\leq C|\bar u(x)|\leq C|x|^\gamma$ \begin{align*} \D\bar v(x)&=c_n\int_{B_1}\frac{|y|^\alpha}{|x-y|^{N-2}}(u_1^p(y)-u_2^p(y)) dy \\ &=O(1)\int_{B_1}\frac{|y|^{\alpha+\gamma}}{|x-y|^{N-2}}dy\\&=O(1)\left\{ \begin{array}{ll}   |x|^{\alpha+\gamma+2} &\quad\text{if }\alpha+\gamma+2<0 \\ \log |x| &\quad\text{if }\alpha+\gamma+2=0 \\ 1 &\quad\text{if }\alpha+\gamma+2>0,\end{array}\right. \end{align*} thanks to Lemma \ref{lem9.4}.   First part of Step 1 follows as $\bar h$ is smooth in $B_1$.  The second part follows immediately by the first part and the identity \eqref{int-identity}. \\

\noindent\textbf{Step 2} The function $\bar u$ is $C^2$. 

Since $\bar u(x)=O(1)$, we can  use Step 1 with $\gamma=0$, and deduce that $\bar u(x)=O(|x|^{4+\alpha})$ (or the other growths at $0$).  In fact,  we can repeat this process finitely many times to eventually get that $\bar u(x)=O(|x|^2)$. Then, as $\alpha>-4$,  from the integral representation of $\bar v$ it is easy to see that $\bar v$ is $C^2$, and consequently $\bar u$ is $C^2$.  \\

\noindent\textbf{Step 3} \eqref{assump} holds. 

Since $\bar u$ is $C^2$,  $a:=\lim_{|x|\to0}\D \bar u(x)$ exists. If $a>0$  then  a repeated use of \eqref{int-identity} yields that  $\D\bar u\geq a$ on $\R^N$. In particular, $\bar u(x)\geq 2Na |x|^2$ on $\R^N$, a contradiction as $\bar u$ is bounded. The case $a<0$ is similar.
\end{proof}

Proof of the following lemma is straight forward. 
\begin{lem} \label{lem9.4}For $q_1,q_2\in (0,N)$ there exists $c=c(N,q_1,q_2)>0$ such that for any $R>0$ we have $$\lim_{|x|\to0^+}|x|^{q_1+q_2-N}\int_{B_R}\frac{dy}{|x-y|^{q_1}|y|^{q_2}}=c\quad\text{ if }q_1+q_2>N, $$ and  $$\lim_{|x|\to0^+} -(\log |x|)^{-1}\int_{B_R}\frac{dy}{|x-y|^{q_1}|y|^{q_2}}=c\quad\text{ if }q_1+q_2=N, $$ \end{lem}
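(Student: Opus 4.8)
The statement to prove is Lemma \ref{lem9.4}. The plan is to establish the asymptotic behaviour of the integral $I(x) := \int_{B_R} \frac{dy}{|x-y|^{q_1}|y|^{q_2}}$ as $|x| \to 0^+$ by a standard rescaling argument, splitting into the two cases $q_1 + q_2 > N$ and $q_1 + q_2 = N$.

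First I would treat the case $q_1 + q_2 > N$. Substituting $y = |x| w$ gives
\[
I(x) = |x|^{N - q_1 - q_2} \int_{B_{R/|x|}} \frac{dw}{\left|\tfrac{x}{|x|} - w\right|^{q_1} |w|^{q_2}}.
\]
Writing $\omega := x/|x| \in S^{N-1}$, by rotational symmetry the integral $\int_{\R^N} \frac{dw}{|\omega - w|^{q_1}|w|^{q_2}}$ is independent of $\omega$; call its value $c = c(N,q_1,q_2)$. One must check this integral converges: near $w = 0$ the integrand is $\sim |w|^{-q_2}$, integrable since $q_2 < N$; near $w = \omega$ it is $\sim |w-\omega|^{-q_1}$, integrable since $q_1 < N$; and at infinity it is $\sim |w|^{-q_1-q_2}$, integrable since $q_1 + q_2 > N$. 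Then $|x|^{q_1+q_2-N} I(x) = \int_{B_{R/|x|}} \frac{dw}{|\omega-w|^{q_1}|w|^{q_2}} \to c$ as $|x| \to 0^+$ by dominated convergence (the domains $B_{R/|x|}$ exhaust $\R^N$), which is the first claim.

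For the case $q_1 + q_2 = N$ the integral over all of $\R^N$ diverges logarithmically at infinity, producing the $\log$ factor. Here I would split $I(x) = \int_{B_R \cap \{|y| < 2|x|\}} + \int_{B_R \cap \{|y| \ge 2|x|\}}$. The first piece, after the rescaling $y = |x|w$, becomes $|x|^{N-q_1-q_2}\int_{\{|w|<2\}} \frac{dw}{|\omega - w|^{q_1}|w|^{q_2}} = O(1)$, a bounded quantity. In the second piece, $|x-y| \ge |y|/2$ and $|x - y| \le \tfrac32 |y|$, so the integrand is comparable to $|y|^{-q_1-q_2} = |y|^{-N}$, giving $\int_{2|x| \le |y| \le R} |y|^{-N}\, dy = |S^{N-1}| \log \frac{R}{2|x|} + O(1)$, and one checks more precisely using $\frac{1}{|x-y|^{q_1}} = \frac{1}{|y|^{q_1}}(1 + O(|x|/|y|))$ that the leading term is exactly $|S^{N-1}| \log\frac{1}{|x|}(1 + o(1))$. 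Dividing by $-\log|x|$ and letting $|x| \to 0^+$ kills the bounded terms and yields the constant $c = |S^{N-1}|$. The main obstacle — though a mild one — is making the error estimates in this second case genuinely uniform so that the $o(1)$ and $O(1)$ terms are negligible after dividing by $\log\frac{1}{|x|}$; this is routine once one isolates the annular region $\{|y| \gtrsim |x|\}$ where $|x-y| \sim |y|$.
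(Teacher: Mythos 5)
Your proof is correct. The paper itself gives no argument for this lemma (it is stated with the remark that the proof is ``straightforward''), so there is no proof in the source to compare against; your rescaling argument for $q_1+q_2>N$ (using rotational invariance of $\int\frac{dw}{|\omega-w|^{q_1}|w|^{q_2}}$ and monotone/dominated convergence as the rescaled ball exhausts $\R^N$) and the annular decomposition with the bound $|x-y|\sim|y|$ on $\{|y|\ge 2|x|\}$ for the borderline case $q_1+q_2=N$ are exactly the standard computations one would expect, and the constant $c=|S^{N-1}|$ in the logarithmic case is the correct one.
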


\begin{thm} \label{thm-4.6}There exists a positive radial solution $u\in C^0(\R^N)\cap C^4(\R^N\setminus\{0\})$ to \eqref{eq-transformed} such that $u$ is monotone decreasing and $u$ vanishes at infinity. In fact, $u(r)\leq C r^{-\frac{4+\alpha}{p-1}}$ at infinity.   \end{thm}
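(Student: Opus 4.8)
The plan is to construct the solution by an Emden--Fowler (phase--plane) analysis, in the spirit of \cite{Gazzola,GWZ}, reusing the energy identity from the first lemma of this appendix. Put $\beta:=\frac{4+\alpha}{p-1}=N-4-\frac{4}{p-1}\in(0,N-4)$ and, for a radial $u$, $\bar u(t):=r^{\beta}u(r)$, $t=\log r$; then $u$ solves \eqref{eq-transformed} on $\R^N\setminus\{0\}$ iff $\bar u$ solves the autonomous ODE $\bar u''''+K_3\bar u'''+K_2\bar u''+K_1\bar u'+K_0\bar u=\bar u^p$, with $K_0=k(p,N)>0$, $K_3>0$, $K_1<0$ on the given range of $p$. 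The equilibria are $\bar u\equiv0$ and $\bar u\equiv c_p$ with $c_p^{p-1}=K_0$, and the goal is a positive orbit that leaves $0$ as $t\to-\infty$ and stays bounded as $t\to+\infty$; back in the original variable $u(r)=r^{-\beta}\bar u(\log r)$ this produces a positive solution with $\bar u(t)\sim u(0)e^{\beta t}$ as $t\to-\infty$ (hence $u$ bounded near $0$) and $u(r)\le Cr^{-\beta}=Cr^{-\frac{4+\alpha}{p-1}}$ at infinity.

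Near $t=-\infty$ the picture is governed by the linearization at $\bar u\equiv0$, which is $\D^2w=0$ written in the variable $t$; its characteristic exponents are those of the bilaplacian shifted by $\beta$, namely $\{\beta,\ \beta+2,\ \beta+2-N,\ \beta+4-N\}$, and since $\beta+4-N=-\tfrac{4}{p-1}<0$ and $\beta+2-N<0$, the unstable manifold of $0$ is two dimensional with every orbit on it of the form $\bar u(t)=\sigma e^{\beta t}+O(e^{(\beta+2)t})$ as $t\to-\infty$. A contraction argument on $(-\infty,T]$ in an exponentially weighted norm produces such orbits, which are positive near $-\infty$; a time translation, which multiplies $\sigma$ by a positive factor, lets us normalize $\sigma=1$.

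The main obstacle is the forward continuation: showing the orbit is global, stays positive, and stays bounded. This is exactly where the energy
$$E(t):=\tfrac{1}{p+1}\bar u(t)^{p+1}-\tfrac{K_0}{2}\bar u(t)^2-\tfrac{K_2}{2}\bar u'(t)^2+\tfrac12\bar u''(t)^2$$
is used: since $\bar u$ and its derivatives vanish at $t=-\infty$ one has $E(-\infty)=0$, and, as in the first lemma, at any critical point $t_1$ of $\bar u$ one gets $E(t_1)=K_1\int_{-\infty}^{t_1}|\bar u'|\,dt-K_3\int_{-\infty}^{t_1}|\bar u''|\,dt\le0$, hence $\bar u(t_1)^{p-1}\le\tfrac{p+1}{2}K_0$. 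So the local maxima of $\bar u$ are uniformly bounded, which rules out finite-time blow-up and yields a global, bounded orbit (after also excluding that $\bar u$ is eventually monotone, again by $E$ together with $K_1<0$, $K_3>0$); keeping the orbit in the region $\bar u>0$, where the nonlinearity is well defined, is part of the same analysis. Finally one reads off the conclusions of the theorem from $u(r)=r^{-\beta}\bar u(\log r)$: the bound on $\bar u$ gives $u(r)\le Cr^{-\beta}$ and $u(r)\to0$ at infinity; $u$ is bounded near $0$, hence $u\in C^0(\R^N)$ by the regularity lemma of this appendix and $u\in C^4(\R^N\setminus\{0\})$ by interior elliptic estimates; and for monotonicity one sets $v:=\D u$, notes $\D v=|y|^{\alpha}u^p>0$ so that $r^{N-1}v'(r)=\int_0^r t^{N-1}\D v\,dt>0$ (i.e. $v$ is increasing) while $u(r)\le Cr^{-\beta}\to0$ forces $v(r)\to0$ at infinity, whence $v=\D u<0$ on $(0,\infty)$ and thus $r^{N-1}u'(r)=\int_0^r t^{N-1}\D u\,dt<0$, so $u$ is strictly decreasing.
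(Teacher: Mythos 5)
Your approach is genuinely different from the paper's. The paper does not argue via the phase plane at all: it first produces a positive radial solution $u_\lambda$ of the auxiliary Dirichlet problem $\Delta^2 u=\lambda|x|^\alpha(1+u)^p$ in $B_1$ by Schauder's fixed point theorem, uses a Pohozaev identity together with the Hardy--Sobolev inequality to show uniqueness of the minimal solution for small $\lambda$, invokes Rabinowitz's global bifurcation theorem to produce a sequence with $u_{\lambda_k}(0)\to\infty$, and then rescales and passes to the limit to obtain a positive bounded entire solution; the decay $u(r)\le Cr^{-\beta}$ then comes from the integral identity \eqref{int-identity}, not from boundedness of the Emden--Fowler variable. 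Your route replaces this by the construction of an orbit on the unstable manifold of the origin for the autonomous fourth order ODE satisfied by $\bar u(t)=r^\beta u(r)$. The linear analysis you carry out is correct: the characteristic exponents $\{\beta,\beta+2,\beta+2-N,\beta+4-N\}$ and the sign count giving a two--dimensional unstable manifold are right for $\tfrac{N}{N-4}<p<\tfrac{N+4}{N-4}$, and the regularity and monotonicity deductions at the end (via $v=\Delta u$ and the integral identity) are sound once you have a bounded positive solution.

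The gap is in the forward continuation. The energy identity $E(t_1)-E(-\infty)=K_1\int_{-\infty}^{t_1}(\bar u')^2\,dt-K_3\int_{-\infty}^{t_1}(\bar u'')^2\,dt\le 0$ is valid only at critical points $t_1$ of $\bar u$ (that is where the boundary terms $\bar u'\bar u'''+K_3\bar u'\bar u''$ drop out), so it gives a bound on $\bar u$ \emph{at its critical points}, nothing more. In particular it does not rule out the scenario where $\bar u$, which starts increasing since $\bar u\sim\sigma e^{\beta t}$ with $\sigma>0$, increases monotonically and reaches $+\infty$ in finite or infinite forward time without ever having a critical point; your parenthetical ``after also excluding that $\bar u$ is eventually monotone, again by $E$ together with $K_1<0$, $K_3>0$'' names exactly the missing step but does not supply an argument, and for a general $t$ one only gets $E(t)\le \bar u'(t)\bar u'''(t)+K_3\bar u'(t)\bar u''(t)$, which is not a bound. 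There is also a selection problem you do not address: even after normalizing $\sigma=1$ by time translation, the unstable manifold leaves a one--parameter family of orbits (the coefficient of the $e^{(\beta+2)t}$ term), and it is not the case that every one of them remains positive and bounded --- typically a shooting or connectedness argument is needed to pick out a good orbit, and this is precisely the hard part that the paper's bifurcation/blow--up construction is designed to avoid. As written, the proposal does not yield Theorem \ref{thm-4.6}.
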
 

To prove the theorem we consider the auxiliary equation 
\begin{align} \label{aux}\left\{\begin{array}{ll} \D^2 u=\lambda |x|^\alpha(1+u)^p &\quad\text{in }B_1\\ u={\frac{\partial u}{\partial \nu}}=0 &\quad\text{on }\partial B_1.  \end{array}\right. \end{align} Next we prove existence of a positive radial solution to \eqref{aux} for some $\lambda>0$. This will be done by Schauder fixed point theorem on the space  $$X:= C^0_{rad}(\bar B_1),\quad \|u\|:=\|u\|_{C^0(\bar B_1)}.$$ We define $T:X\to X$, $u\mapsto\bar u$ where we have set \begin{align}\label{bar-u} \bar u(x):=\int_{B_1}G(x,y)|y|^\alpha  (1+|u(y)|)^pdy,\end{align} where   $G=G(x,y)$ is the Green function for $\D^2$ on $B_1$ with Dirichlet boundary conditions. It is easy to see that $T$ is well-defined, and in fact, $T$ is   compact.  Therefore, there eixsts $0<t_0\leq 1$ and $u_0\in X$ such that $tTu_0=u_0$. Then  $u_0$ is  positive, monotone decreasing, and it satisfies \eqref{aux} with $\lambda=t_0$. 

As $u_0$ is a super soulution to \eqref{aux} for $0<\lambda\leq t_0$, one can prove existence of positive, radially symmetric, monotone decreasing,   minimal solution $u=u_\lambda$ to \eqref{aux} for every $0<\lambda\leq t_0$. 

Next we prove uniqueness of the minimal solutions  for $\lambda>0$ small.  In order to do that let us  recall the following Pohozaev identity from \cite[Theorem 7.27]{GGS}. 

\begin{lem}Let $u$ be a solution to \begin{align}\label{26-f}\left\{ \begin{array}{ll} \D^2u=f(x,u) &\quad\text{in }\Omega\subset\R^N\\ u=\frac{\partial u}{\partial\nu}=0&\quad\text{on }\partial\Omega. \end{array}\right. \end{align} Then setting $F(x,t)=\int_0^tf(x,s)ds$ we have $$\int_{\Omega}\left[   F(x,u)+\frac1Nx\cdot F_x(x,u)-\frac{N-4}{2N}(\D u)^2  \right]dx=\frac{1}{2N}\int_{\partial \Omega}(\D u)^2(x\cdot\nu)d\sigma.$$ \end{lem}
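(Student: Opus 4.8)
The plan is to establish this classical Pohozaev-type identity for the bilaplacian by testing the equation against the dilation vector field $x\cdot\nabla u$; the identity and its proof go back to \cite[Theorem 7.27]{GGS}, and I sketch the computation here for completeness. First I would multiply the equation $\D^2 u=f(x,u)$ by $x\cdot\nabla u$ and integrate over $\Omega$, so that it remains to evaluate the two integrals $\int_\Omega(\D^2 u)(x\cdot\nabla u)\,dx$ and $\int_\Omega f(x,u)(x\cdot\nabla u)\,dx$ separately.

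For the forcing term, the chain rule gives $f(x,u)(x\cdot\nabla u)=x\cdot\nabla_x\bigl[F(x,u(\cdot))\bigr]-x\cdot F_x(x,u)$, where $F(x,t)=\int_0^t f(x,s)\,ds$ and $\nabla_x$, $F_x$ denote differentiation in the $x$-variable only. Integrating the divergence term by parts and using $u=0$ on $\partial\Omega$, so that $F(x,u)\equiv F(x,0)\equiv0$ there, yields
\begin{align*}
\int_\Omega f(x,u)(x\cdot\nabla u)\,dx=-N\int_\Omega F(x,u)\,dx-\int_\Omega x\cdot F_x(x,u)\,dx .
\end{align*}

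For the left-hand side, I would integrate by parts twice for $\D^2$ with test function $v:=x\cdot\nabla u$. The boundary conditions $u=\partial_\nu u=0$ on $\partial\Omega$ force $\nabla u\equiv0$ on $\partial\Omega$, hence $v=0$ there, and they also give $\nabla^2 u=(\D u)\,\nu\otimes\nu$ on $\partial\Omega$, from which $\partial_\nu v=(x\cdot\nu)\,\D u$ on $\partial\Omega$. Therefore
\begin{align*}
\int_\Omega(\D^2 u)(x\cdot\nabla u)\,dx=\int_\Omega \D u\,\D(x\cdot\nabla u)\,dx-\int_{\partial\Omega}(\D u)^2(x\cdot\nu)\,d\sigma .
\end{align*}
Next I would use the identities $\D(x\cdot\nabla u)=2\,\D u+x\cdot\nabla(\D u)$ and $\int_\Omega \D u\,(x\cdot\nabla\D u)\,dx=\tfrac12\int_{\partial\Omega}(\D u)^2(x\cdot\nu)\,d\sigma-\tfrac{N}{2}\int_\Omega(\D u)^2\,dx$ to obtain $\int_\Omega \D u\,\D(x\cdot\nabla u)\,dx=\tfrac{4-N}{2}\int_\Omega(\D u)^2\,dx+\tfrac12\int_{\partial\Omega}(\D u)^2(x\cdot\nu)\,d\sigma$. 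Equating the two sides of the tested identity, collecting terms and dividing by $N$ then gives exactly the stated formula.

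The only genuinely delicate step is the treatment of the boundary contributions, and in particular the identity $\partial_\nu(x\cdot\nabla u)=(x\cdot\nu)\,\D u$ on $\partial\Omega$: this rests on the fact that both $u$ and its full gradient vanish on $\partial\Omega$, so that the Hessian of $u$ there reduces to its purely normal part, which equals $\D u$. All the integrations by parts are justified provided $u\in C^4(\overline\Omega)$; in the application $\Omega=B_1$ is smooth and $f(x,t)=\lambda|x|^\alpha(1+t)^p$ is smooth in $t$ and, since $\alpha>-4$, sufficiently integrable in $x$ (so that $\D u$ is bounded near the origin for $\alpha\in(-4,-2)$ and $u\in C^2$ for $\alpha\in(-2,0)$, cf. the regularity lemma above), so that elliptic regularity up to the boundary supplies the needed smoothness, or else one argues by approximating $u$ by smooth functions and passing to the limit.
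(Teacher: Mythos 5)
Your computation is correct and is the standard Pohozaev argument for the bilaplacian with Dirichlet conditions: test against $x\cdot\nabla u$, use $f(x,u)(x\cdot\nabla u)=x\cdot\nabla_x[F(x,u)]-x\cdot F_x(x,u)$ on the right, and on the left integrate by parts twice, using that $u=\partial_\nu u=0$ forces $\nabla u=0$ and $\nabla^2 u=(\D u)\,\nu\otimes\nu$ on $\partial\Omega$ (hence $v=x\cdot\nabla u=0$ and $\partial_\nu v=(x\cdot\nu)\D u$ there), together with $\D(x\cdot\nabla u)=2\D u+x\cdot\nabla(\D u)$. Note that the paper does not reprove the lemma at all — it simply cites \cite[Theorem 7.27]{GGS} — so there is nothing in the paper to diverge from; your write-up supplies the standard derivation that the citation points to, and the bookkeeping of signs and boundary terms all checks out.
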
 

Since $\int_{\Omega}uf(x,u)dx=\int_{\Omega}|\D u|^2dx$, and $x\cdot\nu=1$ on $\partial\Omega$ for $\Omega=B_1$,   the above Pohozaev identity leads to  \begin{align}\label{27}  \int_{B_1}\left[   F(x,u)+\frac1Nx\cdot F_x(x,u)-  \sigma uf(x,u)\right]dx  \geq \left(  \frac{N-4}{2N}-\sigma\right)\int_{B_1}(\D u)^2dx. \end{align} We also need the following Hardy-Sobolev inequality:  \begin{align}\label{H-S}   \left(\int_{B_1}\frac{|u|^{\frac{2(N-\beta)}{N-4}}}{|x|^\beta}dx\right)^\frac{N-4}{N-\beta}\leq c_0\int_{B_1}|\D u|^2dx\quad\text{for }u\in H^2_0(B_1),\end{align} where $B_1\subset\R^N,\,N\geq 5$ and $0<\beta<4$. This can  be derived  from the  Sobolev inequality $\|u\|_{L^{2^*}}\leq C\|\D u\|_{L^2}$ ($2^*:=\frac{2N}{N-4}$), Hardy inequality $\|\frac{u}{|x|^2}\|_{L^2}\leq C\|\D u\|_{L^2}$ and H\"older inequality.  

Using \eqref{27} and \eqref{H-S} one can prove the following lemma, see e.g. \cite[Proposition 2.2]{Ao1}. 
\begin{lem} \label{unique-minimal} There exists $\lambda_0>0$ such that for every $\lambda\in(0,\lambda_0]$ the minimal solution $u=u_\lambda$ to \eqref{aux} is the unique solution on the space $C^0(\bar B_1)$.  \end{lem}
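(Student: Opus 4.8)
The plan is to derive uniqueness from a uniform (in $\lambda$) a priori bound on solutions of \eqref{aux}: first an energy bound coming from the Pohozaev identity, then an upgrade to a $C^0$-bound that is $O(\lambda)$, and finally a contraction estimate on the difference of two solutions.

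\textbf{Step 1: Pohozaev a priori bound.} I would apply the inequality \eqref{27} to \eqref{aux} with $f(x,u)=\lambda|x|^\alpha(1+u)^p$, so that $F(x,u)=\tfrac{\lambda|x|^\alpha}{p+1}\bigl[(1+u)^{p+1}-1\bigr]$ and $x\cdot F_x=\alpha F$, choosing $\sigma=\tfrac{N-4}{2N}$ so that the right-hand side of \eqref{27} vanishes. Using $\int_{B_1}(\Delta u)^2=\int_{B_1}uf$ and abbreviating $I=\int_{B_1}|x|^\alpha(1+u)^{p+1}$, $J=\int_{B_1}|x|^\alpha(1+u)^p$, $V=\int_{B_1}|x|^\alpha\,dx$ (finite since $\alpha>-4>-N$), the identity reduces to
$$(N+\alpha)\,(I-V)\ \ge\ \tfrac{(N-4)(p+1)}{2}\,(I-J).$$
The key observation is that, since $\alpha=(N-4)p-(N+4)$, the standing hypothesis $p<\tfrac{N+4}{N-4}$ is precisely equivalent to $(N+\alpha)-\tfrac{(N-4)(p+1)}{2}<0$; thus the coefficient of $I$ has the sign that lets one absorb $I$, and together with H\"older's inequality $J\le I^{p/(p+1)}V^{1/(p+1)}$ (in the measure $|x|^\alpha dx$) this yields $I\le C_\ast$ with $C_\ast$ independent of $\lambda$. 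Multiplying the equation by $u$ then gives $\int_{B_1}(\Delta u)^2=\lambda\int_{B_1}|x|^\alpha u(1+u)^p\le\lambda I\le C_\ast\lambda$, i.e. $\|u\|_{H^2_0(B_1)}^2\le C_\ast\lambda$ for every solution. (The identity \eqref{27} is used on $B_1\setminus B_\varepsilon$ with $\varepsilon\to0$, the terms on $\partial B_\varepsilon$ vanishing by the regularity of $u$ at the origin established in the preceding lemmas.)

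\textbf{Step 2: from energy to $C^0$.} Writing $u=\lambda\,(\Delta^2)^{-1}\!\bigl(|x|^\alpha(1+u)^p\bigr)$ with the positive Dirichlet Green function of $\Delta^2$ on $B_1$ (Boggio's principle), I would bootstrap the weighted integrability of $u$, using the Hardy--Sobolev inequality \eqref{H-S}, to improve the estimate of Step 1 to $\|u\|_{C^0(\bar B_1)}\le C\lambda$ for all solutions, with $C$ independent of $\lambda\le1$; this is carried out exactly as in \cite[Proposition 2.2]{Ao1}. I expect this to be the main obstacle: the problem is supercritical relative to the weighted Sobolev exponent, so the regularity gain has to be extracted with care, and one must also control the behaviour at the origin.

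\textbf{Step 3: contraction.} If $u,v\in C^0(\bar B_1)$ both solve \eqref{aux} for the same $\lambda\le1$ (both positive, again by Boggio), then $w:=u-v\in H^2_0(B_1)$ satisfies $\Delta^2 w=\lambda|x|^\alpha[(1+u)^p-(1+v)^p]$, and by the mean value theorem together with Step 2,
$$\bigl|(1+u)^p-(1+v)^p\bigr|\ \le\ p\,(1+\|u\|_\infty+\|v\|_\infty)^{p-1}|w|\ \le\ p\,(1+2C\lambda)^{p-1}|w|.$$
Testing with $w$ and using the Rellich inequality $\int_{B_1}|x|^\alpha w^2\le\int_{B_1}|x|^{-4}w^2\le C_{\mathrm{HR}}\int_{B_1}(\Delta w)^2$ (valid for $N\ge5$, since $\alpha>-4$), one obtains $\int_{B_1}(\Delta w)^2\le \lambda p(1+2C\lambda)^{p-1}C_{\mathrm{HR}}\int_{B_1}(\Delta w)^2$. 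Choosing $\lambda_0>0$ (and $\lambda_0\le t_0$) so small that $\lambda_0 p(1+2C\lambda_0)^{p-1}C_{\mathrm{HR}}<1$ forces $\Delta w\equiv0$, hence $w\equiv0$, for every $\lambda\in(0,\lambda_0]$. Since $u_\lambda$ is a solution in $C^0(\bar B_1)$, it is then the only one, which proves the lemma.
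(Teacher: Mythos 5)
Your proposal is correct and matches the approach the paper indicates (the paper itself gives no proof — it just says to use the Pohozaev inequality \eqref{27}, the Hardy--Sobolev inequality \eqref{H-S}, and to follow Proposition~2.2 of \cite{Ao1}). You have filled in the structure sensibly. Step 1 is right: with $F(x,u)=\frac{\lambda|x|^\alpha}{p+1}[(1+u)^{p+1}-1]$ and $x\cdot F_x=\alpha F$, the choice $\sigma=\frac{N-4}{2N}$ gives $\frac{N+\alpha}{p+1}(I-V)\ge\frac{N-4}{2}(I-J)$, and since $\frac{N+\alpha}{p+1}-\frac{N-4}{2}=\frac{(N-4)p-N-4}{2(p+1)}<0$ exactly when $p<\frac{N+4}{N-4}$, H\"older in the measure $|x|^\alpha dx$ gives the $\lambda$-independent bound on $I$ and hence $\|u\|_{H^2_0}^2\le C_*\lambda$. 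Step 3 is also correct: once $\|u\|_\infty,\|v\|_\infty=O(\lambda)$ is available, testing the equation for $w=u-v$ with $w$ and using $|x|^\alpha\le|x|^{-4}$ on $B_1$ together with Rellich's inequality gives the contraction. The only part you have not spelled out is the bootstrap in Step 2, which is precisely the part the paper also defers to \cite[Proposition 2.2]{Ao1}; you correctly identify this as the nontrivial step (the problem being supercritical for the $|x|^\alpha$-weighted Sobolev embedding) and it is fair to cite the same reference the paper does. One small remark: you invoke the Pohozaev identity on $B_1\setminus B_\varepsilon$ and let $\varepsilon\to0$, noting the inner boundary terms vanish; this is worth keeping, since the paper's Lemma quotes \cite[Theorem~7.27]{GGS} for smooth $f$ and silently applies it to the singular weight $|x|^\alpha$ — your caveat is a useful addition rather than a deviation.
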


From  \cite[Theorem 6.2]{Rab} we know that the closure of the set of solutions $\{(\lambda, u)\}\subset \R\times X$ to \eqref{aux}  is unbounded in $(0,\infty)\times X$.     Therefore, there exists a  unbounded sequence   $(\lambda_k,u_{\lambda_k})\in(0,\infty)\times X$ of solutions to \eqref{aux}.  Then necessarily $u_{\lambda_k}(0)=\max u_{\lambda_k}\to\infty$, and by  Lemma \ref{unique-minimal},   $\lambda_k\not\to0$.  We set $$v_k(x):=\frac{u_{\lambda_k}(r_kx)}{u_{\lambda_k}(0)},\quad r_k^{4+\alpha}\lambda_k u_{\lambda_k}(0)^{p-1}:=1.$$  Then $r_k\to 0$, and $v_k$ satisfies $$\D^2v_k=|x|^\alpha (\frac{1}{u_{\lambda_k}(0)}+v_k)^p\quad\text{in } B_\frac{1}{r_k},\quad 0\leq v_k\leq 1,\quad v_k(0)=1,\quad  \D v_k\leq 0. $$ By elliptic estimates, up to a subsequence, $v_k\to v$ locally uniformly in $\R^N$, where $v$ is a non-trivial bounded positive radial solution to $$\D^2 v=|x|^\alpha v^p\quad\text{in }\R^N.$$  Now to prove the decay estimate of $v$ at infinity, we use that $v$ is monotone decreasing, and  $\D v<0$ on $\R^N$. For $r>0$, by \eqref{int-identity}, we get \begin{align*} \D v(2r) &=\D v(r)+ \int_r^{2r}\frac{1}{|S^{N-1}|t^{N-1}}\int_{B_t}|x|^\alpha v^p(x)dxdt   \\ &\geq  \D v(r)+ \int_r^{2r}\frac{1}{|S^{N-1}|t^{N-1}}\int_{B_r}|x|^\alpha v^p(x)dxdt  \\ &\geq \D v(r)+c_1 r^{\alpha+2}v^p(r) ,\end{align*} for some constant $c_1>0$. Thus $$\D v(r)+c_1 r^{\alpha+2}v^p(r) <0,$$ which leads to  \begin{align*}  v(2r) &=v(r)+ \int_r^{2r}\frac{1}{|S^{N-1}|t^{N-1}}\int_{B_t}\D v(x)dxdt  \\ &\leq v(r)+ \int_r^{2r}\frac{1}{|S^{N-1}|t^{N-1}}\int_{B_r}\D v(x)dxdt  \\ &\leq v(r)+c_2 \D v(r) r^2\\ &\leq v(r)-c_1c_2r^{4+\alpha} v^p(r), \end{align*} for some constant $c_2>0$. 

This finishes the  proof of Theorem \ref{thm-4.6}.

\bibliographystyle{alpha} 
\bibliography{mybibfile}

\medskip

\small

\begin{center}
 Department of Mathematics \\
Johns Hopkins University \\
3400 N. Charles Street \\
Baltimore, MD 21218\\
ahyder4@jhu.edu,\,\,\,sire@math.jhu.edu
\end{center}

 \end{document}